% Template for the submission to:
%   The Annals of Probability           [aop]
%   The Annals of Applied Probability   [aap]
%   The Annals of Statistics            [aos] 
%   The Annals of Applied Statistics    [aoas]
%   Stochastic Systems                  [ssy]
%
%Author: In this template, the places where you need to add information
%        (or delete line) are indicated by {???}.  Mostly the information
%        required is obvious, but some explanations are given in lines starting
%Author:
%All other lines should be ignored.  After editing, there should be
%no instances of ??? after this line.

% use option [preprint] to remove info line at bottom
% journal options: aop,aap,aos,aoas,ssy
% natbib option: authoryear
\documentclass[preprint]{imsart}
\RequirePackage{amsthm,amsmath,amsfonts,amssymb,dsfont,enumitem}
\RequirePackage[colorlinks,citecolor=blue,urlcolor=blue]{hyperref}
\RequirePackage[OT1]{fontenc}
\RequirePackage[numbers]{natbib}

% provide arXiv number if available:
%\arxiv{arXiv:0000.0000}

% put your definitions there:
\startlocaldefs
\numberwithin{equation}{section}

\theoremstyle{plain}
\newtheorem{thm}{Theorem}[section]
\newtheorem{cor}{Corollary}[section]
\newtheorem{lem}{Lemma}[section]
\newtheorem{prop}{Proposition}[section]

\theoremstyle{definition}
\newtheorem{definition}{Definition}[section]
\newtheorem{example}{Example}[section]
\newtheorem{assumption}{Assumption}[section]

\theoremstyle{remark}
\newtheorem{remark}{Remark}[section]

\DeclareMathOperator{\E}{E}
\DeclareMathOperator{\Law}{Law}
\DeclareMathOperator{\Var}{Var}
\DeclareMathOperator{\Cov}{Cov}
\DeclareMathOperator{\Prob}{P}

\DeclareMathOperator{\Unif}{Unif}

\newenvironment{spmatrix}[1]
 {\def\mysubscript{#1}\mathop\bgroup\begin{pmatrix}}
 {\end{pmatrix}\egroup_{\textstyle\text\mysubscript}}

\endlocaldefs

\begin{document}

\begin{frontmatter}

% "Title of the paper"
\title{Unifying Sequential Monte Carlo with Resampling Matrices}
\runtitle{Unifying SMC}

% indicate corresponding author with \corref{}
% \author{\fnms{John} \snm{Smith}\corref{}\ead[label=e1]{smith@foo.com}\thanksref{t1}}
% \address{line 1\\ line 2\\ printead{e1}}
% \affiliation{Some University}

\author{\fnms{Robert J.} \snm{Webber}\ead[label=e1]{rw2515@nyu.edu}\thanksref{t1}}
\address{Courant Institute of Mathematical Sciences \\ 
New York University \\ 
251 Mercer St, New York, NY 10012 \\
\phantom{E-mail:\ } \printead{e1}}
%\and
%\author{\fnms{???} \snm{???}\ead[label=e2]{???}}
%\address{\printead{e2}}
\affiliation{New York University}
\runauthor{R. J. Webber}
\thankstext{t1}{Supported by the Advanced Scientific Computing Research Program 
within the DOE Office of Science through award DE-SC0014205
and by NSF RTG award number 1547396}

\begin{abstract}
Sequential Monte Carlo (SMC) is a class of algorithms that approximate high-dimensional expectations of a Markov chain.
SMC algorithms typically include a resampling step.
%yet despite a large literature, 
There are many possible ways to resample,
but the relative advantages of different resampling schemes remain poorly understood.
Here, a theoretical framework for comparing resampling schemes is presented.
The framework uses resampling matrices to provide a simple description
for the SMC resampling step.
The framework identifies the matrix resampling scheme
that gives the lowest possible error.
The framework leads to new asymptotic error formulas
that can be used to compare different 
resampling schemes.
\end{abstract}

\begin{keyword}[class=MSC]
\kwd[Primary ]{65C05}
%\kwd{}
\kwd[; secondary ]{60J05}
\end{keyword}

\begin{keyword}
\kwd{resampling}
\kwd{Sequential Monte Carlo}
\end{keyword}

%\begin{keyword}
%\kwd{}
%\kwd{}
%\end{keyword}

\end{frontmatter}

% -----------------------------------------------------------------------------------------------------

\section{Introduction\label{Introduction}}

Sequential Monte Carlo (SMC) has a history that traces from the 1950's to the present.
The first examples of SMC were simulations of chain polymers in the 1950's \cite{hammersley1954poor,rosenbluth1955monte}.
Starting in the 1960's, SMC was used in the quantum chemistry
community to calculate the ground state energy of the Schr\"{o}dinger equation \cite{kalos1962monte, grimm1971monte}.
SMC became a standard statistical tool in the 1990's, as the
algorithm was applied to problems in Bayesian inference and signal processing \cite{doucetsequential}. 
In recent years, the algorithm continues to fascinate researchers
who are ever developing new variations of SMC algorithms
(e.g., \cite{whiteley2016role, gerber2017negative}).

SMC is a tool for evaluating expectations of the form
\begin{equation*}
\E\left[G_0\left(X_0\right) \prod_{t=0}^{T-1}G_t\left(X_{t-1}, X_t\right)f\left(X_{T-1}, X_T\right)\right]
\end{equation*}
%or
%\begin{equation}
%\frac{\E\left[G_0\left(X_0\right) \prod_{t=0}^{T-1}G_t\left(X_{t-1}, X_t\right)f\left(X_{T-1}, X_T\right)\right]}
%{\E\left[G_0\left(X_0\right) \prod_{t=0}^{T-1}G_t\left(X_{t-1}, X_t\right)\right]}
%\end{equation}
where $\left(X_{t}\right)_{t\geq0}$ is a discrete-time Markov chain
on a sequence of state space $\left(E_t\right)_{t \geq 0}$,
functions $\left(G_{t}\right)_{t\geq0}$ are nonnegative, and $f$ is real-valued.
These expectations are called
Feynman-Kac integrals, and they are notoriously difficult to
evaluate when $T$ is large
\cite{liu2008monte}.
SMC is a sampling algorithm that simulates the dynamics of the Markov chain $\left(X_t\right)_{t \geq 0}$
and provides random
approximations for Feynman-Kac integrals that become increasingly accurate as computational
effort is increased. 

SMC has a wide range of applications from Bayesian statistics to rare event sampling.
In Bayesian contexts, functions $\left(G_{t}\right)_{t\geq0}$ are typically
unnormalized likelihood ratios between prior and posterior distributions.
SMC is used to estimate statistics of the posterior distribution,
and the resulting algorithm is often called the particle filter \cite{doucetsequential}. 
In rare event sampling, on the other hand,
SMC is used to provide estimates of rare event probabilities,
and
functions $\left(G_{t}\right)_{t\geq0}$ bias a process $\left(X_{t}\right)_{t\geq0}$
to explore regions of state space that would rarely be accessed
under typical conditions \cite{hairer2014improved}.

% SMC is a flexible method.
% The dynamics, the functions G_t and f are often determined by the circumstances of
% the investigator.
% The challenging aspect of designing a SMC scheme is selecting how to resample.

%Several authors have derived sharp asymptotic error rates
%for specific SMC schemes.
%\citet{del2004feynman} studied SMC's asymptotic error
%when test functions $f$ are
%bounded, potential functions $\left(G_{t}\right)_{t\geq0}$ are bounded,
%and the multinomial resampling scheme is used. 
%\citet{chopin2004central} and \citet{douc2008limit} extended 
%Del Moral's Central Limit Theorem to include
%unbounded functions $\left(G_{t}\right)_{t\geq0}$
%and $f$, as well as residual and Bernoulli resampling schemes.
%More recently, asymptotic error rates were derived
%for an adaptive resampling schedule \citep{del2012adaptive} 
%and an SMC implementation on parallel processors
%that communicate intermittently \citep{verge2015parallel}.

% Maybe say that these are wonderful in-depth analyses,
% but the number of resampling schemes has increased far beyond the schemes that have been analyzed.

Despite the usefulness of SMC,
practitioners are burdened with the difficult
task of choosing a resampling scheme from the many options.
Past analyses have provided error formulas for a few particular resampling schemes 
(e.g., \cite{del2004feynman, chopin2004central, douc2008limit}).
However, the number of resampling schemes has increased rapidly in recent years \cite{li2015resampling},
and more theoretical analysis is required to rigorously compare schemes.
Error formulas are not available for all common resampling schemes (e.g., stratified resampling),
and there remains no consensus among experts about how best to resample.

One goal of the current paper is to describe the resampling step in a unified way
in order to facilitate analysis.
Thus, Section \ref{introducing} introduces a matrix resampling framework, 
inspired by work of \citet{hu2008basic} and \citet{whiteley2016role}.
Resampling matrices provide a simple description for a great variety of resampling schemes,
and any scheme in the matrix resampling framework is guaranteed to exhibit important convergence behavior.
In particular, Section \ref{introducing} proves unbiasedness, convergence,
and an upper bound on variance for SMC estimates 
made using matrix resampling schemes.

Another goal of the current paper is to present a unified analysis of SMC error.
Section \ref{minimize} explains how error arises within the SMC algorithm
and how error can be reduced by selecting an appropriate resampling scheme.
The scheme that gives the lowest possible resampling error is identified.
To compare the performance of resampling schemes,
Section \ref{minimize} also provides new asymptotic error bounds,
including the first such bounds for stratified resampling and stratified residual resampling.

Technical proofs are presented in an appendix,
following Section \ref{minimize} and the conclusion.

%One surprising conclusion is that particular resampling strategies 
%have the potential to minimize or nearly minimize SMC error.
%The schemes with the lowest asymptotic error are sorting schemes,
%which use one-dimensional coordinates to sort particles before resampling.
%More research is required to determine the best way to practically implement these schemes,
%but asymptotic error formulas make clear the tremendous improvement in resampling variance
%that is possible.

% Section 2 -----------------------------------------------------------------------------------------------------------------------------

\section{Matrix resampling framework}{\label{introducing}}

% Intro section that explains the title and scope of this section.

The goal of the current section is to provide a matrix resampling framework that ties together diverse SMC resampling schemes.
Section \ref{smcoverview} provides a short overview of SMC.
Section \ref{smcextension} describes the key features of the matrix resampling framework.
Section \ref{simpletheorems} presents convergence theorems
that ensure the validity of SMC estimates.
Section \ref{martingale} presents a martingale argument
to show why SMC estimates are unbiased.
%To improve the flow of the paper and emphasize theory of greatest interest to practitioners, 
%technical proofs are placed in an appendix,
%to be consulted as needed.

% --------------------------------------------------------------------------------------------------------------------

\subsection{Overview of Sequential Monte Carlo}{\label{smcoverview}}

%Defintion \ref{def:overview} provides an intuitive description of the different steps of the SMC algorithm.
%A more precise description of SMC will be provided in Definition \ref{def:general},
%after introducing examples of resampling schemes
%and showing how these examples all fit into a resampling matrix framework.

%A more thorough introduction can be found in the texts of \citet{del2004feynman, doucetsequential, liu2008monte}.
%In particular, we refer the reader to a pleasant exposition of \citet{liu2008monte},
%who introduces SMC through various examples from biology, statistics, and data assimilation.

Sequential Monte Carlo begins by sampling initial ``particles'',
and then the algorithm proceeds iteratively through three main steps:
reweighting, resampling, and mutation.
Definition \ref{def:overview} gives an overview of these steps
and the quantities that can be estimated through SMC:

\begin{definition}{\label{def:overview}}
Overview of Sequential Monte Carlo
\begin{enumerate}
\item Initialization: Independently sample
$\xi_{0}^{\left(i\right)}\sim\Law\left(X_{0}\right)$ for $1 \leq i \leq N_0$.

\item
The algorithm proceeds iteratively for $t =0, 1, 2, \ldots$. 

\begin{enumerate}
\item
Reweighting: Assign weights $w_t^{\left(i\right)}$ to each particle $\xi_t^{\left(i\right)}$ with
\begin{equation*}
\begin{cases}
w_t^{\left(i\right)} = G_t\left(\xi_t^{\left(i\right)}\right), & t = 0 \\
w_t^{\left(i\right)} = \hat{w}_{t-1}^{\left(i\right)} 
G_t\left(\hat{\xi}_{t-1}^{\left(i\right)}, \xi_t^{\left(i\right)}\right), & t > 0
\end{cases}
\end{equation*}

\item
Resampling: Replace the ensemble $\left(w_t^{\left(i\right)}, \xi_t^{\left(i\right)}\right)_{1 \leq i \leq N_t}$
with a new ensemble $\left(\hat{w}_t^{\left(j\right)}, \hat{\xi}_t^{\left(j\right)}\right)_{1 \leq j \leq N_{t+1}}$,
where each particle $\hat{\xi}_t^{\left(j\right)}$ is a copy of some particle $\xi_t^{\left(i\right)}$
and weights $\hat{w}_t^{\left(j\right)}$ are defined so that
\begin{equation*}
\frac{1}{N_0} \sum_{i=1}^{N_t} w_t^{\left(i\right)} f\left(\xi_t^{\left(i\right)} \right)
\approx
\frac{1}{N_0} \sum_{j=1}^{N_{t+1}} \hat{w}_t^{\left(j\right)} f\left(\hat{\xi}_t^{\left(j\right)}\right)
\end{equation*}
for all functions $f\colon E_t \rightarrow \mathbb{R}$.

\item Mutation: sample
$\xi_{t+1}^{\left(i\right)}\sim \Law \left(X_{t+1}|X_t=\hat{\xi}_t^{\left(i\right)}\right)$ for $1\leq i \leq N_{t+1}$.
\end{enumerate}

\item
Estimation: To estimate quantities $\E\left[\prod_{s=0}^{t-1} G_s f\right]$, use
\begin{equation*}
\frac{1}{N_0}\sum_{i=1}^{N_t} \hat{w}_{t-1}^{\left(i\right)}
f\left(\hat{\xi}_{t-1}^{\left(i\right)}, \xi_{t}^{\left(i\right)}\right)
\approx \E\left[\prod_{s=0}^{t-1} G_s f\right]
\end{equation*}
%To estimate ratios $\E\left[\prod_{s=0}^{t-1} G_s f \right] \slash \E\left[\prod_{s=0}^{t-1} G_s \right]$, use
%\begin{equation}
%\frac{\sum_{i=1}^{N_t} \hat{w}_{t-1}^{\left(i\right)}
%f\left(\hat{\xi}_{t-1}^{\left(i\right)}, \xi_t^{\left(i\right)}\right)}
%{\sum_{i=1}^{N_t} \hat{w}_{t-1}^{\left(i\right)}}
%\approx 
%\frac{\E\left[\prod_{s=0}^{t-1} G_s f \right]}{\E\left[\prod_{s=0}^{t-1} G_s \right]}
%\end{equation}
\end{enumerate}
\end{definition}

% ====
% TO DO 
% ====
%(Include a three panel illustration with reweighting 
%(particle paths and then weights given by 1 for x > 0 and 0 otherwise; 
%splitting illustrated by the same path with some X’s and some 
%weights listed twice and relabeled with 1/2’s next to them; 
%lastly mutation illustrated by extending the path)
% ====
% TO DO 
% ====

%At times $t = 1, 2, \ldots$, the SMC algorithm can be used to estimate
%high-dimensional expectations involving the Markov chain $\left(X_t\right)_{t \geq 0}$.
%One type of estimate is
%\begin{equation}
%\frac{1}{N_0}\sum_{i=1}^{N_t} \hat{w}_{t-1}^{\left(i\right)}
%f\left(\hat{\xi}_{t-1}^{\left(i\right)}, \xi_{t}^{\left(i\right)}\right)
%\approx \E\left[\prod_{s=0}^{t-1} G_s f\right]
%\end{equation}
%Another type of estimate is formed by taking ratios, namely,
%\begin{equation}
%\frac{\sum_{i=1}^{N_t} \hat{w}_{t-1}^{\left(i\right)}
%f\left(\hat{\xi}_{t-1}^{\left(i\right)}, \xi_t^{\left(i\right)}\right)}
%{\sum_{i=1}^{N_t} \hat{w}_{t-1}^{\left(i\right)}}
%\approx 
%\frac{\E\left[\prod_{s=0}^{t-1} G_s f \right]}{\E\left[\prod_{s=0}^{t-1} G_s \right]}
%\end{equation}
For notational simplicity, in expectations involving the Markov Chain $\left(X_t\right)_{t \geq 0}$,
the arguments of functions will often be omitted.
For example,
$\E\left[\prod_{s=0}^{t-1} G_s f\right]$
denotes $\E\left[G_0\left(X_0\right) \prod_{s=1}^{t-1} G_s\left(X_{s-1},X_s\right) f\left(X_{t-1},X_t\right)\right]$.

%In recent years, SMC has become very popular in scientific computations \citep{doucetsequential}.
%One reason for the popularity of SMC is that
%it requires no special tools,
%beyond the ability to stop and restart a stochastic model.
%Resampling schemes can often be programmed easily
%and carried out very rapidly on a computer.
%In many scientific computations, therefore,
%SMC schemes have the same practical and computational cost as
%running direct simulations with the same number of ensemble members.
%
%There are, however, a few situations where SMC can demand 
%additional computation resources compared to direct simulation.
%In some parallel computing contexts, communication between processors is expensive,
%and it forms a computational bottleneck.
%In such settings, careful attention must be paid to designing resampling schemes
%that minimize inter-processor communication costs \citep{li2015resampling}. 

While the reweighting and mutation steps are straightforward,
there are many different ways to carry out the resampling step.
Outlined below are examples of resampling methods:

\begin{example}[Sequential importance sampling]
The simplest resampling scheme, sequential importance sampling \cite{hammersley1954poor, rosenbluth1955monte},
leaves the ensemble of particles and weights completely unchanged:
\begin{equation*}
\left(\hat{w}_t^{\left(j\right)}, \hat{\xi}_t^{\left(j\right)}\right)_{1 \leq j \leq N_{t+1}}
= \left(w_t^{\left(i\right)}, \xi_t^{\left(i\right)} \right)_{1 \leq i \leq N_t}
\end{equation*}
\end{example}

In sequential importance sampling, weights
$\hat{w}_t^{\left(j\right)}$ 
are multiples of many functions $\left(G_t\right)_{t \geq 0}$:
\begin{equation*}
\hat{w}_t^{\left(j\right)} 
= \hat{w}_{t-1}^{\left(j\right)} G_t\left(\hat{\xi}_{t-1}^{\left(j\right)}, \xi_t^{\left(j\right)}\right) 
= \hat{w}_{t-2}^{\left(j\right)} G_{t-1}\left(\hat{\xi}_{t-2}^{\left(j\right)}, \xi_{t-1}^{\left(j\right)}\right) 
G_t\left(\hat{\xi}_{t-1}^{\left(j\right)}, \xi_t^{\left(j\right)}\right) 
= \cdots
\end{equation*}
Consequently, some weights $\hat{w}_t^{\left(j\right)}$
can be very large, while other weights can be very small.
The imbalance in weights can potentially contribute variance to the estimates
$\frac{1}{N_0}\sum_{i=1}^{N_t} \hat{w}_{t-1}^{\left(i\right)}
f\left(\hat{\xi}_{t-1}^{\left(i\right)}, \xi_{t}^{\left(i\right)}\right)$,
because the single particle with the highest weight can dominate all the others.

Alternatives to sequential importance sampling,
which alleviate the imbalance in weights,
include multinomial resampling and Bernoulli resampling.
\begin{example}[Multinomial resampling]
In multinomial resampling \cite{holland1975adaptation},
updated particles $\left(\hat{\xi}_t^{\left(j\right)}\right)_{1 \leq i \leq N_0}$ are independently sampled with common distribution
\begin{equation*}
\hat{\xi}_t^{\left(j\right)} \sim
\frac{\sum_{i=1}^{N_t} w_t^{\left(i\right)} \delta\left(\xi_t^{\left(i\right)}\right)}
{\sum_{i=1}^{N_t} w_t^{\left(i\right)}}
\end{equation*}
and each updated particle is assigned an updated weight
$\hat{w}_t^{\left(j\right)} = \overline{w}_t = \frac{1}{N_0}\sum_{i=1}^{N_t} w_t^{\left(i\right)}$.
\end{example}
\begin{example}[Bernoulli resampling]
In Bernoulli resampling \cite{kalos1962monte}, 
each of the original particles $\xi_t^{\left(i\right)}$
is replicated $N_t^{\left(i\right)}$ times, where the numbers $N_t^{\left(i\right)}$ are independent random variables with
\begin{equation*}
\begin{cases}
N_t^{\left(i\right)} = \left\lfloor \frac{w_t^{\left(i\right)}}{\overline{w}_t} \right\rfloor + 1,
& \text{with probability} \left\{ \frac{w_t^{\left(i\right)}}{\overline{w}_t} \right\}
\vspace{.1cm} \\
N_t^{\left(i\right)} = \left\lfloor \frac{w_t^{\left(i\right)}}{\overline{w}_t} \right\rfloor,
& \text{otherwise}
\end{cases}
\end{equation*}
Here, the floor function $\left\lfloor \cdot \right\rfloor$ is defined by
$\left\lfloor x\right\rfloor =\max\left\{ z\in\mathbb{Z}:z\leq x\right\}$,
the remainder function $\left\{ \cdot \right\}$ is defined by
$\left\{ x\right\} =x-\left\lfloor x\right\rfloor$,
and $\overline{w}_t= \frac{1}{N_0} \sum_{i=1}^{N_t} w_t^{\left(i\right)}$ is the average of the weights.
After replication, each updated particle $\hat{\xi}_t^{\left(j\right)}$ 
is assigned an updated weight $\hat{w}_t^{\left(j\right)} =\overline{w}_t$.
\end{example}

% --------------------------------------------------------------------------------------------------------------------

\subsection{Extending the matrix resampling framework}{\label{smcextension}}

Sequential importance sampling and multinomial resampling are both
\emph{matrix resampling schemes}.
First introduced by \citet{hu2008basic} and \citet{whiteley2016role},
matrix resampling schemes involve a resampling step described by a nonnegative matrix $W_t$
with dimensions $N_t \times N_{t+1}$.
The properties of this matrix are:
\begin{itemize}
\item
The $i$th row sum equals the weight $w_t^{\left(i\right)}$ for $1 \leq i \leq N_t$.
\item
The $j$th column sum equals the updated weight $\hat{w}_t^{\left(j\right)}$ for $1 \leq j \leq N_{t+1}$.
\item
Each updated particle $\hat{\xi}_t^{\left(j\right)}$ is independently drawn from a distribution
determined by the $j$th column of the resampling matrix:
\begin{equation*}
\hat{\xi}_t^{\left(j\right)} \sim \frac{\sum_{i=1}^{N_t} w_t^{\left(i, j\right)} \delta\left(\xi_t^{\left(i\right)}\right)}
{\sum_{i=1}^{N_t} w_t^{\left(i, j\right)}}
\end{equation*}
\end{itemize}

Resampling schemes can be divided into \emph{fixed population} resampling schemes,
where particle numbers $\left(N_t\right)_{t \geq 0}$ are deterministic,
and \emph{random population} resampling schemes, 
where the number of particles 
$\left(N_t\right)_{t \geq 0}$ is random.
While the matrix resampling framework is useful for describing fixed population schemes,
it is necessary to extend the framework 
further in order to describe random population resampling schemes.

This section presents a new extension to the matrix resampling framework 
to random population schemes that satisfy an upper
bound on the maximum possible number of particles $N_t$.
In these schemes, $N_t$ can be bounded by $C_t N_0$ for each $t \geq 0$,
where $\left(C_t\right)_{t \geq 0}$ is a deterministic series of constants.
This assumption is often satisfied for the random population schemes used in practice.
For example, in Bernoulli resampling,
the random numbers $N_t$ satisfy an upper bound $N_t \leq N_0 \left(t + 1\right)$
and cannot grow in an uncontrolled way,
because
\begin{equation*}
N_{t+1} = \sum_{i=1}^{N_t} N_t^{\left(i\right)}
= \sum_{i=1}^{N_t}
\left(\left\lfloor \frac{w_t^{\left(i\right)}}{\overline{w}_t} \right\rfloor 
+ 1\right)
\leq \sum_{i=1}^{N_t} \left(\frac{w_t^{\left(i\right)}}{\overline{w}_t} + 1\right)
= N_0 + N_t
\end{equation*}

The extended matrix resampling framework 
differs from the standard matrix resampling framework by including a ``coffin state'' $c$.
The coffin state is an element of state space
that particles $\hat{\xi}_t^{\left(j\right)}$ can potentially occupy, but
particles in the coffin state do not affect any SMC estimates.
By including a coffin state,
the extended matrix resampling framework
is able to reinterpret many random population schemes
as schemes where the number of particles $\left(N_t\right)_{t \geq 0}$ is deterministic 
but the number of coffin state particles is random.

In the extended matrix resampling framework, 
the Markov chain $X_t$ is allowed to take values in the extended state space $E_t \cup \left\{c\right\}$.
Transitions from the coffin state are described by $\Prob \left\{X_{t + 1} = c \rvert X_t = c \right\} = 1$.
Functions defined on $E_t$ or $E_{t-1} \times E_t$ are extended to take values $f\left(c\right) = 0$ or $f\left(c,c\right) = 0$.
As seen in the definition below, 
the extended matrix resampling framework
includes a row in each resampling matrix $W_t$
governing transitions into the coffin state $c$:

\begin{definition}{\label{def:general}}
Extended matrix resampling framework
\begin{enumerate}

\item Initialization: Independently sample
$\xi_{0}^{\left(i\right)}\sim\Law\left(X_{0}\right)$ for $1 \leq i \leq N_0$.

\item
The algorithm proceeds iteratively for $t =0, 1, 2, \ldots$. 

\begin{enumerate}
\item
Reweighting: Assign weights $w_t^{\left(i\right)}$ to each particle $\xi_t^{\left(i\right)}$ with
\begin{equation*}
\begin{cases}
w_t^{\left(i\right)} = G_t\left(\xi_t^{\left(i\right)}\right), & t = 0 \\
w_t^{\left(i\right)} = \hat{w}_{t-1}^{\left(i\right)} 
G_t\left(\hat{\xi}_{t-1}^{\left(i\right)}, \xi_t^{\left(i\right)}\right), & t > 0
\end{cases}
\end{equation*}

\item Resampling: Select a nonnegative matrix $W_t$ with dimensions $\left(N_t + 1\right) \times N_{t+1}$ 
and row sums 
$\sum_{j=1}^{N_{t+1}} w_t^{\left(i,j\right)} = w_t^{\left(i\right)}$ for $1 \leq i \leq N_t$.
Independently, for $1 \leq j \leq N_{t+1}$, select $\hat{\xi}_t^{\left(j\right)}$ 
from the distribution
\begin{equation*}
\hat{\xi}_t^{\left(j\right)} \sim \frac{\sum_{i=1}^{N_t} w_t^{\left(i, j\right)} \delta\left(\xi_t^{\left(i\right)}\right)
+ w_t^{\left(N_t + 1, j\right)} \delta\left(c\right)}
{\sum_{i=1}^{N_t + 1} w_t^{\left(i, j\right)}}
\end{equation*}
Define the $\hat{w}_t^{\left(j\right)}$ by the column sum
$\hat{w}_t^{\left(j\right)} = \sum_{i=1}^{N_t + 1} w_t^{\left(i, j\right)}$.
\item Mutation: sample
$\xi_{t+1}^{\left(i\right)}\sim \Law \left(X_{t+1}|X_t=\hat{\xi}_t^{\left(i\right)}\right)$ for $1\leq i \leq N_{t+1}$.
\end{enumerate}

\item
Estimation: To estimate quantities $\E\left[\prod_{s=0}^{t-1} G_s f\right]$, use
\begin{equation*}
\frac{1}{N_0}\sum_{i=1}^{N_t} \hat{w}_{t-1}^{\left(i\right)}
f\left(\hat{\xi}_{t-1}^{\left(i\right)}, \xi_{t}^{\left(i\right)}\right)
\approx \E\left[\prod_{s=0}^{t-1} G_s f\right]
\end{equation*}
%To estimate ratios $\E\left[\prod_{s=0}^{t-1} G_s f \right] \slash \E\left[\prod_{s=0}^{t-1} G_s \right]$, use
%\begin{equation}
%\frac{\sum_{i=1}^{N_t} \hat{w}_{t-1}^{\left(i\right)}
%f\left(\hat{\xi}_{t-1}^{\left(i\right)}, \xi_t^{\left(i\right)}\right)}
%{\sum_{i=1}^{N_t} \hat{w}_{t-1}^{\left(i\right)} \mathds{1}\left\{\xi_t^{\left(i\right)} \neq c\right\}}
%\approx 
%\frac{\E\left[\prod_{s=0}^{t-1} G_s f \right]}{\E\left[\prod_{s=0}^{t-1} G_s \right]}
%\end{equation}
\end{enumerate}
\end{definition}

The extended matrix resampling framework
encompasses a variety of resampling schemes.
For example, Figure \ref{figure1} presents resampling matrices $W_t$
that correspond to sequential importance sampling, multinomial resampling, and Bernoulli resampling.
In the extended matrix resampling framework,
the choice of which matrix $W_t$ to use can be made adaptively,
incorporating any information, such as the values of particles $\left(\xi_t^{\left(i\right)}\right)_{1 \leq i \leq N_t}$
and their weights $\left(w_t^{\left(i\right)}\right)_{1 \leq i \leq N_t}$.
Only the numbers $\left(N_t\right)_{t \geq 0}$ must be fixed in advance of running the SMC algorithm.

\begingroup\abovedisplayskip=0pt\belowdisplayskip=0pt
\begin{figure}[!htbp]
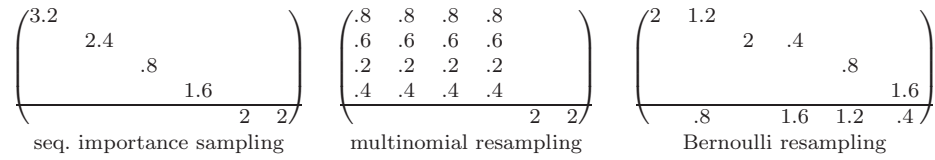

\caption{Examples of resampling matrices $W_0$ when $N_0 = 4$, $N_1 = 6$, and
particles have weights $w_0^{\left(1\right)}=3.2$, $w_0^{\left(2\right)}=2.4$,
$w_0^{\left(3\right)}=.8$, and $w_0^{\left(4\right)}=1.6$.
A horizontal line separates the coffin state $c$.}
\label{figure1}
\begin{minipage}{.32 \textwidth}
\begin{equation*}
\begin{spmatrix}{seq. importance sampling}
3.2 & ~ & ~ & ~ & ~ & ~ \\
~ & 2.4 & ~ & ~ & ~ & ~ \\
~ & ~ & .8 & ~ & ~ & ~ \\
~ & ~ & ~ & 1.6 & ~ & ~\\
\hline
~ & ~ & ~ & ~ & 2 & 2 \\
\end{spmatrix}
\end{equation*}
\end{minipage}
\hfill{}
\begin{minipage}{.32 \textwidth}
\begin{equation*}
\begin{spmatrix}{multinomial resampling}
.8 & .8 & .8 & .8 & ~ & ~ \\
.6 & .6 & .6 & .6 & ~ & ~ \\
.2 & .2 & .2 & .2 & ~ & ~ \\
.4 & .4 & .4 & .4 & ~ & ~ \\
\hline
~ & ~ & ~ & ~ & 2 & 2 \\
\end{spmatrix}
\end{equation*}
\end{minipage}
\hfill{}
\begin{minipage}{.32 \textwidth}
\begin{equation*}
\begin{spmatrix}{Bernoulli resampling}
2 & 1.2 & ~ & ~ & ~ & ~ \\
~ & ~ & 2 & .4 & ~ & ~ \\
~ & ~ & ~ & ~ & .8 & ~\\
~ & ~ & ~ & ~ & ~ & 1.6 \\
\hline
~ & .8 & ~ & 1.6 & 1.2 & .4 \\
\end{spmatrix}
\end{equation*}
\end{minipage}
\end{figure}
\endgroup

% --------------------------------------------------------------------------------------------------------------------

\subsection{Unbiasedness, convergence, and variance}{\label{simpletheorems}}

The matrix resampling framework leads to a series of 
powerful results on the unbiasedness, convergence, and variance of SMC estimates.
%\begin{equation}
%\label{equation1}
%\frac{1}{N_0}\sum_{i=1}^{N_T} \hat{w}_{T-1}^{\left(i\right)}
%f\left(\hat{\xi}_{T-1}^{\left(i\right)}, \xi_T^{\left(i\right)}\right)
%\approx \E\left[\prod_{t=0}^{T-1} G_t f\right]
%\end{equation}
While versions of these theorems were proved previously
\cite{del2004feynman, douc2008limit, whiteley2016role},
this section presents results that hold more broadly
and include all schemes in the matrix resampling framework.

The first of the key theorems that govern the validity of SMC estimates
ensures that estimates
%$\frac{1}{N_0}\sum_{i=1}^{N_T} \hat{w}_{T-1}^{\left(i\right)}
%f\left(\hat{\xi}_{T-1}^{\left(i\right)}, \xi_T^{\left(i\right)}\right)$ 
are unbiased:

\begin{thm}{\label{earlyunbiased}}
If $\E\left|\prod_{t=0}^{T-1} G_t f \right| < \infty$, then SMC estimates are unbiased:
\begin{equation*}
\E\left[\frac{1}{N_{0}}\sum_{i=1}^{N_{T}}\hat{w}_{T-1}^{\left(j\right)}
f\left(\hat{\xi}_{T-1}^{\left(i\right)},\xi_{T}^{\left(i\right)}\right)\right]
=\E\left[\prod_{t=0}^{T-1} G_t f \right]
\end{equation*}
\end{thm}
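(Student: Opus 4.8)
The plan is to establish the identity by a martingale-type argument in which the test function is pushed backward through the algorithm and each of the three sub-steps is shown to preserve expectation. Fix the horizon $T$ and $f\colon E_{T-1}\times E_T\to\mathbb R$. I would first handle the case $f\geq 0$: then every quantity below lies in $[0,\infty]$, Tonelli's theorem legitimizes all rearrangements and conditional expectations, and the identity holds (possibly as $\infty=\infty$). The general case then follows by writing $f=f^+-f^-$; since the $G_t$ are nonnegative, $\E|\prod_{t=0}^{T-1}G_t f|<\infty$ forces $\E[\prod_{t=0}^{T-1}G_t f^{\pm}]<\infty$, which also bounds the positive and negative parts of the estimator, so the two nonnegative identities may be subtracted.

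Introduce the running estimators $R_t(g):=\frac1{N_0}\sum_{i=1}^{N_t}w_t^{(i)}g(\xi_t^{(i)})$, formed just after reweighting at time $t$, and $\hat R_t(g):=\frac1{N_0}\sum_{j=1}^{N_{t+1}}\hat w_t^{(j)}g(\hat\xi_t^{(j)})$, formed just after resampling at time $t$, where $g$ is always extended by $g(c)=0$. The argument rests on three one-step identities. \emph{Initialization}: since the $\xi_0^{(i)}$ are i.i.d.\ $\sim\Law(X_0)$ and $w_0^{(i)}=G_0(\xi_0^{(i)})$, one has $\E[R_0(g)]=\E[G_0(X_0)g(X_0)]$. \emph{Resampling preserves expectation}: conditioning on the information just after reweighting at time $t$ together with the (possibly randomized, adaptively chosen) matrix $W_t$, each $\hat\xi_t^{(j)}$ has the distribution in Definition~\ref{def:general}, so $\E[\hat w_t^{(j)}g(\hat\xi_t^{(j)})\mid\cdots]=\sum_{i=1}^{N_t}w_t^{(i,j)}g(\xi_t^{(i)})$, the coffin term being killed by $g(c)=0$; summing over $j$ and invoking the row-sum constraint $\sum_j w_t^{(i,j)}=w_t^{(i)}$ gives $\E[\hat R_t(g)\mid\mathcal F_t^{\mathrm{rw}}]=R_t(g)$. \emph{Mutation and reweighting}: given the information just after resampling at time $t$, the mutations $\xi_{t+1}^{(i)}\sim\Law(X_{t+1}\mid X_t=\hat\xi_t^{(i)})$ are conditionally independent, and because $w_{t+1}^{(i)}=\hat w_t^{(i)}G_{t+1}(\hat\xi_t^{(i)},\xi_{t+1}^{(i)})$ one gets $\E[R_{t+1}(\phi)\mid\mathcal F_t^{\mathrm{rs}}]=\hat R_t(Q_{t+1}\phi)$, where $(Q_{t+1}\phi)(x):=\E[G_{t+1}(x,X_{t+1})\phi(X_{t+1})\mid X_t=x]$; the same computation applied to the two-argument $f$ gives $\E[\frac1{N_0}\sum_{i=1}^{N_T}\hat w_{T-1}^{(i)}f(\hat\xi_{T-1}^{(i)},\xi_T^{(i)})\mid\mathcal F_{T-1}^{\mathrm{rs}}]=\hat R_{T-1}(\bar f)$ with $\bar f(x):=\E[f(x,X_T)\mid X_{T-1}=x]$.

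Chaining these through the tower property, the expectation of the SMC estimate equals $\E[\hat R_{T-1}(\bar f)]=\E[R_{T-1}(\bar f)]=\E[\hat R_{T-2}(Q_{T-1}\bar f)]=\cdots=\E[R_0(Q_1\cdots Q_{T-1}\bar f)]=\E[G_0(X_0)(Q_1\cdots Q_{T-1}\bar f)(X_0)]$, and repeated use of the Markov property of $(X_t)_{t\geq0}$ identifies the last expression with $\E[G_0(X_0)\prod_{s=1}^{T-1}G_s(X_{s-1},X_s)f(X_{T-1},X_T)]=\E[\prod_{t=0}^{T-1}G_t f]$. Equivalently, the quantities $R_s(Q_{s+1}\cdots Q_{T-1}\bar f)$ and $\hat R_s(Q_{s+1}\cdots Q_{T-1}\bar f)$, listed in algorithmic order, form a martingale with constant expectation equal to the Feynman-Kac integral; this is the martingale formulation of the argument.

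The three one-step identities are routine interchanges of summation and integration, and the structural point that makes them work is that the coffin-state convention $g(c)=0$ together with the exact row-sum constraint on $W_t$ renders the resampling identity an equality rather than an approximation. The part needing care --- and the main obstacle --- is the bookkeeping that makes the tower-property chain rigorous: propagating integrability so that every conditional expectation and every use of the Markov property is well defined (hence the reduction to $f\geq0$), and handling the adaptivity and possible auxiliary randomness of $W_t$, which is why the resampling identity is first stated conditionally on $W_t$ and only then averaged. One also uses that in the extended framework the particle counts $(N_t)_{t\geq0}$ are deterministic and finite, so every sum above is a genuine finite sum.
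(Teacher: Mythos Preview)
Your proof is correct and is essentially the same martingale argument the paper gives as Theorem~\ref{unbiased}: your backward iterates $R_s(Q_{s+1}\cdots Q_{T-1}\bar f)$ and $\hat R_s(Q_{s+1}\cdots Q_{T-1}\bar f)$ are precisely the paper's $M_s$ and $M_{s+\frac12}$ once one recognizes $h_s=Q_{s+1}\cdots Q_{T-1}\bar f$, and your three one-step identities are exactly the computations in \eqref{startpart1}--\eqref{endpart2}. Your reduction to $f\ge 0$ via Tonelli and the explicit conditioning on $W_t$ are a bit more careful about integrability and adaptivity than the paper's write-up, but the route is the same.
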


Theorem \ref{earlyunbiased} is quite general and holds without any additional assumptions.
In contrast, Theorems \ref{weak} and \ref{bounded} will require a mild assumption on the
numbers $\left(N_t\right)_{t \geq 0}$ and on the resampling matrices 
$\left(W_t\right)_{t \geq 0}$:
\begin{assumption}{\label{assumption1}}
There exist absolute constants $\left(C_t\right)_{t \geq 0}$
such that 
$\frac{N_t}{N_0} \leq C_t$ and 
$\max_{1\leq i\leq N_{t+1}}\hat{w}_{t}^{\left(i\right)} \leq C_t \max_{1\leq j\leq N_t}w_{t}^{\left(j\right)}$.
\end{assumption}
Assumption \ref{assumption1} guarantees
that the number of particles does not grow too high
and also that the maximum weight does not grow too high during resampling.
%This assumption is expected to hold for any practical SMC scheme.
%One of the main goals of SMC is to control computational effort, 
%which requires controling the maximum number of particles.
%Additionally, the typical purpose of resampling is typically to reduce variation in weights,
%not increase their variation.
This assumption is satisfied for all the schemes presented in the current paper,
taking $C_t = 1$ for fixed population schemes
and $C_t = t + 1$ for random population schemes.
% because
%$\max_{1 \leq i \leq N_{t+1}} \hat{w}_t^{\left(i\right)} \leq \max\left\{\hat{w}_t, \max_{1 \leq j \leq N_t}w_t^{\left(j\right)}\right\}$ and  
%\begin{equation}
%\hat{w}_t
%= \frac{1}{N_0}\sum_{j=1}^{N_t} w_t^{\left(j\right)}
%\leq \frac{N_t}{N_0} \max_{1\leq j\leq N_t}w_{t}^{\left(j\right)}
%\leq \left(t+1\right) \max_{1\leq j\leq N_t}w_{t}^{\left(j\right)}
%\end{equation}

The next result is a widely useful convergence theorem for SMC estimates:

\begin{thm}{\label{weak}}
If $\E\left[\prod_{s=0}^{t} G_s \right]<\infty$ for $0 \leq t \leq T-1$
and $\E\left|\prod_{t=0}^{T-1} G_t f \right|<\infty$, then
\begin{align*}
\label{weakresult}
& \frac{1}{N_{0}}\sum_{i=1}^{N_{T}}
\hat{w}_{T-1}^{\left(i\right)}f\left(\hat{\xi}_{T-1}^{\left(i\right)},\xi_{T}^{\left(i\right)}\right)
\stackrel{\Prob}{\rightarrow}
\E\left[\prod_{t=0}^{T-1} G_t f \right]
& \text{as } N_0 \rightarrow \infty
\end{align*}
\end{thm}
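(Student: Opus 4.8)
The plan is to establish convergence in probability by an inductive argument over the time steps $t = 0, 1, \ldots, T-1$, combining a law-of-large-numbers step for mutation with a consistency step for resampling. The natural inductive hypothesis is that for every bounded measurable test function $g$ on $E_t$ (extended by $g(c) = 0$), one has $\frac{1}{N_0}\sum_{i=1}^{N_t} w_t^{(i)} g(\xi_t^{(i)}) \stackrel{\Prob}{\rightarrow} \E\left[\prod_{s=0}^{t-1} G_s\, g\right]$ as $N_0 \to \infty$, and likewise for the post-resampling ensemble $\frac{1}{N_0}\sum_{j=1}^{N_{t+1}} \hat{w}_t^{(j)} g(\hat{\xi}_t^{(j)})$. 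I would first prove the claim for bounded $g$ and then remove boundedness by a truncation argument at the very end, using the integrability hypotheses $\E\left[\prod_{s=0}^t G_s\right] < \infty$ and $\E\left|\prod_{t=0}^{T-1} G_t f\right| < \infty$ to control the tails.

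The base case $t = 0$ is the ordinary weak law of large numbers: the $\xi_0^{(i)}$ are i.i.d.\ from $\Law(X_0)$, so $\frac{1}{N_0}\sum_i G_0(\xi_0^{(i)}) g(\xi_0^{(i)}) \to \E[G_0\, g]$. For the inductive step I would split into the resampling half-step and the mutation half-step. For resampling, I condition on the ensemble $(w_t^{(i)}, \xi_t^{(i)})_i$ and on the chosen matrix $W_t$; since $\hat{\xi}_t^{(j)}$ is drawn from the column-$j$ distribution and $\hat{w}_t^{(j)}$ is the column sum, the key identity is that $\E\bigl[\hat{w}_t^{(j)} g(\hat{\xi}_t^{(j)}) \mid \mathcal{F}\bigr] = \sum_{i=1}^{N_t} w_t^{(i,j)} g(\xi_t^{(i)})$ (with the coffin term contributing zero because $g(c) = 0$), so summing over $j$ and dividing by $N_0$ gives conditional expectation exactly $\frac{1}{N_0}\sum_i w_t^{(i)} g(\xi_t^{(i)})$, which converges by the inductive hypothesis. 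To upgrade from convergence of the conditional mean to convergence in probability of the random sum, I would bound the conditional variance: the $\hat{\xi}_t^{(j)}$ are conditionally independent across $j$, so $\Var\bigl(\frac{1}{N_0}\sum_j \hat{w}_t^{(j)} g(\hat{\xi}_t^{(j)}) \mid \mathcal{F}\bigr) \leq \frac{1}{N_0^2} \sum_j (\hat{w}_t^{(j)})^2 \|g\|_\infty^2 \leq \frac{\|g\|_\infty^2}{N_0^2} \left(\max_j \hat{w}_t^{(j)}\right) \sum_j \hat{w}_t^{(j)}$, and here Assumption \ref{assumption1} is exactly what is needed: it forces $\max_j \hat{w}_t^{(j)}$ to stay comparable to $\max_i w_t^{(i)}$ and $N_{t+1}/N_0 \leq C_{t+1}$, while $\frac{1}{N_0}\sum_j \hat{w}_t^{(j)}$ is bounded in probability by the inductive hypothesis (take $g \equiv 1$); a separate small argument shows $\max_i w_t^{(i)}/N_0 \to 0$ in probability, which together make the conditional variance vanish. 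For the mutation half-step the same scheme applies: conditionally on the resampled ensemble, the $\xi_{t+1}^{(i)}$ are independent with $\E[g(\xi_{t+1}^{(i)}) \mid \cdot] = (K_{t+1} g)(\hat{\xi}_t^{(i)})$ where $K_{t+1}$ is the Markov kernel, the conditional mean of $\frac{1}{N_0}\sum_i \hat{w}_t^{(i)} G_{t+1}(\hat{\xi}_t^{(i)}, \xi_{t+1}^{(i)}) g(\xi_{t+1}^{(i)})$ matches a test-function expectation handled by the inductive hypothesis applied to the bounded function $x \mapsto \int G_{t+1}(x,y) g(y)\, K_{t+1}(x, dy)$, and the conditional variance is again killed by Assumption \ref{assumption1} provided $G_{t+1}(\cdot,\cdot) g(\cdot)$ is bounded.

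The main obstacle is the unboundedness of the weights $\hat w_t^{(j)}$ and of the functions $G_s$ and $f$: the variance-vanishing estimate above only works cleanly when everything is bounded, yet the theorem only assumes integrability. I would handle this by a two-sided truncation. First run the whole induction with $G_s$ replaced by $G_s \wedge M$ and $f$ replaced by $(f \wedge M) \vee (-M)$, for which all the boundedness requirements hold and the argument gives convergence to $\E\left[\prod_s (G_s \wedge M)\, ((f\wedge M)\vee(-M))\right]$. Then I would let $M \to \infty$: on the limit side, dominated convergence using $\E\left|\prod_{t=0}^{T-1}G_t f\right| < \infty$ handles the passage; on the estimator side, I would need a uniform-in-$N_0$ bound showing the difference between the truncated and untruncated SMC sums is small in probability uniformly for large $M$, which follows from the unbiasedness Theorem \ref{earlyunbiased} applied to the nonnegative function $\prod_s G_s |f| \, \mathbf{1}[\text{some } G_s > M \text{ or } |f| > M]$ (or a similar dominating quantity), whose expectation is small by the integrability hypothesis and which bounds the expected absolute discrepancy. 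Making this truncation interchange fully rigorous — particularly coupling the truncated and untruncated runs so the discrepancy is controlled by a single expectation amenable to Theorem \ref{earlyunbiased} — is the delicate part; everything before it is a fairly standard conditional-variance induction.
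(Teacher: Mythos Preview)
Your strategy diverges from the paper's, and the divergence is exactly where the gap lies. The paper does \emph{not} run a conditional-variance induction over bounded test functions; it writes the SMC estimate as a martingale $M_t^{(i)}$ indexed by particle, then applies a first-moment weak law for martingale arrays (Lemma~\ref{condconv}, due to Hall--Heyde/McLeish and used by Douc--Moulines): if $\sum_{j}\E[|Y_{nj}|\mathds{1}\{|Y_{nj}|>C\}\mid\mathcal G_{n,j-1}]\stackrel{\Prob}{\to}0$ and $\sum_j\E[|Y_{nj}|\mid\mathcal G_{n,j-1}]$ is tight, then $\sum_j(Y_{nj}-\E[Y_{nj}\mid\mathcal G_{n,j-1}])\stackrel{\Prob}{\to}0$. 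The Lindeberg-type hypothesis is checked by an induction (Lemma~\ref{weaklemma}) whose inductive statement is precisely that truncated-mass condition; as a byproduct it yields $\max_i w_t^{(i)}/N_0\stackrel{\Prob}{\to}0$, which is what powers the next step. No truncation of the $G_s$ is ever performed, and no second moments are used.

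Your plan has two real problems. First, the bounded-$g$ induction does not propagate through the mutation step: to pass from time $t$ to $t+1$ you must apply the inductive hypothesis to $x\mapsto\E[G_{t+1}(x,X_{t+1})\,g(X_{t+1})\mid X_t=x]$, and this function is not bounded when $G_{t+1}$ is merely integrable. Likewise your conditional-variance bound at the mutation step needs $G_{t+1}g$ bounded, which is not assumed. Second, the repair you propose---run a parallel SMC with each $G_s$ replaced by $G_s\wedge M$ and control the discrepancy via Theorem~\ref{earlyunbiased}---does not work: the resampling matrix $W_t$ is allowed to depend adaptively on the weights, so truncating $G_s$ alters every subsequent $W_t$, every resampled particle, and every descendant. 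The truncated and untruncated runs live on different particle clouds, and there is no single nonnegative function whose (unbiased) SMC estimate under either run dominates the difference of the two estimates. Theorem~\ref{earlyunbiased} gives you an expectation for one scheme at a time, not a coupling. You flag this as ``the delicate part,'' but it is not merely delicate: as stated the argument is circular, because any coupling tight enough to compare the two runs would already require the convergence you are trying to prove. The paper's first-moment martingale lemma is what lets you avoid this trap.
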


In Theorem \ref{weak}, it is assumed the SMC algorithm is well-defined on a
probability space $\left(\Omega, \mathcal{F}, \Prob\right)$ for any
number of starting particles $N_0 = 1, 2, \ldots$.
As $N_0 \rightarrow \infty$, Theorem \ref{weak} establishes that SMC 
estimates converge in probabilty to the correct result.
%Some authors (e.g., \citealt{crisan2001particle}) prefer the notation
%$\left(\hat{\xi}_{T-1}^{\left(N_0, j\right)}, \xi_T^{\left(N_0, j \right)}\right)_{1 \leq j \leq N_T^{\left(N_0\right)}}$
%to emphasize the fact that ensembles have different distributions,
%depending on the number of starting particles $N_0$.
%However, here $\left(\hat{\xi}_{T-1}^{\left(j\right)}, \xi_T^{\left(j \right)}\right)_{1 \leq j \leq N_T}$
%is used for notational simplicity.

Another key convergence result is a simple upper bound on the variance of SMC estimates.
%While exact variance expressions can be found for a few particular schemes \citep{glasserman1999multilevel, lee2018variance},
%these can be quite complicated and are not as general.
The upper bound leads to a clear interpretation that SMC estimates
have a $\frac{1}{\sqrt{N_0}}$ error rate
when functions $\left(G_t\right)_{t \geq 0}$ are bounded.

\begin{thm}{\label{bounded}}
If functions $\left(G_t\right)_{0 \leq t \leq T-1}$ are bounded
and $\E\left|\prod_{t=0}^{T-1} G_t f^2 \right|<\infty$, then
\begin{equation*}
\Var\left[\frac{1}{N_{0}}\sum_{i=1}^{N_{T}}
\hat{w}_{T-1}^{\left(i\right)}f\left(\hat{\xi}_{T-1}^{\left(i\right)},\xi_{T}^{\left(i\right)}\right)\right]
\leq \frac{1}{N_0} \E\left[\prod_{t=0}^{T-1} G_t f^2\right] \prod_{t=0}^{T-1} \sup G_t 
\sum_{t=0}^T \prod_{s=0}^{t-1} C_s
\end{equation*}
where $\left(C_t\right)_{t \geq 0}$ 
are the constants appearing in Assumption \ref{assumption1}.
\end{thm}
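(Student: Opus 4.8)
The plan is to build on the martingale that (per Section~\ref{martingale}) underlies Theorem~\ref{earlyunbiased}, and then read off the variance as the sum of the variances of its increments. First I would enumerate the random sampling operations performed by the algorithm in the order they occur: the initialization (drawing $\xi_0^{(1)},\dots,\xi_0^{(N_0)}$), and then, for each $t = 0,1,\dots,T-1$, the resampling step (drawing $\hat\xi_t^{(1)},\dots,\hat\xi_t^{(N_{t+1})}$ from the columns of $W_t$) followed by the mutation step (drawing $\xi_{t+1}^{(1)},\dots,\xi_{t+1}^{(N_{t+1})}$). Writing $\mathcal{F}_0 \subseteq \mathcal{F}_1 \subseteq \cdots$ for the filtration generated by these successive batches of randomness, I would introduce the running estimator $M_k$ obtained by keeping the particle approximation for every quantity already sampled by stage $k$ and replacing every quantity not yet sampled by its exact conditional Feynman--Kac integral (the kernels $Q_s\psi(x)=\E[G_s\psi(X_s)\mid X_{s-1}=x]$, composed appropriately with $f$ at the end). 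Then $M_0 = \E[\prod_{t=0}^{T-1}G_t f]$, the final $M$ is the SMC estimator, and the content of Section~\ref{martingale} is precisely that $(M_k)$ is an $(\mathcal{F}_k)$-martingale; I would take this as given. Since $\E|\prod_{t=0}^{T-1}G_t f^2| < \infty$ and the $G_t$ are bounded, each $M_k$ lies in $L^2$.

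Next, because the increments of an $L^2$ martingale are pairwise orthogonal, $\Var$ of the final estimator equals $\sum_k \E[(M_k - M_{k-1})^2] = \sum_k \E\!\left[\Var[M_k \mid \mathcal{F}_{k-1}]\right]$, so it remains to bound each per-stage conditional variance. At each stage $M_k$ is an average, weighted by the relevant $w_t^{(i)}$ or $\hat w_t^{(j)}$ divided by $N_0$, of contributions that are \emph{conditionally independent} across particles: across $i$ at initialization, across the columns $j$ at a resampling step (the draws $\hat\xi_t^{(j)}$ are independent given $W_t$ and the ensemble), and across $i$ at a mutation step. For a resampling step at time $t$ this gives $\Var[M_k\mid\mathcal{F}_{k-1}] \le N_0^{-2}\sum_j \hat w_t^{(j)}\sum_i w_t^{(i,j)}\varphi^2(\xi_t^{(i)})$, where $\varphi$ is the remaining kernel applied to $f$; the row-sum property then collapses $\sum_j\sum_i w_t^{(i,j)}\varphi^2(\xi_t^{(i)})$ to $\sum_i w_t^{(i)}\varphi^2(\xi_t^{(i)})$, while Assumption~\ref{assumption1} controls $\max_j \hat w_t^{(j)}$ and the particle count. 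Two elementary inequalities feed in here: boundedness of the $G_t$, iterated through Assumption~\ref{assumption1}, gives the almost-sure bound $\max_i w_t^{(i)} \le \bigl(\prod_{s\le t}\sup G_s\bigr)\bigl(\prod_{s<t}C_s\bigr)$; and Cauchy--Schwarz on each kernel, $(Q_s\psi)^2 \le (\sup G_s)\,Q_s(\psi^2)$, lets one replace the squared kernel $\varphi^2$ by a single kernel applied to $f^2$ at the cost of one factor $\sup G_s$ per step. The mutation and initialization stages are handled the same way, with the particle draws playing the role of the columns.

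Finally I would take $\mathcal{F}_{k-1}$-expectations and propagate forward: by the unbiasedness of the Feynman--Kac propagation (again Section~\ref{martingale}), the expectation of each $N_0^{-1}$-weighted sum of a kernel-of-$f^2$ equals $\E[\prod_{t=0}^{T-1}G_t f^2]$, so the stage-$k$ term is bounded by $N_0^{-1}\,\E[\prod_{t=0}^{T-1}G_t f^2]\,\prod_{t=0}^{T-1}\sup G_t$ times a product of the $C_s$ accumulated between stage $k$ and the end. Summing over the $T+1$ groups of stages (initialization, and the resample/mutate pair at each time) and re-indexing the accumulated $C_s$-products yields exactly the factor $\sum_{t=0}^T\prod_{s=0}^{t-1}C_s$ in the statement. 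I expect the main obstacle to be the bookkeeping in these last two steps: matching the $\sup G_t$ and $C_s$ factors precisely, carrying the coffin state $c$ through every conditional-expectation identity (harmless since test functions vanish there, but it must be checked), and treating the two-argument final function $f(X_{T-1},X_T)$, which forces a slightly different handling of the last kernel and of the mutation at time $T-1$. The probabilistic ideas — orthogonality of martingale increments together with conditional independence within each sampling batch — are routine; the work is in the estimates.
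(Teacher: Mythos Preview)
Your proposal is essentially correct and follows the same strategy as the paper: decompose the variance of the martingale into orthogonal increments, bound each increment using conditional independence across particles together with the almost-sure weight bound $\hat w_{t}^{(i)}\le\prod_{s\le t}C_s\sup G_s$ from Assumption~\ref{assumption1}, and then propagate forward with unbiasedness plus the kernel inequality $(Q_s\psi)^2\le(\sup G_s)\,Q_s(\psi^2)$.

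One organizational point is worth flagging. The paper does \emph{not} separate resampling and mutation: its refined filtration $\mathcal F_t^{(i)}$ adjoins $\hat\xi_{t-1}^{(i)}$ and $\xi_t^{(i)}$ simultaneously, so the single increment at ``time $t$'' already absorbs both the resample at $t-1$ and the mutate to $t$, and one directly obtains $T+1$ terms indexed by $t=0,\dots,T$, each carrying $\prod_{s=0}^{t-1}C_s$. Your filtration instead has $2T+1$ stages; bounding the resample and mutate contributions separately and then adding will produce two terms per time step, both with the \emph{same} factor $\prod_{s=0}^{t}C_s$, so the naive sum is roughly twice the target. To recover the exact constant you must either (i) group resample-at-$(t{-}1)$ with mutate-to-$t$ from the outset and bound the combined increment $\Var[M_t\mid\mathcal F_{t-1}]\le N_0^{-2}\sum_i\E[(w_t^{(i)}h_t)^2\mid\mathcal F_{t-1}]$ directly, as the paper does, or (ii) keep the separation but be content with a bound that differs by a factor of~2. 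This is precisely the bookkeeping issue you anticipated; option~(i) resolves it cleanly.
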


While antecedents of Theorems \ref{weak} and \ref{bounded} appear in the SMC literature \cite{douc2008limit, whiteley2016role},
the versions presented here are more general with respect to possible resampling schemes
or are more powerful with respect to unbounded functions $f$.
In examples outlined below, 
these theorems determine the convergence behavior
of a diverse set of matrix resampling schemes.
See also Figure \ref{figureexamples},
which provides resampling matrices for the three examples.

\begin{example}[Adaptive resampling and parallel resampling]
Two common variations on the SMC framework are adaptive resampling and parallel resampling.
In adaptive resampling \cite{liu2008monte}, a resampling scheme such as 
multinomial or Bernoulli resampling is triggered 
if the variation in weights exceeds a certain threshold;
otherwise, sequential importance sampling is applied instead.
In parallel resampling \cite{li2015resampling}, 
resampling is applied independently on different processors in order to minimize communication costs.
Theorems \ref{weak} and \ref{bounded} guarantee the convergence of many adaptive and parallel resampling schemes.
In particular, convergence is guaranteed even if
the user decides adaptively which resampling scheme to use at the start of each resampling stage
or if resampling decisions are made in parallel across different machines.
\end{example}

\begin{example}[Pruning and enrichment]
In the pruning and enrichment scheme \cite{grassberger1997pruned},
a lower cutoff $u_t$ and an upper cutoff $U_t$ are selected at the beginning of each resampling step.
If $w_t^{\left(i\right)} > U_t$,
then the particle $\xi_t^{\left(i\right)}$ is split into two replicas $\hat{\xi}_t^{\left(j\right)}$
and $\hat{\xi}_t^{\left(k\right)}$ with reduced weights $\hat{w}_t^{\left(j\right)} = \hat{w}_t^{\left(k\right)} = \frac{1}{2} w_t^{\left(i\right)}$.
If $w_t^{\left(i\right)} < u_t$, then instead
an updated particle $\hat{\xi}^{\left(j\right)}$ is drawn from the distribution 
\begin{equation*}
\hat{\xi}^{\left(j\right)} \sim \frac{1}{2} \delta\left(\xi^{\left(i\right)}\right) + \frac{1}{2} \delta\left(c\right)
\end{equation*}
with weight $\hat{w}^{\left(j\right)} = 2w_t^{\left(i\right)}$.
Lastly, if $u_t \leq w_t^{\left(i\right)} \leq U_t$,
the $i$th particle and weight are left unchanged,
with $\left(\hat{w}_t^{\left(j\right)}, \hat{\xi}_t^{\left(j\right)}\right) 
= \left(w_t^{\left(i\right)}, \hat{\xi}_t^{\left(i\right)}\right)$ for some $1 \leq j \leq N_{t+1}$.
Theorems \ref{weak} and \ref{bounded} guarantee
convergence of the pruning and enrichment scheme even when
cutoff values $u_t$ and $U_t$ are selected adaptively
at the start of each resampling stage.
\end{example}

\begin{example}[Rejection control]
The rejection control scheme \cite{liu1998rejection}
mixes sequential importance sampling and Bernoulli resampling.
In this scheme, first compute the average particle weight $\overline{w}_t = \frac{1}{N_0}\sum_{j=1}^{N_t} w_t^{\left(j\right)}$.
Then, if $w_t^{\left(i\right)} \geq \overline{w}_t$, 
the $i$th particle and weight are left unchanged,
with $\left(\hat{w}_t^{\left(j\right)}, \hat{\xi}_t^{\left(j\right)}\right) 
= \left(w_t^{\left(i\right)}, \hat{\xi}_t^{\left(i\right)}\right)$ for some $1 \leq j \leq N_{t+1}$.
Otherwise, if $w_t^{\left(i\right)} < \overline{w}_t$, 
a particle $\hat{\xi}^{\left(j\right)}$ is drawn from the distribution
\begin{equation*}
\hat{\xi}^{\left(j\right)} \sim \frac{w_t^{\left(i\right)}}{\overline{w}_t} \delta\left(\xi^{\left(i\right)}\right)
+ \left(1 - \frac{w_t^{\left(i\right)}}{\overline{w}_t}\right)  \delta\left(c\right)
\end{equation*}
with weight $\hat{w}^{\left(j\right)}=\overline{w}_t$.
Theorems \ref{weak} and \ref{bounded} are the best known convergence results for the rejection control scheme.
\end{example}

\begingroup\abovedisplayskip=0pt\belowdisplayskip=0pt
\begin{figure}[!htbp]
\caption{Examples of resampling matrices $W_0$ when $N_0 = 4$, $N_1 = 5$, and
particles have weights $w_0^{\left(1\right)}=3.2$, $w_0^{\left(2\right)}=2.4$,
$w_0^{\left(3\right)}=.8$, and $w_0^{\left(4\right)}=1.6$.
In parallel resampling the resampling matrix takes a block diagonal form,
with each block corresponding to a different processor.
In pruning and enrichment, the cutoff values are $c_t = 1$ and $C_t = 3$.}{\label{figureexamples}}
\begin{minipage}{.32 \textwidth}
\begin{equation*}
\begin{spmatrix}{parallel mult. resampling}
1.6 & 1.6 & ~ & ~ & ~ \\
1.2 & 1.2 & ~ & ~ & ~ \\
~ & ~ & .4 & .4 & ~ \\
~ & ~ & .8 & .8 & ~ \\
\hline
~ & ~ & ~ & ~ & 2 \\
\end{spmatrix}
\end{equation*}
\end{minipage}
\hfill{}
\begin{minipage}{.32 \textwidth}
\begin{equation*}
\begin{spmatrix}{pruning and enrichment}
1.6 & 1.6 & ~ & ~ & ~ \\
~ & ~ & 2.4 & ~ & ~ \\
~ & ~ & ~ & .8 & ~ \\
~ & ~ & ~ & ~ & 1.6 \\
\hline
~ & ~ & ~ & .8 & ~ \\
\end{spmatrix}
\end{equation*}
\end{minipage}
\hfill{}
\begin{minipage}{.32 \textwidth}
\begin{equation*}
\begin{spmatrix}{rejection control}
3.2 & ~ & ~ & ~ & ~ \\
~ & 2.4 & ~ & ~ & ~ \\
~ & ~ & .8 & ~ & ~ \\
~ & ~ & ~ & 1.6 & ~ \\
\hline
~ & ~ & 1.2 & .4 & 2 \\
\end{spmatrix}
\end{equation*}
\end{minipage}
\end{figure}
\endgroup

\begin{remark}
Many past analyses of SMC \cite{douc2008limit, chan2013general} 
have focused on SMC estimates of ratios
$\E\left[\prod_{t=0}^{T-1} G_t f \right] \slash \E\left[\prod_{t=0}^{T-1} G_t \right]$.
In the present analysis, the central focus is shifted toward SMC estimates of quantities
$\E\left[\prod_{t=0}^{T-1} G_t f \right]$.
This central focus has three advantages.
First, estimates of $\E\left[\prod_{t=0}^{T-1} G_t f \right]$
are unbiased, making them simpler to analyze than estimates of ratios,
which are typically biased. 
Second, unbiased SMC estimates have not been studied in as much detail
as estimates of ratios have been, despite their central importance in rare event sampling and Bayesian statistics 
\cite{del2004feynman, hairer2014improved}.
Third, convergence properties for estimates of ratios
follow as a corollary of convergence properties for unbiased estimates.
For more details of this relationship, refer to the discussion in the appendix.
\end{remark}

% ---------------------------------------------------------------------------------------------------------

\subsection{Martingale analysis of SMC}{\label{martingale}}

Martingale theory provides an essential tool
for the analysis of SMC
\cite{del2004feynman, douc2008limit, chan2013general}.
In the current section, a martingale is used to show
that SMC estimates are unbiased.
In later sections, the same martingale leads to an 
error decomposition and asymptotic error formulas for SMC estimates.

The first step in a martingale analysis is to define a filtration
and a martingale sequence on that filtration.
Toward this goal, fix functions $\left(G_t\right)_{0 \leq t \leq T-1}$ and $f$
and define $\sigma$-algebras and conditional expectations as follows:

\begin{definition}{\label{sigmadef}}
$\sigma$-algebras and conditional expectations
\begin{enumerate}
\item
Introduce the filtration  $\left(\mathcal{F}_t\right)_{-1 \leq t \leq T}$, where
\begin{equation*}
\begin{cases}
\mathcal{F}_{-1} = \left\{\emptyset, \Omega \right\} \\
\mathcal{F}_0 = \sigma \left( \left(\xi_0^{\left(i\right)}\right)_{1 \leq i \leq N_0}, W_0\right) \\
\mathcal{F}_{t+\frac{1}{2}} = \mathcal{F}_t \vee \sigma \left( \left(\hat{\xi}_t^{\left(j\right)}\right)_{1 \leq j \leq N_{t + 1}} \right), 
& 0 \leq t \leq T-1 \\
\mathcal{F}_{t+1} = \mathcal{F}_{t+\frac{1}{2}} \vee \sigma \left( \left(\xi_{t+1}^{\left(i\right)}\right)_{1 \leq i \leq N_{t+1}}, W_t\right), 
& 0 \leq t \leq T-2 \\
\mathcal{F}_T = \mathcal{F}_{T-\frac{1}{2}} \vee \sigma\left(\left(\xi_T^{\left(i\right)}\right)_{1 \leq i \leq N_T}\right)
\end{cases}
\end{equation*}
Here, $\mathcal{G} \vee \mathcal{H}$ denotes the smallest $\sigma$-algebra
containing $\mathcal{G}$ and $\mathcal{H}$.
%Here, $\mathcal{F}_{-1}$ is the trivial $\sigma$-algebra,
%and each subsequent $\sigma$-algebra is defined to include progressively more information.
%Thus, $\mathcal{F}_t$ includes all information 
%up to selection of a resampling matrix $W_t$.
%$\mathcal{F}_{t+\frac{1}{2}}$ includes all of $\mathcal{F}_t$, 
%as well as the values of the updated ensemble
%$\left(w_t^{\left(i\right)}, \xi_t^{\left(i\right)}\right)_{1 \leq i \leq N_t}$.
\item
%Define the conditional expectations
%$h_t \left(x_t\right) = \E \left[\left. \prod_{s=t+1}^{T-1} G_s f \right\rvert X_t = x_t\right]$
%for $0 \leq t \leq T-2$,
%$h_{T-1} \left(x_{T-1}\right) = \E \left[\left. f \right\rvert X_{T-1} = x_{T-1}\right]$, 
%and $h_T \left(x_T\right) = 1$.
Define the conditional expectations
\begin{align*}
& h_t \left(x_t\right) = \E \left[\left. \prod_{s=t+1}^{T-1} G_s f \right\rvert X_t = x_t\right],
& 0 \leq t \leq T-1
\end{align*}
with the convention that $\prod_{\emptyset} G_s = 1$.
\item
To keep the notation simple, 
write $h_T\left(x_T\right) = 1$, $G_T\left(x_{T-1}, x_T\right) = f\left(x_{T-1}, x_T\right)$,
and $w_T^{\left(i\right)} = \hat{w}_{T-1}^{\left(i\right)} 
G_T\left(\hat{\xi}_{T-1}^{\left(i\right)}, \xi_T^{\left(i\right)}\right)$
for $1 \leq i \leq N_T$.
\end{enumerate}
\end{definition}

The next theorem shows that the SMC estimate $\frac{1}{N_0} \sum_{i=1}^{N_T}
\hat{w}_{T-1}^{\left(i\right)} f\left(\hat{\xi}_{T-1}^{\left(i\right)},\xi_T^{\left(i\right)} \right)$
for the quantity $\E\left[\prod_{t=0}^{T-1} G_t f\right]$
can be interpreted as a martingale on the filtration $\mathcal{F}_t$:

\begin{thm}{\label{unbiased}}
If $\E\left|\prod_{t=0}^{T-1} G_t f \right| < \infty$, there exists a martingale $M_t$ on the filtration $\mathcal{F}_t$
that satisfies
\begin{equation*}
\begin{cases}
M_{-1} = \E \left[ \prod_{t=0}^{T-1} G_t f\right] \\
M_t = \frac{1}{N_0} \sum_{i=1}^{N_t} w_t^{\left(i\right)} h_t \left(\xi_t^{\left(i\right)}\right), 
& 0 \leq t \leq T \\
M_{t + \frac{1}{2}} = \frac{1}{N_{0}} \sum_{i=1}^{N_{t+1}} \hat{w}_t^{\left(i\right)}
h_t \left(\hat{\xi}_t^{\left(i\right)}\right), 
& 0 \leq t \leq T-1
\end{cases}
\end{equation*}
%Consequently, SMC estimates are unbiased:
%\begin{equation}
%\E\left[\frac{1}{N_{0}}\sum_{i=1}^{N_{T}}\hat{w}_{T-1}^{\left(j\right)}
%f\left(\hat{\xi}_{T-1}^{\left(i\right)},\xi_{T}^{\left(i\right)}\right)\right]
%=\E\left[\prod_{t=0}^{T-1} G_t f \right]
%\end{equation}
\end{thm}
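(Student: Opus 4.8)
The plan is to take the process $M$ exactly as written in the statement and verify directly that it is adapted to the filtration of Definition~\ref{sigmadef}, integrable, and satisfies the one-step tower identity across each elementary transition of the chain $-1 \to 0 \to \tfrac12 \to 1 \to \cdots \to T-\tfrac12 \to T$; a finite adapted $L^1$ sequence with that property is a martingale. Adaptedness is immediate: for $0\le t\le T$ the weight $w_t^{(i)}$ is $\mathcal F_t$-measurable (it is $G_0(\xi_0^{(i)})$, or a product $\hat w_{t-1}^{(i)}G_t(\hat\xi_{t-1}^{(i)},\xi_t^{(i)})$ of $\mathcal F_t$-measurable factors), the matrix $W_t$ is $\mathcal F_t$-measurable because it may be chosen using only the particles and weights available at stage $t$, and the column sums $\hat w_t^{(j)}$ together with the resampled particles $\hat\xi_t^{(j)}$ are $\mathcal F_{t+1/2}$-measurable; hence $M_t\in\mathcal F_t$, $M_{t+1/2}\in\mathcal F_{t+1/2}$, and $M_{-1}$ is constant.

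For integrability I would run the construction a second time with $f$ replaced by $|f|$, producing a nonnegative process $\bar M$ in which $h_t$ is replaced by $\bar h_t(x)=\E[\prod_{s=t+1}^{T-1}G_s|f| \mid X_t=x]$. Since each $G_s\ge 0$, one has $|M_t|\le\bar M_t$ and $|M_{t+1/2}|\le\bar M_{t+1/2}$ pointwise, and the three identities below, read verbatim with $|f|$ in place of $f$, hold as equalities in $[0,\infty]$ by Tonelli's theorem; they give $\E\bar M_t=\E\bar M_{t+1/2}=\E\bigl[\prod_{t=0}^{T-1}G_t|f|\bigr]=\E\bigl|\prod_{t=0}^{T-1}G_tf\bigr|<\infty$, so every $M_t$ and $M_{t+1/2}$ lies in $L^1$.

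Three tower identities then remain. \emph{(i) Initialization:} $\mathcal F_{-1}$ is trivial and the $\xi_0^{(i)}$ are i.i.d.\ copies of $X_0$, so $\E[M_0\mid\mathcal F_{-1}]=\E[G_0(X_0)h_0(X_0)]$, which equals $\E[\prod_{t=0}^{T-1}G_tf]=M_{-1}$ by the definition of $h_0$ and the tower property. \emph{(ii) Resampling:} conditionally on $\mathcal F_t$ the matrix $W_t$ is known and each $\hat\xi_t^{(j)}$ is drawn independently from the $j$th column distribution of Definition~\ref{def:general}, so $\E[h_t(\hat\xi_t^{(j)})\mid\mathcal F_t]=(\hat w_t^{(j)})^{-1}\bigl(\sum_{i=1}^{N_t}w_t^{(i,j)}h_t(\xi_t^{(i)})+w_t^{(N_t+1,j)}h_t(c)\bigr)$; because transitions from the coffin state stay at $c$ and $f(c,c)=0$ we have $h_t(c)=0$, and multiplying by $\hat w_t^{(j)}/N_0$, summing over $j$, and applying the row-sum identity $\sum_{j=1}^{N_{t+1}}w_t^{(i,j)}=w_t^{(i)}$ collapses the double sum to $M_t$, i.e.\ $\E[M_{t+1/2}\mid\mathcal F_t]=M_t$. \emph{(iii) Mutation:} conditionally on $\mathcal F_{t+1/2}$ each $\xi_{t+1}^{(i)}$ has law $\Law(X_{t+1}\mid X_t=\hat\xi_t^{(i)})$ independently, so $\E[G_{t+1}(\hat\xi_t^{(i)},\xi_{t+1}^{(i)})h_{t+1}(\xi_{t+1}^{(i)})\mid\mathcal F_{t+1/2}]=\E[G_{t+1}(X_t,X_{t+1})h_{t+1}(X_{t+1})\mid X_t=\hat\xi_t^{(i)}]$, which by the Markov property and the tower property equals $h_t(\hat\xi_t^{(i)})$; summing over $i$ gives $\E[M_{t+1}\mid\mathcal F_{t+1/2}]=M_{t+1/2}$. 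The final transition $T-\tfrac12\to T$ is the same computation under the conventions $h_T\equiv 1$, $G_T=f$, $w_T^{(i)}=\hat w_{T-1}^{(i)}f(\hat\xi_{T-1}^{(i)},\xi_T^{(i)})$.

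The hard part is not conceptual but the bookkeeping around it: establishing $\mathcal F_t$-measurability of $W_t$ and the weights, controlling a possibly unbounded $f$ and possibly large weights via the dominating process $\bar M$, and proving the one-step recursion $\E[G_{t+1}(x,X_{t+1})h_{t+1}(X_{t+1})\mid X_t=x]=h_t(x)$, where the Markov property of $(X_t)_{t\ge 0}$ and the definition of $h_t$ are used. The one genuinely delicate point is that the coffin-state term must disappear in step (ii); this is exactly what the extensions $f(c)=0$, $f(c,c)=0$ and the consequent identity $h_t(c)=0$ deliver.
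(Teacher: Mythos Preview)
Your proposal is correct and follows essentially the same route as the paper: both verify the one-step martingale identities across initialization, resampling, and mutation by direct computation using the row-sum property of $W_t$ and the recursion $h_t=\E[G_{t+1}h_{t+1}\mid X_t]$. Your treatment is in fact more careful than the paper's, which simply writes out the conditional-expectation calculations without explicitly addressing adaptedness, integrability, or the vanishing of the coffin-state contribution; your domination argument via $\bar M$ and the observation $h_t(c)=0$ fill gaps the paper leaves implicit.
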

\begin{proof}
For $0 \leq t \leq T-1$,
\begin{align}
\label{startpart1} 
& \quad \E\left[\left.M_{t+1}\right\rvert \mathcal{F}_{t+\frac{1}{2}} \right]
= \E\left[\left.\frac{1}{N_{0}}\sum_{i=1}^{N_{t+1}} 
w_{t+1}^{\left(i\right)} h_{t+1}\left(\xi_{t+1}^{\left(i\right)}\right)
\right\rvert \mathcal{F}_{t+\frac{1}{2}} \right] \\
& = \frac{1}{N_{0}}\sum_{i=1}^{N_{t+1}} \hat{w}_t^{\left(i\right)}
\E\left[\left.
G_{t+1}\left(\hat{\xi}_{t}^{\left(i\right)},\xi_{t+1}^{\left(i\right)}\right)
h_{t+1}\left(\xi_{t}^{\left(i\right)}\right)\right\rvert \mathcal{F}_{t+\frac{1}{2}}\right] \\
\label{endpart1}
& = \frac{1}{N_{0}}\sum_{i=1}^{N_{t+1}}\hat{w}_{t}^{\left(i\right)}
h_{t}\left(\hat{\xi}_{t}^{\left(i\right)}\right)
= M_{t + \frac{1}{2}}
\end{align}
Lines \eqref{startpart1}-\eqref{endpart1} use the fact that $\hat{w}_t^{\left(i\right)}$ is
measurable with respect to $\mathcal{F}_{t+\frac{1}{2}}$, 
as well as the definitions for $w_{t+1}^{\left(i\right)}$, 
$\Law \left(\xi_{t+1}^{\left(i\right)}\rvert \mathcal{F}_{t+\frac{1}{2}}\right)$,
$h_{t+1}$, and $h_t$. 

Next, for $0 \leq t \leq T-1$,
\begin{align}
\label{startpart2} 
& \quad \E\left[\left.M_{t + \frac{1}{2}}\right| \mathcal{F}_t\right] 
= \E\left[\left.\frac{1}{N_{0}}\sum_{j=1}^{N_{t+1}}
\hat{w}_t^{\left(j\right)}h_{t}\left(\hat{\xi}_{t}^{\left(j\right)}\right)
\right\rvert\mathcal{F}_{t}\right] \\
& =\frac{1}{N_{0}}\sum_{j=1}^{N_{t+1}}\hat{w}_t^{\left(j\right)}
\sum_{i=1}^{N_{t}}h_{t}\left(\xi_{t}^{\left(i\right)}\right)
\Prob\left\{\left. \hat{\xi}_{t}^{\left(j\right)} = \xi_t^{\left(i\right)}\right\rvert\mathcal{F}_{t}\right\} \\
\label{endpart2}
& =\frac{1}{N_0} \sum_{j=1}^{N_{t+1}} \hat{w}_t^{\left(j\right)} \sum_{i=1}^{N_t} 
h_{t}\left(\xi_{t}^{\left(j\right)}\right) \frac{w_{t}^{\left(i,j\right)}}{\hat{w}_t^{\left(j\right)}}
=\frac{1}{N_0} \sum_{i=1}^{N_t} w_t^{\left(i\right)}h_{t}\left(\xi_{t}^{\left(i\right)}\right)
= M_t
\end{align}
Lines \eqref{startpart2}-\eqref{endpart2} use the fact that $\hat{w}_t^{\left(j\right)}$
is measurable with respect to $\mathcal{F}_{t}$,
the definition for $\Law\left(\left. \hat{\xi}_{t}^{\left(j\right)}\right\rvert\mathcal{F}_{t}\right)$,
and the fact that $\sum_{j=1}^{N_t} w_t^{\left(i, j\right)} = w_t^{\left(i\right)}$.

Lastly, because $\xi_0^{\left(j\right)} \sim \Law \left(X_0\right)$ for $1 \leq j \leq N_0$,
$\E \left[ \frac{1}{N_0}\sum_{i=1}^{N_0} w_0^{\left(i\right)} h_0\left(\xi_{0}^{\left(i\right)}\right) \right]
= \E \left[G_0 h_0 \right]
= \E \left[ \prod_{t=0}^{T-1} G_t f \right]$.
\end{proof}

Theorem \ref{unbiased} guarantees the unbiasedness of SMC estimates,
confirming Theorem \ref{earlyunbiased}.

% --------------------------------------------------------------------------------------------------------------------

\section{Unified analysis of SMC error}{\label{minimize}}

The current section provides a unified analysis of SMC error
which facilitates comparison of different resampling schemes.
%The section is divided into four parts.
Section \ref{efficiency} defines complete resampling schemes,
a subset of matrix resampling schemes
which will be covered in the error analysis.
Section \ref{factors} explains how error arises within the SMC algorithm
and how error can be reduced by selecting an appropriate resampling scheme.
Section \ref{sort} identifies the matrix resampling scheme that gives
the lowest possible error.
Section \ref{asymtheory} presents new asymptotic formulas
that can be used to rigorously compare the error associated
with different resampling schemes.
%Proofs appear in this section that are simple and direct,
%while more technical proofs appear in the appendix.

% -------------------------------------------------------------------------------------

\subsection{Complete resampling schemes}{\label{efficiency}}

A complete resampling scheme is a matrix resampling scheme with the requirement
that all the updated weights $\hat{w}_t^{\left(j\right)}$ equal the same weight
$\overline{w}_t = \frac{1}{N_0} \sum_{i=1}^{N_t} w_t^{\left(i\right)}$.
Complete resampling schemes, which include Bernoulli resampling and multinomial resampling,
are very prominent in discussions of SMC.
In fact, several previous reviews of resampling methods
focused solely on complete resampling schemes \cite{douc2005comparison, hol2006resampling}.
The error analysis makes the following assumption:
\begin{assumption}{\label{assumption2}}
The resampling scheme is complete; that is, all the updated weights $\hat{w}_t^{\left(j\right)}$ equal the same weight, 
$\overline{w}_t = \frac{1}{N_0} \sum_{i=1}^{N_t} w_t^{\left(i\right)}$.
\end{assumption}

There are two major factors that determine the value of a resampling scheme:
the computational cost of using the scheme and the accuracy of the estimates it provides.
The advantage of analyzing complete resampling schemes is that all complete
resampling schemes share a similar computational cost.
In particular, the computational cost of an SMC algorithm is proportional
to the number of non-coffin particles,
and the next proposition
guarantees that the number of non-coffin particles
is similar for all complete resampling schemes, with a statistical range of $N_0 \pm \sqrt{N_0}$ particles:

\begin{prop}{\label{particlenumber}}
If at least one of the weights $\left(w_t^{\left(i\right)}\right)_{1 \leq i \leq N_0}$
is positive, then the number of non-coffin particles satisfies
\begin{equation*}
\begin{cases}
\E \left[\left. \sum_{j=1}^{N_{t+1}} \mathds{1}\left\{\hat{\xi}_t^{\left(j\right)} \neq c \right\} \right| \mathcal{F}_t\right]  = N_0 \\
\Var \left[\left.\sum_{j=1}^{N_{t+1}} \mathds{1}\left\{\hat{\xi}_t^{\left(j\right)} \neq c \right\} \right| \mathcal{F}_t \right] \leq N_0
\end{cases}
\end{equation*}
\end{prop}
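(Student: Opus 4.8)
The plan is to condition on the $\sigma$-algebra $\mathcal{F}_t$ and recognize $\sum_{j=1}^{N_{t+1}} \mathds{1}\{\hat{\xi}_t^{(j)} \neq c\}$ as a sum of independent Bernoulli random variables, after which both the mean and the variance bound are immediate.

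First I would record the structural facts. By Definition \ref{sigmadef}, the resampling matrix $W_t$ — and hence the common weight $\overline{w}_t = \frac{1}{N_0}\sum_{i=1}^{N_t} w_t^{(i)}$ — is $\mathcal{F}_t$-measurable, and the positivity hypothesis guarantees $\overline{w}_t > 0$, so that (under Assumption \ref{assumption2}) each column of $W_t$, which sums to $\hat{w}_t^{(j)} = \overline{w}_t$, determines a genuine probability distribution. By the resampling step of Definition \ref{def:general}, conditional on $\mathcal{F}_t$ the particles $\hat{\xi}_t^{(1)}, \dots, \hat{\xi}_t^{(N_{t+1})}$ are independent, and
\[
p_j := \Prob\left\{\left. \hat{\xi}_t^{(j)} \neq c \right| \mathcal{F}_t\right\} = \frac{\sum_{i=1}^{N_t} w_t^{(i,j)}}{\overline{w}_t} \in [0,1],
\]
the upper bound holding because the non-coffin column entries sum to at most the full column sum $\overline{w}_t$. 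Thus, conditional on $\mathcal{F}_t$, the indicators $\mathds{1}\{\hat{\xi}_t^{(j)} \neq c\}$ are independent $\mathrm{Bernoulli}(p_j)$ variables.

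The expectation then follows by interchanging the two finite sums and applying the row-sum property $\sum_{j=1}^{N_{t+1}} w_t^{(i,j)} = w_t^{(i)}$ together with $\sum_{i=1}^{N_t} w_t^{(i)} = N_0 \overline{w}_t$:
\[
\E\left[\left. \sum_{j=1}^{N_{t+1}} \mathds{1}\{\hat{\xi}_t^{(j)} \neq c\} \right| \mathcal{F}_t\right] = \sum_{j=1}^{N_{t+1}} p_j = \frac{1}{\overline{w}_t}\sum_{i=1}^{N_t}\sum_{j=1}^{N_{t+1}} w_t^{(i,j)} = \frac{1}{\overline{w}_t}\sum_{i=1}^{N_t} w_t^{(i)} = N_0 .
\]
For the variance, independence of the conditional Bernoullis gives $\Var[\,\cdot\mid \mathcal{F}_t] = \sum_{j=1}^{N_{t+1}} p_j(1-p_j) \leq \sum_{j=1}^{N_{t+1}} p_j = N_0$, using $0 \le p_j \le 1$.

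There is no genuinely hard step here; the only points requiring care are the bookkeeping that $W_t$ and $\overline{w}_t$ are $\mathcal{F}_t$-measurable and that $\overline{w}_t > 0$ (so that the conditional independence of the $\hat{\xi}_t^{(j)}$ from Definition \ref{def:general} can legitimately be invoked and the column distributions make sense), plus the observation that the conditional success probabilities $p_j$ lie in $[0,1]$, which is what turns the elementary identity $\sum_j p_j(1-p_j) \le \sum_j p_j$ into the stated bound.
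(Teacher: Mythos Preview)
Your proof is correct and follows essentially the same route as the paper: both compute $p_j=\Prob\{\hat{\xi}_t^{(j)}\neq c\mid\mathcal{F}_t\}=\sum_{i=1}^{N_t} w_t^{(i,j)}/\overline{w}_t$, sum to $N_0$ via the row-sum constraint, and bound the variance by $\sum_j p_j(1-p_j)\le\sum_j p_j=N_0$. The paper writes the Bernoulli parameter via the coffin row as $1-w_t^{(N_t+1,j)}/\overline{w}_t$, but this is the same quantity, and your extra care with $\mathcal{F}_t$-measurability and $\overline{w}_t>0$ only makes the argument cleaner.
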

\begin{proof}
Calculate
$\sum_{j=1}^{N_{t+1}} \Prob \left\{\left.\hat{\xi}_t^{\left(j\right)} \neq c \right| \mathcal{F}_t \right\} 
= \sum_{j=1}^{N_{t+1}} \sum_{i=1}^{N_t} \frac{w_t^{\left(i, j\right)}}{\overline{w}_t}
= N_0$
and 
\begin{align*}
& \quad \sum_{j=1}^{N_{t+1}} \Var \left[\left. \mathds{1}\left\{\hat{\xi}_t^{\left(i\right)} \neq c \right\} \right| \mathcal{F}_t \right]
= \sum_{j=1}^{N_{t + 1}} \frac{w_t^{\left(N_t + 1,j\right)}}{\overline{w}_t} \left(1-\frac{w_t^{\left(N_t + 1,j\right)}}{\overline{w}_t}\right) \\
& \leq \sum_{j=1}^{N_{t + 1}}\left(1-\frac{w_t^{\left(N_t + 1, j\right)}}{\overline{w}_t}\right)
= \sum_{j=1}^{N_{t + 1}} \sum_{i = 1}^{N_t} \frac{w_t^{\left(i,j\right)}}{\overline{w}_t} = N_0
\end{align*}
\end{proof}

Since all complete resampling schemes share a similar computational cost,
it is the accuracy of these schemes that should be the determining factor in deciding which scheme to use.
The accuracy of SMC estimates made using various resampling schemes is explored in depth in the subsequent sections.

% -----------------------------------------------------------------------------------------------
\subsection{Factors contributing to SMC error}{\label{factors}}

The goal of the current section is to show how each step of the SMC algorithm
contributes error to SMC estimates
and how this error can be reduced by selecting an appropriate resampling scheme.

The starting point for the decomposition of SMC error is the martingale introduced in Theorem \ref{unbiased}.
\begin{equation*}
\begin{cases}
M_{-1} = \E \left[ \prod_{t=0}^{T-1} G_t f\right] \\
M_t = \frac{1}{N_0} \sum_{i=1}^{N_t} w_t^{\left(i\right)} h_t \left(\xi_t^{\left(i\right)}\right), 
& 0 \leq t \leq T \\
M_{t + \frac{1}{2}} = \frac{\overline{w}_t}{N_0} \sum_{i=1}^{N_{t+1}}
h_t \left(\hat{\xi}_t^{\left(i\right)}\right), 
& 0 \leq t \leq T-1
\end{cases}
\end{equation*}
where
$h_t \left(x_t\right) = \E \left[\left. \prod_{s=t+1}^{T-1} G_s f \right\rvert X_t = x_t\right]$.
At time $t = -1$,
the martingale is a perfect estimate $M_{-1} = \E\left[\prod_{t=0}^{T-1} G_t f\right]$.
At time $t = T$,
the martingale has evolved to become an imperfect estimate
$M_T = \frac{\overline{w}_{T-1}}{N_0} \sum_{i=1}^{N_T} f\left(\hat{\xi}_{T-1}^{\left(i\right)}, \xi_T^{\left(i\right)}\right)$.
An additive decomposition of SMC error is
\begin{align*}
\label{thisone}
& \quad \frac{\overline{w}_{T-1}}{N_0} \sum_{i=1}^{N_T} f\left(\hat{\xi}_{T-1}^{\left(i\right)}, \xi_T^{\left(i\right)}\right)
- \E\left[\prod_{t=0}^{T-1} G_t f\right] \\
& = \underbrace{\left(M_0 - M_{-1}\right)}_{\text{initialization error}} + 
\sum_{t=0}^{T-1} \underbrace{\left(M_{t+\frac{1}{2}} - M_t\right)}_{\text{resampling error}}
+ \sum_{t=0}^{T-1} \underbrace{\left(M_{t+1} - M_{t+\frac{1}{2}}\right)}_{\text{mutation error}}
\end{align*}

In this decomposition, SMC error is the sum of three uncorrelated error sources:
initialization error, resampling error and mutation error.
%The current section analyzes each error source and concludes
%that resampling error is the primary error source that can be reduced
%by a judicious choice of a resampling scheme.
%Moreover, a simple strategy for reducing resampling error is introduced. 
%In examples at the end of the section,
%several common resampling schemes are compared on the basis of resampling error.
The first error source is initialization error, 
which can be written
\begin{equation*}
M_0 - M_{-1} = \frac{1}{N_0} \sum_{i=1}^{N_0} 
\left\{G_0\left(\xi_0^{\left(i\right)}\right) h_0\left(\xi_0^{\left(i\right)}\right)
- \E\left[G_0 h_0\right]\right\}
\end{equation*}
Intialization error is caused by random sampling of the particles $\left(\xi_0^{\left(i\right)}\right)_{1 \leq i \leq N_0}$
during the initialization step.
The mean squared initialization error can be calculated
\begin{equation*}
\E\left|M_0 - M_{-1}\right|^2 = \Var\left[\frac{1}{N_0}\sum_{i=1}^{N_0} G_0\left(\xi_0^{\left(i\right)}\right) h_0\left(\xi_0^{\left(i\right)}\right)\right] = \frac{1}{N_0}\Var\left[G_0 h_0\right]
\end{equation*}
This error source is the same for all resampling schemes,
with no dependence on the particular resampling scheme that is used.

Similar to initialization error is mutation error.
Mutation error arises from the random sampling of particles $\left(\xi_t^{\left(i\right)}\right)_{1 \leq i \leq N_t}$ during a mutation step.
Mutation error $M_{t+1} - M_{t+\frac{1}{2}}$ can be written
\begin{equation*}
\frac{\overline{w}_t}{N_0}\sum_{i=1}^{N_t}
\left\{G_{t+1}\left(\hat{\xi}_t^{\left(i\right)}, \xi_{t+1}^{\left(i\right)}\right) h_{t+1}\left(\xi_{t+1}^{\left(i\right)}\right)
- \E\left[\left.G_{t+1} h_{t+1} \right|X_t = \hat{\xi}_t^{\left(i\right)}\right]\right\}
\end{equation*}
An asymptotic expansion shows how mutation error approaches a fixed asymptotic limit,
regardless of which resampling scheme is used:
\begin{prop}{\label{decomposition}}
Assume functions $\left(G_t\right)_{0 \leq t \leq T-1}$ are bounded and assume
$\E\left[\prod_{t=0}^{T-1} G_t f^2 \right] < \infty$. 
Then, at each time $0 \leq t \leq T-1$ there exists a constant $C>0$,
independent of resampling scheme, such that
\begin{multline*}
\frac{1}{N_0}\E\left[\prod_{s=0}^t G_s\right] 
\E\left[\prod_{s=0}^t G_s \Var\left[\left.G_{t+1} h_{t+1} \right| X_t\right]\right]
\leq \E\left|M_{t+1} - M_{t + \frac{1}{2}}\right|^2 \\
\leq \frac{1}{N_0} \E\left[\prod_{s=0}^t G_s\right] 
\E\left[\prod_{s=0}^t G_s \Var\left[\left.G_{t+1} h_{t+1} \right| X_t\right]\right] + \frac{C}{N_0^2}
\end{multline*}
\end{prop}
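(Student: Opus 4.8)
The plan is to reduce the mean squared mutation error to the expectation of a product of two ordinary SMC averages, extract its leading term by unbiasedness, and control the remaining correction by telescoping the martingale of Theorem \ref{unbiased}.

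First I would condition on $\mathcal{F}_{t+\frac12}$. Completeness gives $\hat w_t^{(i)}=\overline{w}_t$, so $M_{t+1}-M_{t+\frac12}=\frac{\overline{w}_t}{N_0}\sum_{i=1}^{N_{t+1}}\bigl(G_{t+1}(\hat\xi_t^{(i)},\xi_{t+1}^{(i)})h_{t+1}(\xi_{t+1}^{(i)})-h_t(\hat\xi_t^{(i)})\bigr)$ is, given $\mathcal{F}_{t+\frac12}$, a sum of conditionally independent mean-zero terms (using $h_t=\E[G_{t+1}h_{t+1}\mid X_t]$ and the mutation law). Hence
\begin{equation*}
\E\left[\,|M_{t+1}-M_{t+\frac12}|^2\mid\mathcal{F}_{t+\frac12}\,\right]=\frac{\overline{w}_t^2}{N_0^2}\sum_{i=1}^{N_{t+1}}V(\hat\xi_t^{(i)}),\qquad V(x):=\Var[G_{t+1}h_{t+1}\mid X_t=x],
\end{equation*}
with $V(c)=0$ since a coffin particle stays in the coffin. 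Conditioning further on $\mathcal{F}_t$, using the column law of $\hat\xi_t^{(i)}$ and $\sum_j w_t^{(i,j)}=w_t^{(i)}$, and cancelling one factor $\overline{w}_t$ by completeness, I obtain the exact identity
\begin{equation*}
\E|M_{t+1}-M_{t+\frac12}|^2=\frac1{N_0}\E[\overline{w}_t A_t],\qquad A_t:=\frac1{N_0}\sum_{i=1}^{N_t}w_t^{(i)}V(\xi_t^{(i)}).
\end{equation*}
Both $\overline{w}_t$ and $A_t$ are SMC estimators in the sense of Theorem \ref{earlyunbiased}, with horizon $t$ and terminal functions $G_t$ and $G_t V$; unbiasedness gives $\E[\overline{w}_t]=\E[\prod_{s=0}^t G_s]$ and $\E[A_t]=\E[\prod_{s=0}^t G_s\,\Var[G_{t+1}h_{t+1}\mid X_t]]$, the latter finite because Jensen's inequality bounds $h_{t+1}^2$ by $\E[(\prod_{s=t+2}^{T-1}G_s f)^2\mid X_{t+1}]$ and the $G_s$ are bounded, so it is dominated by $\E[\prod_{s=0}^{T-1}G_s f^2]$. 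Writing $\E[\overline{w}_t A_t]=\E[\overline{w}_t]\E[A_t]+\Cov[\overline{w}_t,A_t]$ reduces the proposition to the two bounds $0\le\Cov[\overline{w}_t,A_t]$ and $\Cov[\overline{w}_t,A_t]\le C/N_0$, with $C$ not depending on the resampling scheme.

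For the upper bound I would express $\overline{w}_t$ and $A_t$ as terminal values of martingales on $(\mathcal{F}_s)_{-1\le s\le t}$ and telescope $\Cov[\overline{w}_t,A_t]$ into a sum, over the $O(T)$ half-steps (initialization and each resampling and mutation stage), of local corrections of the form $\frac{(\mathrm{weight})^2}{N_0^2}\sum_{\mathrm{particles}}\kappa(\cdot)$, where $\kappa$ is a conditional covariance of the two martingales' increment functions. In each term I would pair one weight factor with the particle sum to form a post-step empirical integral, whose expectation is a Feynman--Kac integral of $\kappa$ against $\prod_s G_s$ by unbiasedness, and bound the remaining weight by a deterministic constant; this is legitimate because boundedness of the $G_s$ and Assumption \ref{assumption1} force $\overline{w}_s\le\prod_{u\le s}\sup G_u\cdot\prod_{1\le u\le s}C_u$. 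I would then bound $\kappa$ by the supremum of a bounded product of $G$'s times a $G$-weighted conditional expectation of $V$; keeping the leading weight $G_{s+1}$ inside (rather than pulling it out) makes the resulting integral collapse to a fixed multiple of $\E[\prod_{s=0}^t G_s V(X_t)]<\infty$, so no moment of $f$ beyond $\E[\prod G f^2]$ is needed and the emerging constant depends only on the Feynman--Kac data, on $T$, and on the $C_s$. Summing the finitely many corrections yields $\Cov[\overline{w}_t,A_t]\le C/N_0$.

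The hard part is the lower bound $\Cov[\overline{w}_t,A_t]\ge0$. Unlike the conditional second moments above, the paired increment-covariances $\kappa$ need not be individually nonnegative --- a covariance of two nonnegative random variables can be negative --- so the sum cannot be signed term by term. I would try to exploit the structure of $V$ as a conditional variance, rewriting each correction in terms of conditional second moments and invoking an FKG- or Chebyshev-sum-type comonotonicity property of $x\mapsto\E[\prod_{u>s}G_u\mid X_s=x]$ and $x\mapsto\E[\prod_{u>s}G_u\,V(X_t)\mid X_s=x]$, or else reorganizing the telescoping so that negative contributions at early stages are absorbed by later positive ones. Proving that the accumulated correction is nonnegative is the step I expect to demand the most care.
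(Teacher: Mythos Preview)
Your reduction to the identity $\E|M_{t+1}-M_{t+\frac12}|^2=\frac{1}{N_0}\E[\overline w_t A_t]$ with $A_t=\frac{1}{N_0}\sum_i w_t^{(i)}V(\xi_t^{(i)})$ is exactly what the paper does (the paper stops one conditioning earlier, at $\mathcal F_{t+\frac12}$, but since $\overline w_t$ is $\mathcal F_t$-measurable the two expressions agree). From there you and the paper diverge.

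\textbf{Upper bound.} Your telescoping plan works in principle but is unnecessarily heavy. The paper simply writes
\[
\Cov[\overline w_t,A_t]\le \Var[\overline w_t]^{1/2}\,\Var[A_t]^{1/2}
\]
by Cauchy--Schwarz and then invokes Theorem~\ref{bounded} to bound each variance by a constant over $N_0$, since both $\overline w_t$ and $A_t$ are SMC estimators with bounded potentials. That gives $\Cov[\overline w_t,A_t]\le C/N_0$ in one line.

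\textbf{Lower bound.} You are right to flag $\Cov[\overline w_t,A_t]\ge 0$ as the hard step. In fact it is \emph{false} as stated, and no FKG or comonotonicity argument will rescue it. Take $t=0$, so that $\overline w_0$ and $A_0$ are i.i.d.\ averages and $\Cov[\overline w_0,A_0]=\frac{1}{N_0}\Cov[G_0(X_0),G_0(X_0)V(X_0)]$. Let $X_0$ be uniform on $\{a,b\}$ with $G_0(a)=2$, $G_0(b)=1$; arrange the chain so that from $a$ the transition is deterministic (hence $V(a)=0$) while from $b$ it is not (say $V(b)=1$). Then $\Cov[G_0,G_0V]=\tfrac12\cdot 4\cdot 0+\tfrac12\cdot 1\cdot 1-\tfrac32\cdot\tfrac12=-\tfrac14<0$, so $\E|M_1-M_{1/2}|^2=\frac{1}{N_0}\bigl(\tfrac34-\tfrac{1}{4N_0}\bigr)$ strictly undershoots the claimed lower bound $\tfrac{3}{4N_0}$.

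The paper's proof simply asserts ``$0\le$'' at the start of its displayed chain and never justifies it; this appears to be a slip. The statement that is actually provable (and is all that the rest of the paper needs) is the two-sided estimate
\[
\bigl|\Cov[\overline w_t,A_t]\bigr|\le \Var[\overline w_t]^{1/2}\,\Var[A_t]^{1/2}\le \frac{C}{N_0},
\]
which yields the proposition with an additional $-C/N_0^2$ on the lower side. So: abandon the attempt to prove nonnegativity of the covariance, and replace both your upper-bound telescoping and your lower-bound programme by the single Cauchy--Schwarz step above together with Theorem~\ref{bounded}.
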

\begin{proof}
Define $\nu\left(x_t\right) = \Var\left[\left.G_{t+1} h_{t+1}\right| X_t = x_t\right]$. By Theorem \ref{bounded},
\begin{align*}
0 & \leq \E \left[ \frac{\overline{w}_t^2}{N_0} \sum_{i=1}^{N_t} \nu\left(\hat{\xi}_t^{\left(i\right)}\right)\right]
- \E \left[\overline{w}_t\right] \E\left[\frac{\hat{w}_t}{N_0} \sum_{i=1}^{N_t} \nu\left(\hat{\xi}_t^{\left(i\right)}\right)\right] \\
& = \Cov\left[\overline{w}_t, \frac{\hat{w}_t}{N_0} \sum_{i=1}^{N_t} \nu\left(\hat{\xi}_t^{\left(i\right)}\right)\right]
\leq \Var\left[\overline{w}_t\right]^{\frac{1}{2}}
\Var \left[\frac{\overline{w}_t}{N_0} \sum_{i=1}^{N_t} \nu\left(\hat{\xi}_t^{\left(i\right)}\right)\right]^{\frac{1}{2}}
\leq \frac{C}{N_0}
\end{align*}
Moreover, Theorem \ref{unbiased} guarantees $\E\left[\overline{w}_t\right] = \E\left[\prod_{s=0}^t G_s\right]$
and 
\begin{equation*}
\E\left[\frac{\overline{w}_t}{N_0} \sum_{i=1}^{N_t} \nu\left(\hat{\xi}_t^{\left(i\right)}\right)\right]
= \E\left[\prod_{s=0}^t G_s \Var\left[\left.G_{t+1} h_{t+1}\right|X_t\right]\right]
\end{equation*}
\end{proof}

In summary, Proposition \ref{decomposition} demonstrates that mutation error,
just like initialization error, does not depend on which particular complete resampling scheme is used.

Having discussed two sources of SMC error -- initialization error and resampling error --
the last error source that remains to be discussed is resampling error.
Resampling error can be written
\begin{equation*}
M_{t+\frac{1}{2}} - M_t
= \frac{\overline{w}_t}{N_0} \sum_{j=1}^{N_{t+1}} h_t\left(\hat{\xi}_t^{\left(j\right)}\right)
- \frac{1}{N_0} \sum_{i=1}^{N_t} h_t\left(\xi_t^{\left(i\right)}\right)
\end{equation*}
Resampling error results from random population changes during the resampling step.
Resampling error exhibits quite different behavior from initialization and mutation error:
the size of this error can vary significantly
depending on which particular resampling scheme is used.

A tool for measuring resampling error \cite{douc2005comparison} is resampling variance
%What is an appropriate way to measure the size of resampling error?
%Several possibilities have appeared in the SMC literature.
%An error measure used by \citet{crisan2002minimal} was the mean relative entropy
%from $\frac{1}{N_0} \sum_{i=1}^{N_t} w_t^{\left(i\right)} \delta\left(\xi_t^{\left(i\right)}\right)$
%to $\frac{\hat{w}_t}{N_0} \sum_{j=1}^{N_{t+1}} \delta\left(\hat{\xi}_t^{\left(j\right)}\right)$.
%As an alternative error measure, \citet{douc2005comparison} proposed using the resampling variance
\begin{equation*}
\hat{V}_t^2\left[h_t\right] = \Var\left[\left.\frac{\overline{w}_t}{N_0} \sum_{j=1}^{N_{t+1}} h_t\left(\hat{\xi}_t^{\left(j\right)}\right) 
\right| \mathcal{F}_t\right]
\end{equation*}
Reducing resampling variance is a means toward increasing SMC efficiency.
%Resampling variance turns out to be a natural and versatile error metric.
%Resampling variance can be compared between different resampling schemes,
%and it can even be minimized explicitly in a few contexts. 
%Resampling variance is also closely linked to asymptotic error rates for SMC estimates (see Section \ref{asymtheory}).
%The ultimate justification for using resampling variance, however, comes in Section \ref{asymtheory},
%where it is seen there are close links between 
%between resampling variance and asymptotic error.
%The notation $\hat{V}_t^2\left[h_t\right]$ highlights the dependence of resampling variance
%on the conditional expectation function $h_t$.
As illustrated in the next lemma,
resampling variance can be reduced by selecting an appropriate resampling scheme:
\begin{lem}{\label{simplelemma}}
\begin{enumerate}[label = (\alph*)]
\item{\label{representation1}}
Let $h_t \in \mathbb{R}^{N_t + 1}$ denote the vector with $h_t^{\left(i\right)}=h_t\left(\xi^{\left(i\right)}\right)$ for $1 \leq i \leq N_t$ and $h_t^{\left(N_t + 1\right)} = 0$.
Then, resampling variance $\hat{V}_t^2\left[h_t\right]$ can be written as a quadratic function of the resampling matrix $W_t$:
\begin{equation*}
\frac{\overline{w}_t}{N_0^2} \sum_{i=1}^{N_t} w_t^{\left(i\right)}
\left|h_t\left(\xi_t^{\left(i\right)}\right)\right|^2
- \frac{1}{N_0^2} h_t^T W_t W_t^T h_t
\end{equation*}
Consequently, minimizing resampling variance is a concave minimization problem.
\item{\label{representation3}}
Consider a resampling matrix $W_t$ containing a sequence of columns
$c_{j_1}, c_{j_2}, \ldots, c_{j_K}$.
Then, replacing the columns $c_{j_1}, c_{j_2}, \ldots, c_{j_K}$ 
with $K$ identical columns $\frac{1}{K} \sum_{k=1}^K c_{j_k}$
either increases resampling variance or leaves resampling variance unchanged.
\end{enumerate}
\end{lem}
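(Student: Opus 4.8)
The plan is to prove part \ref{representation1} by a direct computation of the conditional variance, then derive part \ref{representation3} as a consequence of the quadratic structure.

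\textbf{Part \ref{representation1}.} First I would expand $\hat{V}_t^2[h_t] = \Var\bigl[\frac{\overline{w}_t}{N_0}\sum_{j=1}^{N_{t+1}} h_t(\hat\xi_t^{(j)}) \bigm| \mathcal{F}_t\bigr]$. Conditional on $\mathcal{F}_t$, the particles $\hat\xi_t^{(j)}$ are independent across $j$, drawn from the distribution whose mass on $\xi_t^{(i)}$ (or on $c$) is $w_t^{(i,j)}/\hat w_t^{(j)} = w_t^{(i,j)}/\overline{w}_t$ (using Assumption \ref{assumption2} that all column sums equal $\overline{w}_t$). So the variance of the sum is the sum of variances:
\begin{equation*}
\hat{V}_t^2[h_t] = \frac{\overline{w}_t^2}{N_0^2} \sum_{j=1}^{N_{t+1}} \Var\bigl[h_t(\hat\xi_t^{(j)}) \bigm| \mathcal{F}_t\bigr].
\end{equation*}
For each $j$, write $\Var[h_t(\hat\xi_t^{(j)})|\mathcal{F}_t] = \E[h_t(\hat\xi_t^{(j)})^2|\mathcal{F}_t] - (\E[h_t(\hat\xi_t^{(j)})|\mathcal{F}_t])^2$. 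Using the column distribution and the augmented vector $h_t \in \mathbb{R}^{N_t+1}$ with $h_t^{(N_t+1)}=0$, the second moment is $\sum_i (w_t^{(i,j)}/\overline{w}_t)|h_t^{(i)}|^2$ and the first moment is $\sum_i (w_t^{(i,j)}/\overline{w}_t) h_t^{(i)} = \frac{1}{\overline{w}_t}(W_t^T h_t)_j$. Summing the second-moment terms over $j$ and using the row-sum property $\sum_j w_t^{(i,j)} = w_t^{(i)}$ gives $\frac{1}{\overline{w}_t}\sum_{i=1}^{N_t} w_t^{(i)}|h_t(\xi_t^{(i)})|^2$; summing the squared first-moment terms over $j$ gives $\frac{1}{\overline{w}_t^2}\|W_t^T h_t\|^2 = \frac{1}{\overline{w}_t^2} h_t^T W_t W_t^T h_t$. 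Multiplying through by $\overline{w}_t^2/N_0^2$ yields exactly the claimed expression. The concavity claim is then immediate: the first term is linear in $W_t$ (the row sums $w_t^{(i)}$ are fixed by $\mathcal{F}_t$), and $- h_t^T W_t W_t^T h_t = -\|W_t^T h_t\|^2$ is a negative semidefinite quadratic form in the entries of $W_t$, so $\hat{V}_t^2[h_t]$ is concave in $W_t$.

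\textbf{Part \ref{representation3}.} Since the linear term depends only on the row sums $w_t^{(i)} = \sum_j w_t^{(i,j)}$, and replacing columns $c_{j_1},\dots,c_{j_K}$ by $K$ copies of their average $\bar c = \frac{1}{K}\sum_k c_{j_k}$ preserves every row sum, the linear term is unchanged. Thus the change in resampling variance equals $-\frac{1}{N_0^2}$ times the change in $\sum_j \|h_t^T c_j\|^2 = \sum_j (h_t^T c_j)^2$ restricted to the affected columns. Writing $a_k = h_t^T c_{j_k}$, the original contribution is $\sum_{k=1}^K a_k^2$ and the new contribution is $K (h_t^T \bar c)^2 = K\bigl(\frac{1}{K}\sum_k a_k\bigr)^2 = \frac{1}{K}\bigl(\sum_k a_k\bigr)^2$. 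By Cauchy--Schwarz (or convexity of $x\mapsto x^2$), $\frac{1}{K}(\sum_k a_k)^2 \le \sum_k a_k^2$, so the quadratic term does not increase; hence resampling variance $\hat{V}_t^2[h_t]$ either increases or stays the same. Equality holds precisely when all $a_k$ are equal.

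\textbf{Main obstacle.} The only real care point is bookkeeping around the coffin state and Assumption \ref{assumption2}: one must consistently use $h_t^{(N_t+1)}=0$ so that the $\delta(c)$ atom contributes nothing to either moment, and invoke completeness so that every column sum is $\overline{w}_t$ rather than a column-dependent $\hat w_t^{(j)}$ — without completeness the clean factorization $\hat V_t^2[h_t] = \frac{\overline{w}_t^2}{N_0^2}\sum_j \Var[\cdots]$ with the stated quadratic form would not hold. Everything else is a routine second-moment expansion.
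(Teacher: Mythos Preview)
Your proof is correct and, for part \ref{representation1}, essentially identical to the paper's: both expand the conditional variance using independence of the $\hat\xi_t^{(j)}$, split into second moment minus squared first moment, and use the row-sum constraint to collapse the second-moment sum.

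For part \ref{representation3} there is a minor presentational difference. The paper argues via symmetrization: it forms the $K$ matrices obtained by cyclically permuting the columns $c_{j_1},\ldots,c_{j_K}$, observes that each has the same value of $h_t^T W_t W_t^T h_t$, and then invokes convexity of $W_t\mapsto h_t^T W_t W_t^T h_t$ on their average. You instead work directly with the scalar projections $a_k=h_t^T c_{j_k}$ and apply the one-variable inequality $\frac{1}{K}\bigl(\sum_k a_k\bigr)^2\le\sum_k a_k^2$. These are the same convexity fact unpacked at different levels of abstraction; your version is slightly more elementary and also gives the equality case explicitly. Your remark that the averaging preserves row sums (so the first term is unaffected) is a clean way to isolate the comparison to the quadratic piece, which the paper leaves implicit.
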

\begin{proof}[Proof of Lemma \ref{simplelemma}]
Resampling variance $\hat{V}^2_t\left[h_t\right]$ can be written as
\begin{align*}
& \quad \Var\left[\left.\frac{\overline{w_t}}{N_0}  \sum_{j=1}^{N_{t+1}}
h_t\left(\hat{\xi}_t^{\left(j\right)}\right) \right| \mathcal{F}_t\right] 
=\frac{\overline{w}_t^2}{N_0^2} \sum_{j=1}^{N_{t+1}} \Var\left[\left.
h_t\left(\hat{\xi}_t^{\left(j\right)}\right) \right| \mathcal{F}_t\right] \\
& = \frac{\overline{w}_t^2}{N_0^2} \sum_{j=1}^{N_{t+1}} 
\E\left[\left.\left|h_t\left(\hat{\xi}_t^{\left(j\right)}\right)\right|^2 \right| \mathcal{F}_t\right]
- \frac{\overline{w}_t^2}{N_0^2} \sum_{j=1}^{N_{t+1}} 
\left|\E\left[\left.h_t\left(\hat{\xi}_t^{\left(j\right)}\right)\right|\mathcal{F}_t\right] \right|^2 \\
& =  \frac{\overline{w}_t}{N_0} \E\left[\left. \frac{\overline{w}_t}{N_0} 
\sum_{j=1}^{N_{t+1}} \left|h_t\left(\hat{\xi}_t^{\left(j\right)}\right)\right|^2 \right|\mathcal{F}_t\right]
- \frac{1}{N_0^2} \sum_{j=1}^{N_{t+1}}
\left|\sum_{i=1}^{N_t} w_t^{\left(i,j\right)} h_t\left(\xi_t^{\left(i\right)}\right) \right|^2 \\
& = \frac{\overline{w}_t}{N_0^2} \sum_{i=1}^{N_t} w_t^{\left(i\right)}
\left|h_t\left(\xi_t^{\left(i\right)}\right)\right|^2
- \frac{1}{N_0^2} h_t^T W_t W_t^T h_t
\end{align*}

Next, let $W_t^{\left(1\right)}, W_t^{\left(2\right)}, \ldots, W_t^{\left(K\right)}$ be the resampling matrices
formed by cyclic permutations of columns $c_{j_1}, c_{j_2}, \ldots, c_{j_K}$.
Then, $h_t^T W_t^{\left(k\right)} {W_t^{\left(k\right)}}^T h_t = h_t^T W_t W_t^T h_t$ for each $1 \leq k \leq K$.
By convexity of $W_t \mapsto h_t^T W_t W_t^T h_t$,
\begin{multline*}
h_t^T \left(\frac{1}{K} \sum_{k=1}^K W_t^{\left(k\right)}\right)
\left(\frac{1}{K} \sum_{k=1}^K W_t^{\left(k\right)}\right)^T h_t \\
\leq \frac{1}{K} \sum_{k=1}^K h_t^T W_t^{\left(k\right)} {W_t^{\left(k\right)}}^T h_t
= h_t^T W_t W_t^T h_t
\end{multline*}
\end{proof}

The second part of Lemma \ref{simplelemma} 
is a useful device for comparing common resampling schemes.
In examples below, the lemma is used to analyze efficiency of
three common resampling schemes:
\emph{stratified}, \emph{multinomial residual},
and \emph{stratified residual} resampling.
See also Figure \ref{figure2}, which provides resampling matrices for these three schemes.

\begin{example}[Stratified resampling]
In stratified resampling \cite{kitagawa1996monte},
sample uniform random variables
$U_t^{\left(j\right)} \sim \Unif \left(\frac{j-1}{N_0},\frac{j}{N_0}\right)$ for $1 \leq j \leq N_0$
and select particles 
$\hat{\xi}_t^{\left(j\right)} = Q_t\left(U_t^{\left(j\right)}\right)$, where
\begin{align*}
& Q_t\left(x\right)=\xi_t^{\left(i\right)},
&
\frac{\sum_{k=1}^{i-1} w_t^{\left(k\right)}}
{\sum_{k=1}^{N_t} w_t^{\left(k\right)}}
\leq x < 
\frac{\sum_{k=1}^i w_t^{\left(k\right)}}
{\sum_{k=1}^{N_t} w_t^{\left(k\right)}}
\end{align*}
It is seen in Figure \ref{figure2} that the resampling matrix for stratified resampling
takes a particular form,
with nonzero matrix entries forming a path rightwards and downwards.
By averaging over all matrix columns, the multinomial resampling matrix is obtained.
Thus, by Lemma \ref{simplelemma}, the resampling variance of stratified resampling
is always as low or lower than that of multinomial resampling.
\end{example}
\begin{example}
In multinomial residual resampling \cite{brindle1980genetic}, first select $\left\lfloor \frac{w_t^{\left(i\right)}}{\overline{w}_t}\right\rfloor$
copies of each particle $\xi_t^{\left(i\right)}$.
Then, select an additional
$R_t = \sum_{i=1}^{N_t} \left\{ \frac{w_t^{\left(i\right)}}{\overline{w}_t}\right\}$ particles 
$\hat{\xi}_t^{\left(j\right)}$
independently from the distribution
\begin{equation*}
\hat{\xi}_t^{\left(j\right)} \sim \frac{1}{R_t} \sum_{i=1}^{N_t}
\left\{ \frac{w_t^{\left(i\right)}}{\overline{w}_t}\right\} \delta\left(\xi_t^{\left(i\right)}\right)
\end{equation*}
It is seen in Figure \ref{figure2} that the resampling matrix for multinomial residual resampling
contains a block of columns with just one nonzero matrix entry per column.
By averaging over all matrix columns, the multinomial resampling matrix is obtained.
Thus, by Lemma \ref{simplelemma}, the resampling variance of stratified resampling
is always as low or lower than that of multinomial resampling.
\end{example}
\begin{example}
Stratified residual resampling \cite{bolic2003new}
combines aspects of stratified resampling and multinomial residual resampling.
First select $\left\lfloor \frac{w_t^{\left(i\right)}}{\overline{w}_t}\right\rfloor$
copies of each particle $\xi_t^{\left(i\right)}$.
Then, for $1 \leq j \leq R_t = \sum_{i=1}^{N_t} \left\{ \frac{w_t^{\left(i\right)}}{\overline{w}_t}\right\}$, 
sample a uniform random variable $U_t^{\left(j\right)} \sim \Unif \left(\frac{j-1}{R_t},\frac{j}{R_t}\right)$ and 
select the particle $\hat{\xi}_t^{\left(j\right)} = Q_t\left(U_t^{\left(j\right)}\right)$,
where
\begin{align*}
& Q_t\left(x\right)=\xi_t^{\left(i\right)},
&
\frac{1}{R_t} \sum_{k=1}^{i-1} \left\{ \frac{w_t^{\left(k\right)}}{\overline{w}_t}\right\}
\leq x < 
\frac{1}{R_t} \sum_{k=1}^i \left\{ \frac{w_t^{\left(k\right)}}{\overline{w}_t}\right\}
\end{align*}
The resampling matrix for stratified residual resampling
contains a block of columns where entries for a path rightwards and downwards.
By averaging over this block of columns, the multinomial residual matrix is obtained.
By Lemma \ref{simplelemma}, the resampling variance of stratified residual resampling
is as low or lower than that of multinomial residual resampling.
\end{example}

\begingroup\abovedisplayskip=0pt\belowdisplayskip=0pt
\begin{figure}[!htbp]
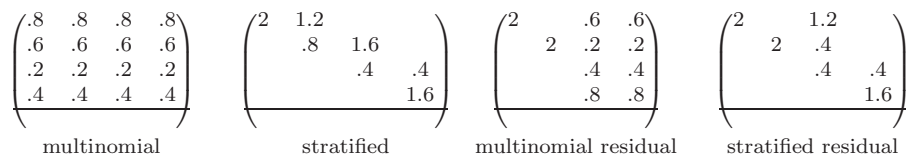

\caption{Examples of resampling matrices $W_0$ when $N_0 = 4$, $N_1 = 4$, and
particles have weights $w_0^{\left(1\right)}=3.2$, $w_0^{\left(2\right)}=2.4$,
$w_0^{\left(3\right)}=.8$, and $w_0^{\left(4\right)}=1.6$.}
\label{figure2}
\begin{minipage}{.22 \textwidth}
\begin{equation*}
\begin{spmatrix}{multinomial}
.8 & .8 & .8 & .8 \\
.6 & .6 & .6 & .6 \\
.2 & .2 & .2 & .2 \\
.4 & .4 & .4 & .4 \\
\hline
~ & ~ & ~ & ~ \\
\end{spmatrix}
\end{equation*}
\end{minipage}
\hfill{}
\begin{minipage}{.22 \textwidth}
\begin{equation*}
\begin{spmatrix}{stratified}
2 & 1.2 & ~ & ~ \\
~ & .8 & 1.6 & ~ \\
~ & ~ & .4 & .4 \\
~ & ~ & ~ & 1.6 \\
\hline
~ & ~ & ~ & ~ \\
\end{spmatrix}
\end{equation*}
\end{minipage}
\hfill{}
\begin{minipage}{.22 \textwidth}
\begin{equation*}
\begin{spmatrix}{multinomial residual}
2 & ~ & .6 & .6 \\
~ & 2 & .2 & .2 \\
~ & ~ & .4 & .4 \\
~ & ~ & .8 & .8 \\
\hline
~ & ~ & ~ & ~ \\
\end{spmatrix}
\end{equation*}
\end{minipage}
\hfill{}
\begin{minipage}{.22 \textwidth}
\begin{equation*}
\begin{spmatrix}{stratified residual}
2 & ~ & 1.2 & ~ \\
~ & 2 & .4 & ~ \\
~ & ~ & .4 & .4 \\
~ & ~ & ~ & 1.6 \\
\hline
~ & ~ & ~ & ~ \\
\end{spmatrix}
\end{equation*}
\end{minipage}
\end{figure}
\endgroup

\begin{remark}
While Proposition \ref{decomposition} requires that functions $\left(G_t\right)_{t \geq 0}$ are bounded,
this assumption can be lifted, at the cost of greater complexity.
Using methods to be presented in Section \ref{asymtheory},
it can be shown that mutation error converges in distribution
\begin{equation*}
\label{asymnormal}
\frac{1}{\sqrt{N_0}}\left(M_{t+1} - M_{t+\frac{1}{2}}\right) \stackrel{\mathcal{D}}{\rightarrow} 
N\left(0, \E\left[\prod_{s=0}^t G_s\right]
\E\left[\prod_{s=0}^t G_s \Var\left[\left.G_{t+1} h_{t+1}\right| X_t\right]\right]\right)
\end{equation*}
whenever the asymptotic variance is finite.
The asymptotic distribution does not depend on which resampling scheme is used.
\end{remark}

\begin{remark}
Similar to the examples above,
\citet{douc2005comparison} compared resampling variance
between different resampling schemes.
But while \cite{douc2005comparison} used explicit resampling variance calculations, 
the resampling matrix framework provides a quicker route to comparing schemes.
In the examples above, it is enough simply to compare columns between resampling matrices
and apply Lemma \ref{simplelemma} to obtain a rigorous error comparison.
\end{remark}

% Consider a better illustration of stratified and stratified residual resampling.

% -----------------------------------------------------------------------------

\subsection{Minimizing resampling variance}{\label{sort}}

The goal of SMC is to compute a quantity $\E\left[\prod_{s=t+1}^{T-1} G_s f\right]$ with minimal error.
Sections \ref{efficiency} and \ref{factors} have demonstrated that the error
of an estimate depends critically on the resampling variance.
Thus, it is of foremost concern to find resampling schemes
that minimize resampling variance.

%even more difficult than the original problem of computing a quantity $\E\left[\prod_{t=0}^{T-1} G_t f\right]$.
%Additionally, if SMC is used to produce estimates $\E\left[\prod_{t=0}^{T-1} G_t f\right]$
%for different times $T$ and functions $f$,
%then a different conditional expectation function $h_t$ will exist for each different SMC estimate.

%Despite the difficulty of computing the conditional expectation function $h_t$,
%it may still be possible to find some coordinate $\theta_t$ that is 
%diverse authors have leveraged available knowledge of conditional expectations $h_t$ 
%in order to improve the quality of resampling \citep{kitagawa1996monte, gerber2017negative}.

%To provide guidance for the best way to resample,
Theorem \ref{optimal} identifies the minimal variance resampling scheme,
a scheme that sorts particles depending on the values
$h_t\left(\xi_t^{\left(i\right)}\right)$:

\begin{thm}{\label{optimal}}
\begin{enumerate}[label = (\alph*)]
\item{\label{complexscheme}}
The following random population scheme minimizes resampling variance $\hat{V}_t^2\left[h_t\right]$:
%when all values $h_t\left(\xi_t^{\left(i\right)}\right)$ are known:
\begin{enumerate}[label = \arabic*.]
\item
Add one particle 
$\left(w_t^{\left(N_t + 1\right)}, \xi_t^{\left(N_t + 1\right)}\right) = \left(\overline{w}_t, c\right)$
to the ensemble
$\left(w_t^{\left(j\right)}, \xi_t^{\left(j\right)}\right)_{1 \leq j \leq N_t}$.
\item
Sort the ensemble $\left(w_t^{\left(j\right)}, \xi_t^{\left(j\right)}\right)_{1 \leq j \leq N_t + 1}$ 
from highest to lowest by the value of $h_t\left(\xi_t^{\left(j\right)}\right)$
so that
\begin{equation*}
h_t\left(\xi_t^{\left(1\right)}\right) \geq h_t\left(\xi_t^{\left(2\right)}\right)
\geq \cdots \geq h_t\left(\xi_t^{\left(N_t\right)}\right) \geq h_t\left(\xi_t^{\left(N_t + 1\right)}\right)
\end{equation*}
\item
Apply stratified resampling.
\end{enumerate}
\item{\label{simplescheme}}
The fixed population scheme that minimizes 
resampling variance  $\hat{V}_t^2\left[h_t\right]$
is a simpler version of the scheme in part \ref{complexscheme}.
First sort particles from highest to lowest by the value of $h_t\left(\xi_t^{\left(i\right)}\right)$
and then apply stratified resampling.
\end{enumerate}
\end{thm}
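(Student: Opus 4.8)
The plan is to reduce the minimization of $\hat V_t^2[h_t]$ to a combinatorial optimization over resampling matrices, using the representation from Lemma \ref{simplelemma}\ref{representation1}, and then to show that the ``sort-then-stratify'' scheme achieves the optimum. By Lemma \ref{simplelemma}\ref{representation1}, since the term $\frac{\overline w_t}{N_0^2}\sum_i w_t^{(i)}|h_t(\xi_t^{(i)})|^2$ does not depend on $W_t$, minimizing resampling variance is equivalent to \emph{maximizing} the quadratic form $h_t^T W_t W_t^T h_t = \sum_{j} \bigl(\sum_i w_t^{(i,j)} h_t^{(i)}\bigr)^2$ over all admissible resampling matrices $W_t$ (those with the prescribed row sums $w_t^{(i)}$ for $i\le N_t$, row sum $\overline w_t$ for the coffin row in the random-population case, and nonnegative entries, where in the complete case every column sums to $\overline w_t$). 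So the whole theorem is a statement about which admissible matrix maximizes $\sum_j \langle c_j, h_t\rangle^2$ where $c_j$ ranges over the columns.

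The key steps, in order. First, I would invoke Lemma \ref{simplelemma}\ref{representation3}: if two columns of an optimal matrix are not ``aligned'' — i.e., if replacing a block of columns by their common average would not change the objective — then the objective is a concave (indeed linear-plus-concave) function, so among matrices with the same column sums the optimum is attained at an extreme point, which forces each column to be as ``concentrated'' as possible. More precisely, for a single column $c$ with fixed sum $s$ supported on particle indices, $\langle c, h_t\rangle^2$ is maximized (subject to the row-sum budget being shared across columns) by having $c$ put all of its mass on the particles with the most extreme values of $h_t$; spreading mass across particles with different $h_t$-values is wasteful. This is the heart of why sorting by $h_t\left(\xi_t^{(i)}\right)$ is optimal: once particles are sorted in decreasing order of $h_t$, an optimal matrix can be taken to have its nonzero entries forming a staircase path going rightward and downward — exactly the structure of the stratified resampling matrix (as noted in the Stratified resampling example and visible in Figure \ref{figure2}). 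Second, I would verify that stratified resampling applied to the sorted ensemble is itself admissible (correct row sums, complete) and that it realizes this staircase structure, so it attains the bound. For part \ref{complexscheme}, the extra coffin particle of weight $\overline w_t$ is appended with $h_t(c)=0$ and placed last in the sorted order (since any genuine $h_t$-value could be negative, one must be careful: the coffin value $0$ sits wherever it falls in the sorted order — I'd handle the sign issue by noting the quadratic form depends on $h_t$ only through $|h_t^{(i)}|$-type contributions after the staircase reduction, or more carefully, by splitting into the part of the mass routed through columns with $\langle c_j,h_t\rangle>0$ and $<0$); this gives the random-population optimum over \emph{all} matrix resampling schemes. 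For part \ref{simplescheme}, the same argument runs without the appended particle, giving the fixed-population optimum.

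The main obstacle I anticipate is making the extremal-column argument fully rigorous: Lemma \ref{simplelemma} tells us the objective is concave in $W_t$, so a maximum over the (polytope of) admissible matrices is attained at a vertex, but I need to characterize the vertices and show that the staircase / stratified structure is among the optimal vertices, and that sorting by $h_t$ is what makes that particular vertex optimal rather than merely feasible. The cleanest route is probably an exchange/rearrangement argument: starting from any admissible $W_t$, show that (i) by the column-averaging reduction of Lemma \ref{simplelemma}\ref{representation3} one may assume each column concentrates its mass, and (ii) a rearrangement-inequality step shows that routing the largest available weights toward the columns producing the largest $|\langle c_j, h_t\rangle|$ can only increase $\sum_j \langle c_j, h_t\rangle^2$, and this is exactly achieved by sorting and then stratifying. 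Handling the coffin particle's zero value and possible negative $h_t$-values inside this rearrangement is the fiddly part, but it does not affect the structure of the argument.
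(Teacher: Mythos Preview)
Your proposal is essentially the same approach as the paper's: reduce via Lemma \ref{simplelemma}\ref{representation1} to maximizing $h_t^T W_t W_t^T h_t$, then use an exchange/rearrangement argument to show the sorted stratified matrix is optimal. Two execution details the paper supplies that you leave vague are worth knowing. First, for the sign issue the paper splits by \emph{row} (sign of $h_t^{(i)}$), not by column inner product: it replaces $W_t$ by the wider matrix $S_t=(P_t\;Q_t)$ where $P_t$ keeps only the rows with $h_t^{(i)}\ge 0$ (coffin row absorbing the rest of each column) and $Q_t$ the rows with $h_t^{(i)}<0$; one checks $h_t^T W_t W_t^T h_t\le h_t^T S_t S_t^T h_t$, and then the exchange argument can be run separately on $P_t$ and $Q_t$ with all entries of $h_t$ of one sign. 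The exchange itself is a concrete ``problematic quadruplet'' swap: whenever mass sits at positions $(i,j+\ell)$ and $(i+k,j)$ with $h_t^{(i)}\ge h_t^{(i+k)}$ and column $j$ already having the larger inner product, move $\alpha=\min\{p^{(i,j+\ell)},p^{(i+k,j)}\}$ from those two entries to $(i,j)$ and $(i+k,j+\ell)$; this never decreases the objective and terminates at the staircase form. Second, for part \ref{simplescheme} the paper does \emph{not} rerun the argument without the coffin particle; instead it observes that for fixed-population schemes $\hat V_t^2[h_t]=\hat V_t^2[h_t+c]$ for any constant $c$, so one may shift to make all non-coffin $h_t$-values positive, whereupon the coffin particle (value $0$) sits last in the sorted order and the schemes in \ref{complexscheme} and \ref{simplescheme} coincide.
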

\begin{proof}
Assume particles have been sorted so that
$h_t\left(\xi_t^{\left(1\right)}\right) \geq h_t\left(\xi_t^{\left(2\right)}\right) \geq \cdots \geq h_t\left(\xi_t^{\left(N_t\right)}\right)$
and consider an arbitrary resampling matrix $W_t$.
By Lemma \ref{simplelemma},
the resampling variance is decreased if $h_t^T W_t W_t^T h_t$ is increased.

As a first step toward increasing $h_t^T W_t W_t^T h_t$,
define $P_t$ and $N_t$ by 
\begin{equation*}
\begin{cases}
p_t^{\left(i, j\right)} = w_t^{\left(i, j\right)} \mathds{1}\left\{h_t\left(\xi_t^{\left(j\right)}\right) \geq 0\right\}, 
& 1 \leq i \leq N_t \\
q_t^{\left(i, j\right)} = w_t^{\left(i, j\right)} \mathds{1}\left\{h_t\left(\xi_t^{\left(j\right)}\right) < 0\right\}, 
& 1 \leq i \leq N_t
\end{cases}
\end{equation*}
and
\begin{equation*}
\begin{cases}
p_t^{\left(N_t + 1, j\right)} = \overline{w}_t - \sum_{k=1}^{N_t} p_t^{\left(k, j\right)} \\
q_t^{\left(N_t + 1, j\right)} = \overline{w}_t - \sum_{k=1}^{N_t} q_t^{\left(k, j\right)}
\end{cases}
\end{equation*}
Then set $S_t = \begin{pmatrix} P_t & Q_t \end{pmatrix}$  and observe that 
$h_t^T W_t W_t^T h_t  \leq h_t^T S_t S_t^T h_t$.

Let $c^{\left(1\right)}, c^{\left(2\right)}, \ldots, c^{\left(N_{t+1}\right)}$ denote the columns of $P_t$, 
sorted so that 
$h_t^T c^{\left(1\right)} \geq h_t^T c^{\left(2\right)} \geq \cdots \geq h_t^T c^{\left(N_{t+1}\right)} \geq 0$.
Consider the following algorithm to increase the value of $h_t^T P_t P_t^T h_t$:
\begin{enumerate}
\item
Call a quadruplet $\left(i,j,k,\ell\right)$
a problematic quadruplet
if $p_t^{\left(i,j+\ell\right)}>0$ and $p_t^{\left(i+k,j\right)}>0$
and if $i + k \leq N_t$.
Choose a problematic quadruplet with $i$ as small as possible.
If there is more than one such quadruplet,
choose one with $j$ as small as possible.
\item{\label{keystep}}
Set $\alpha=\min\left\{ p_t^{\left(i,j+\ell\right)},p_t^{\left(i+k,j\right)}\right\}$
and update the entries of $P_t$ with
\begin{align*}
p_t^{\left(i, j\right)} &= p_t^{\left(i, j\right)} + \alpha&
p_t^{\left(i, j+\ell\right)} &= p_t^{\left(i, j+\ell\right)} - \alpha\\
p_t^{\left(i+k, j\right)} &= p_t^{\left(i, j\right)} - \alpha&
p_t^{\left(i+k, j+\ell\right)} &= p_t^{\left(i, j+\ell\right)} + \alpha
\end{align*}
\item{\label{otherstep}}
If necessary, resort the columns 
$c^{\left(j + \ell\right)},  c^{\left(j + \ell + 1\right)}, \ldots,  c^{\left(N_{t+1}\right)}$
to ensure that $h_t^T c^{\left(j + \ell\right)} \geq h_t^T c^{\left(j + \ell + 1\right)} \geq \cdots \geq h_t^T c^{\left(N_{t+1}\right)}$.
\end{enumerate}

Note that step \ref{keystep} of the algorithm increases $h_t^T P_t^T P_t h_t$ 
or leaves $h_t^T P_t^T P_t h_t$ unchanged,
while step \ref{otherstep} ensures that
$h_t^T c^{\left(1\right)} \geq h_t^T c^{\left(2\right)} \geq \cdots \geq h_t^T c^{\left(N_{t+1}\right)} \geq 0$.
After repeated applications of the algorithm,
all the problematic quadruplets involving column
$c^{\left(1\right)}$ will eventually be gone
and the same too with columns $c^{\left(2\right)}$, $c^{\left(3\right)}$, etc.
Eventually, the algorithm will have no more problematic quadruplets to correct.
A similar algorithm can be applied to increase the value of $N_t$.
On examination it is seen that the resulting matrix $\begin{pmatrix} P_t & Q_t \end{pmatrix}$ 
generates the same resampling scheme as described in part \ref{complexscheme}.

The proof of part \ref{simplescheme} is similar.
Because fixed population schemes satisfy
$\hat{V}_t^2\left[h_t\right] = \hat{V}_t^2\left[h_t + c\right]$ for $c \in \mathbb{R}$,
it can be assumed without loss of generality 
that $h_t\left(\xi_t^{\left(i\right)}\right) > 0$ for all $\xi_t^{\left(i\right)} \neq c$.
But in this case, the schemes in part \ref{complexscheme} and part \ref{simplescheme} are identical,
and sorted stratified resampling represents the best possible strategy.
%See Figure \ref{figure3}, which provides resampling matrices for schemes in parts \ref{complexscheme} and \ref{simplescheme}.
\end{proof}

%\begingroup\abovedisplayskip=0pt\belowdisplayskip=0pt
%\begin{figure}[!htbp]
%\caption{Examples of resampling matrices $W_0$ when $N_0 = 4$ and $N_1 = 5$. 
%Particles have weights $w_0^{\left(1\right)}=3.2$, $w_0^{\left(2\right)}=2.4$,
%$w_0^{\left(3\right)}=.8$, and $w_0^{\left(4\right)}=1.6$.
%The function $h_t$ satisfies 
%$h_t\left(\xi_t^{\left(1\right)}\right) > h_t\left(\xi_t^{\left(2\right)}\right)
%> 0 > h_t\left(\xi_t^{\left(3\right)}\right) > h_t\left(\xi_t^{\left(4\right)}\right)$.}
%\label{figure3}
%\begin{minipage}{.32 \textwidth}
%\begin{equation*}
%\begin{spmatrix}{scheme in Theorem \ref{optimal} \ref{complexscheme}}
%2 & 1.2 & ~ & ~ & ~ \\
%~ & .8 & 1.6 & ~ & ~ \\
%~ & ~ & ~ & .4 & .4 \\
%~ & ~ & ~ & ~ & 1.6 \\
%\hline
%~ & ~ & .4 & 1.6 & ~ \\
%\end{spmatrix}
%\end{equation*}
%\end{minipage}
%\hfill{}
%\begin{minipage}{.32 \textwidth}
%\begin{equation*}
%\begin{spmatrix}{scheme in Theorem \ref{optimal} \ref{simplescheme}}
%2 & 1.2 & ~ & ~ & ~ \\
%~ & .8 & 1.6 & ~ & ~ \\
%~ & ~ & .4 & .4 & ~ \\
%~ & ~ & ~ & 1.6 & ~ \\
%\hline
%~ & ~ & ~ & ~ & 2 \\
%\end{spmatrix}
%\end{equation*}
%\end{minipage}
%\hfill{}
%\begin{minipage}{.32 \textwidth}
%\begin{equation*}
%\begin{spmatrix}{scheme in Theorem \ref{statistical}}
%2 & 1.2 & ~ & ~ & ~ \\
%~ & ~ & 2 & .4 & ~ \\
%~ & .8 & ~ & ~ & ~ \\
%~ & ~ & ~ & 1.6 & ~ \\
%\hline
%~ & ~ & ~ & ~ & 2 \\
%\end{spmatrix}
%\end{equation*}
%\end{minipage}
%\end{figure}
%\endgroup

The optimal scheme identified in Theorem \ref{optimal}
is an example of a \emph{sorting} scheme.
In more general sorting schemes,
particles can be sorted using any real-valued coordinate $\theta_t \colon E_t \rightarrow \mathbb{R}$
and then stratified resampling or stratified residual resampling can be used.
Sorting schemes have a long history
dating back at least to \citet{madow1944theory}.
Sorting schemes can lead to a beneficial stratification effect.
Each particle $\hat{\xi}_t^{\left(j\right)}$
is drawn from a subset of particles for which $h_t\left(\xi_t^{\left(i\right)}\right)$ values
are similar, thereby reducing resampling variance.
Theorem \ref{optimal} indicates that the best possible coordinate for sorting is $\theta_t = h_t$.
This is the first known result 
which proves that sorting particles can produce an optimal resampling scheme.

The optimal scheme in Theorem \ref{optimal} can be difficult to implement exactly,
because the function
$h_t\left(x_t\right) = \E\left[\left.\prod_{s=t+1}^{T-1} G_s f\right| X_t = x_t\right]$.
can be challenging to compute.
However, $h_t$ is not the only coordinate for sorting particles
that can lead to an effective resampling scheme.
The error formulas of the next section show that effective sorting
is possible with a wide range of different coordinates $\theta_t$.

\subsection{Asymptotic error}{\label{asymtheory}}

%This section presents asymptotic error formulas that can be used to rigorously
%compare SMC estimates
%made using different resampling schemes.
%Previously known formulas
%are integrated with new asymptotic error bounds
%for stratified and stratified residual resampling schemes.
%A discussion highlights the dramatic error reduction
%that is possible using stratified or stratified residual resampling
%when particles are sorted appropriately.

In past work \cite{del2004feynman, chopin2004central, douc2008limit},
a central tool for for analyzing SMC error has been 
Central Limit Theorems (CLTs) of the form
\begin{equation*}
\sqrt{N_0}\left(\frac{
\overline{w}_{T-1}}{N_0}\sum_{i=1}^{N_T}
f\left(\hat{\xi}_{T-1}^{\left(i\right)},\xi_{T}^{\left(i\right)}\right)
- \E\left[\prod_{t=0}^{T-1} G_t f\right]\right)
\stackrel{\mathcal{D}}{\rightarrow} N\left(0, \eta^2\right)
\end{equation*}
%In a CLT, the asymptotic error of an estimate
%is determined by
%the asymptotic variance $\eta^2$.
where the quantity $\eta^2$ depends on the particular resampling scheme that is used.
CLTs have been proved for multinomial, multinomial residual and Bernoulli resampling
\cite{del2004feynman, chopin2004central, douc2008limit}.
In the present section, new error formulas are presented for stratified resampling and stratified residual resampling.
These error formulas are not CLTs;
instead they are upper bounds on \emph{asymptotic error}.
Asymptotic error is a new way to measure error that is more general than a CLT
and also more flexible for analysis.
Before presenting asymptotic error formulas,
it is therefore necessary to introduce the key features
of asymptotic error and explain how this error measurement tool can be interpreted.

Asymptotic error is a far-reaching generalization of the error rate in a CLT.
In a CLT, a sequence of random variables $\left(Y_n\right)_{n \geq 1}$ 
approach a constant $c$ with error measured by an error rate $U_n$.
\begin{equation*}
\frac{Y_n - c}{U_n} \stackrel{\mathcal{D}}{\rightarrow} N\left(0, 1\right)
\end{equation*}
Thus, a CLT can only be proved when there is very precise knowledge of the error rate $U_n$.
In contrast, asymptotic error can be analyzed when knowledge of $U_n$ is less precise
and there is only an upper or lower bound on $U_n$.
A full definition of asymptotic error is provided below:
\begin{definition}{\label{def:asymerror}}
Suppose random variables $\left(Y_n\right)_{n \geq 1}$ satisfy
\begin{equation*}
\label{geq}
\liminf_{n \rightarrow \infty} \E\left[\mathds{1}_{A_n} \left|\frac{Y_n - c}{U_n}\right|^2\right] \geq 1
\end{equation*}
for all possible sequences of sets $\left(A_n\right)_{n \geq 1}$ with $\Prob\left(A_n\right) \rightarrow 1$.
Then, $Y_n$ converges to $c$ with asymptotic error greater than or equal to $U_n$,
and write $\left|Y_n - c\right| \gtrsim U_n$.

Suppose random variables $\left(Y_n\right)_{n \geq 1}$ satisfy
\begin{equation*}
\label{leq}
\limsup_{n \rightarrow \infty} \E\left[\mathds{1}_{B_n} \left|\frac{Y_n - c}{U_n}\right|^2\right] \leq 1
\end{equation*}
for some sequence of sets $\left(B_n\right)_{n \geq 1}$ with $\Prob\left(B_n\right) \rightarrow 1$.
Then, $Y_n$ converges to $c$ with asymptotic error less than or equal to
$U_n$, and write $\left|Y_n - c\right| \lesssim U_n$.

If both conditions are satisfied, $Y_n$ converges to $c$ with asymptotic error $U_n$, and write $\left|Y_n - c\right| \sim U_n$.
\end{definition}

A CLT can be viewed as a particular example
of asymptotic error,
as guaranteed by the following lemma:

\begin{lem}{\label{guarantee}}
Suppose random variables $\left(Y_n\right)_{n \geq 1}$ satisfy
$\frac{Y_n - c}{U_n} \stackrel{\mathcal{D}}{\rightarrow} N\left(0, 1\right)$
as $n \rightarrow \infty$.
Then, $\left|Y_n - c\right| \sim U_n$.
\end{lem}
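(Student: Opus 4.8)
The plan is to unpack Definition~\ref{def:asymerror} and verify its two halves separately. Write $Z_n = (Y_n - c)/U_n$; by hypothesis $Z_n$ converges in distribution to a standard Gaussian $Z$, and $\E[Z^2] = 1$. I would first establish the lower bound $|Y_n - c| \gtrsim U_n$, then the upper bound $|Y_n - c| \lesssim U_n$, and conclude $|Y_n - c| \sim U_n$.

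For the lower bound, fix a truncation level $M > 0$ and an arbitrary sequence $(A_n)$ with $\Prob(A_n) \to 1$. The idea is to discard the indicator at a bounded cost: since $z \mapsto z^2 \wedge M$ is bounded and continuous, convergence in distribution gives $\E[Z_n^2 \wedge M] \to \E[Z^2 \wedge M]$, while
\begin{equation*}
\E\!\left[\mathds{1}_{A_n} Z_n^2\right] \;\geq\; \E\!\left[\mathds{1}_{A_n}\bigl(Z_n^2 \wedge M\bigr)\right] \;\geq\; \E\!\left[Z_n^2 \wedge M\right] - M\,\Prob\!\left(A_n^c\right).
\end{equation*}
Since $\Prob(A_n^c) \to 0$, taking $\liminf$ gives $\liminf_n \E[\mathds{1}_{A_n} Z_n^2] \geq \E[Z^2 \wedge M]$; then letting $M \to \infty$ and invoking monotone convergence produces the bound $\geq \E[Z^2] = 1$. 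As $(A_n)$ was arbitrary, this is exactly the condition defining $|Y_n - c| \gtrsim U_n$.

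For the upper bound I would use truncation sets $B_n = \{|Z_n| \leq a_n\}$ with a radius $a_n \uparrow \infty$ selected by a diagonal argument. For each fixed $a > 0$ the map $z \mapsto z^2\mathds{1}\{|z| \leq a\}$ is bounded and is continuous except at $\pm a$, a set of measure zero for the (atomless) law of $Z$, so the mapping theorem yields $\E[Z_n^2 \mathds{1}\{|Z_n| \leq a\}] \to \E[Z^2 \mathds{1}\{|Z| \leq a\}] \leq 1$ and also $\Prob(|Z_n| \leq a) \to \Prob(|Z| \leq a)$. Then, choosing for each $k$ an index $n_k$, increasing in $k$, beyond which both $\E[Z_n^2 \mathds{1}\{|Z_n| \leq k\}] < 1 + 1/k$ and $\Prob(|Z_n| \leq k) > 1 - 1/k$ hold, and putting $a_n = k$ for $n_k \leq n < n_{k+1}$, one obtains $\Prob(B_n) \to 1$ together with $\limsup_n \E[\mathds{1}_{B_n} Z_n^2] \leq 1$, which is the condition defining $|Y_n - c| \lesssim U_n$.

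The main obstacle is the upper bound, and the reason is that convergence in distribution carries no information about the second moments $\E[Z_n^2]$: they may diverge, so the choice $B_n = \Omega$ is not allowed. The delicate point is to send the truncation radius $a_n$ to infinity slowly enough that $\E[Z_n^2\mathds{1}\{|Z_n| \leq a_n\}]$ still converges up to $\E[Z^2] = 1$, yet fast enough that $\Prob(|Z_n| \leq a_n) \to 1$; the diagonal construction reconciles these two requirements. The fact that the truncation function has two discontinuities is a minor nuisance, handled by observing that $Z$ has a density, so these points carry no mass.
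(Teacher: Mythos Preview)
Your proof is correct and follows essentially the same truncation strategy as the paper. For the lower bound the paper invokes Fatou's lemma in one line, which your argument with $z^2 \wedge M$ effectively unpacks; for the upper bound the paper defines the truncation radius intrinsically as the largest $L_n$ with $\E\bigl[Z_n^2\,\mathds{1}\{|Z_n| < L_n\}\bigr] \leq 1$ rather than building it via a diagonal extraction, but the underlying idea---send the truncation level to infinity slowly enough---is identical.
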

\begin{proof}
Fatou's Lemma shows
$\liminf_{n \rightarrow \infty} \E\left[\mathds{1}_{A_n} \left|\frac{Y_n - c}{U_n}\right|^2\right] \geq 1$ 
for all sequences $\left(A_n\right)_{n \geq 1}$ with $\Prob\left(A_n\right) \rightarrow 1$.
Thus, $\left|Y_n - c\right| \gtrsim U_n$.
To show $\left|Y_n - c\right| \lesssim U_n$, construct a sequence $\left(B_n\right)_{n \geq 1}$ 
with the properties $\Prob\left(B_n\right) \rightarrow 1$ and $limsup_{n \rightarrow \infty} \E\left[\mathds{1}_{B_n}  \left|\frac{Y_n-c}{U_n}\right|^2\right] \leq 1$.
First let $L_n$ be the largest number such that 
\begin{equation*}
\E\left[\mathds{1}\left\{\left|\frac{Y_n - c}{U_n}\right| < L_n\right\} \left|\frac{Y_n-c}{U_n}\right|^2\right] \leq 1
\end{equation*}
and note that $L_n$ is well-defined by the Monotone Convergence Theorem.
Set $B_n = \left\{\frac{\left|Y_n - c\right|}{U_n} < L_n\right\}$.
For any $\epsilon > 0$,
choose $M > 0$ large enough that 
$\Prob\left\{\left|Z\right| < M\right\} > 1 - \frac{\epsilon}{2}$ where $Z \sim N\left(0, 1\right)$.
Since $\frac{Y_n - c}{U_n} \stackrel{\mathcal{D}}{\rightarrow} N\left(0, 1\right)$,
it follows that
$\Prob\left\{\frac{\left|Y_n - c\right|}{U_n} < M\right\} > 1 - \epsilon$
for all $n$ large enough.
Since $x \mapsto \mathds{1}\left\{\left|x\right| < M\right\}\left|x\right|^2$
is bounded and piecewise continuous, 
\begin{equation*}
\E\left[\mathds{1}\left\{\left|\frac{Y_n - c}{U_n}\right| < M\right\} \left|\frac{Y_n-c}{U_n}\right|^2\right]
\rightarrow \E\left[\mathds{1}\left\{\left|Z\right| < M\right\} Z^2 \right] < 1
\end{equation*}
For all $n$ large enough, it follows that
$M < L_n$, $\left\{\frac{\left|Y_n - c\right|}{U_n} < M\right\} \subseteq B_n$, and 
$\Prob \left( B_n \right) \geq \Prob\left\{\frac{\left|Y_n - c\right|}{U_n} < M\right\} > 1 - \epsilon$.
Since $\epsilon$ is arbitrary, $\Prob\left(B_n\right) \rightarrow 1$.
\end{proof}

Asymptotic error can be compared to mean squared error,
which is another common error metric, different from the error rate in the CLT.
Both asymptotic error and mean squared error are tools
to assess the value of an estimate and to provide confidence intervals around an estimate.
By Chebyshev's inequality, asymptotic error leads to confidence intervals:
\begin{equation*}
\left|Y_n - c\right| \lesssim U_n \implies
\limsup_{n \rightarrow \infty} \Prob\left\{\left|Y_n - c\right| \geq \epsilon U_n \right\} \leq \frac{1}{\epsilon^2}
\end{equation*}
The chief difference between asymptotic error and mean squared error
is robustness to perturbations.
Mean squared error $\E\left|Y_n - c\right|^2$ is quite sensitive to changes in 
the behavior of $Y_n$ on a set of vanishing probability,
but asymptotic error is completely robust to these changes.
Thus the confidence intervals derived from asymptotic error bounds
can be much tighter than those derived from mean squared error bounds.

The rigorous treatment of asymptotic error 
leads to new results in SMC analysis, 
including the first known error formulas for 
stratified resampling and stratified residual resampling.
In the following theorem,
these new formulas are presented alongside CLTs
for multinomial, multinomial residual, and Bernoulli resampling, 
which are extended
from \cite{del2004feynman, chopin2004central, douc2008limit} to 
have less restrictions on functions $\left(G_t\right)_{t \geq 0}$ and $f$:

\begin{thm}{\label{multvar}}
Assume $\E\left[\prod_{s=0}^t G_s\right] < \infty$ for $0 \leq t \leq T-1$, $\E\left|G_0 h_0\right|^2 < \infty$,
and $\E\left[\prod_{s=0}^t G_s \left|G_{t+1} h_{t+1}\right|^2\right]$ for $0 \leq t \leq T-1$.
If multinomial residual or stratified residual resampling is used, assume $\E\left[\prod_{s=0}^{t-1} G_s \mathds{1}\left\{\tilde{G}_t \in \left\{1,2,\ldots\right\} \right\}\right] = 0$ for $0 \leq t \leq T-1$ as well.
Set
$\tilde{G}_t = \E\left[\prod_{s=0}^{t-1} G_s \right] G_t \slash \E\left[\prod_{s=0}^t G_s\right]$
and set
\begin{equation*}
\eta^2 = \Var\left[G_0 h_0\right] + \sum_{t=0}^{T-1} \hat{\eta}_t^2 \left[h_t\right]
+ \sum_{t=0}^{T-1} \E\left[\prod_{s=0}^t G_s\right] 
\E\left[\prod_{s=0}^t G_s \Var\left[\left.G_{t+1} h_{t+1} \right| X_t\right]\right]
\end{equation*}
where $\eta^2$ depends on a sequence of a numbers $\left(\hat{\eta}_t\left[h_t\right]^2\right)_{0 \leq t \leq T-1}$.

First assume multinomial resampling, Bernoulli resampling, or multinomial residual resampling is used.
Then SMC estimates satisfy the CLT
\begin{equation*}
\sqrt{N_0}\left(\frac{
\overline{w}_{T-1}}{N_0}\sum_{i=1}^{N_T}
f\left(\hat{\xi}_{T-1}^{\left(i\right)},\xi_{T}^{\left(i\right)}\right)
- \E\left[\prod_{t=0}^{T-1} G_t f\right]\right)
\stackrel{\mathcal{D}}{\rightarrow} N\left(0, \eta^2\right)
\end{equation*}
where $\hat{\eta}_t^2\left[h_t\right]$ is determined by the resampling scheme:

\begin{flalign*}
\begin{array}{l l}
\text{multinomial} & 
\left(\E\left[\prod_{s=0}^t G_s\right]\right)^2
\min_{c \in \mathbb{R}}
\E\left[\prod_{s=0}^t \tilde{G_s} 
\left|h_t - c \right|^2\right] 
\\[.3cm]
\text{multinomial residual} &
\left(\E\left[\prod_{s=0}^t G_s\right]\right)^2
\min_{c \in \mathbb{R}}
\E\left[\prod_{s=0}^{t-1} \tilde{G}_s \left\{\tilde{G}_t\right\} \left|h_t - c\right|^2 \right]
\\[.3cm]
\text{Bernoulli} &
\left(\E\left[\prod_{s=0}^t G_s\right]\right)^2
\E\left[\prod_{s=0}^{t-1} \tilde{G}_s \left\{\tilde{G}_t\right\}\left(1 - \left\{\tilde{G}_t\right\}\right) 
h_t^2\right] \end{array}
&&
\end{flalign*}

Next, assume that at each resampling step particles are sorted by a coordinate $\theta_t$
and then stratified or stratified residual resampling is used.
Then, 
\begin{equation*}
\left|\frac{\overline{w}_{T-1}}{N_0}\sum_{i=1}^{N_T}
f\left(\hat{\xi}_{T-1}^{\left(i\right)},\xi_{T}^{\left(i\right)}\right)
- \E\left[\prod_{t=0}^{T-1} G_t f\right]\right|
\lesssim \frac{\eta}{\sqrt{N_0}}
\end{equation*}
where $\hat{\eta}_t^2\left[h_t\right]$ is determined by the resampling scheme:
\begin{flalign*}
\begin{array}{l l}
\text{stratified} &
\left(\E\left[\prod_{s=0}^t G_s\right]\right)^2
\min_{p\colon \mathbb{R} \rightarrow \mathbb{R}}
\E \left[\prod_{s=0}^t \tilde{G}_s \left|h_t - p \left(\theta_t\right)\right|^2\right]
\\[.3cm]
\text{stratified residual} &
\left(\E\left[\prod_{s=0}^t G_s\right]\right)^2
\min_{p\colon \mathbb{R} \rightarrow \mathbb{R}}
\E \left[\prod_{s=0}^{t - 1} \tilde{G}_s \left\{\tilde{G}_t\right\} \left|h_t - p \left(\theta_t\right)\right|^2\right]
\end{array} &&
\end{flalign*}
\end{thm}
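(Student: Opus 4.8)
The proof is organized around the martingale $M$ of Theorem~\ref{unbiased} and the error decomposition
\[
\frac{\overline{w}_{T-1}}{N_0}\sum_{i=1}^{N_T} f\left(\hat{\xi}_{T-1}^{\left(i\right)},\xi_T^{\left(i\right)}\right) - \E\left[\prod_{t=0}^{T-1} G_t f\right]
= \left(M_0 - M_{-1}\right) + \sum_{t=0}^{T-1}\left(M_{t+\frac12}-M_t\right) + \sum_{t=0}^{T-1}\left(M_{t+1}-M_{t+\frac12}\right),
\]
whose summands form a martingale difference sequence for the filtration $\mathcal{F}_{-1}\subset\mathcal{F}_0\subset\mathcal{F}_{\frac12}\subset\mathcal{F}_1\subset\cdots\subset\mathcal{F}_T$. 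The plan is to compute, for each of these finitely many increments $D_k$, the $N_0$-scaled conditional variance $N_0\,\E\left[D_k^2\mid\mathcal{F}_{k-1}\right]$. The initialization increment contributes exactly $\Var\left[G_0 h_0\right]\slash N_0$, while the mutation increment at step $t$ has conditional variance $\tfrac{\overline{w}_t^2}{N_0^2}\sum_i\Var\left[\left.G_{t+1}h_{t+1}\right|X_t=\hat{\xi}_t^{\left(i\right)}\right]$, and --- just as in the proof of Proposition~\ref{decomposition}, extended to unbounded $G_t$ by the truncation argument indicated in the remark following it --- $N_0$ times this converges in probability to $\E\left[\prod_{s=0}^t G_s\right]\E\left[\prod_{s=0}^t G_s\Var\left[\left.G_{t+1}h_{t+1}\right|X_t\right]\right]$. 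These two families of terms are scheme-independent, so the crux is the resampling increments, whose conditional variance is $\hat{V}_t^2\left[h_t\right]$.

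For multinomial, Bernoulli and multinomial residual resampling, $\hat{V}_t^2\left[h_t\right]$ is the variance of a sum of conditionally independent terms and can be written in closed form --- for multinomial this is exactly the quadratic form of Lemma~\ref{simplelemma} with all columns of $W_t$ equal. In each case $N_0\hat{V}_t^2\left[h_t\right]$ is an empirical average, over the particles, of a function of $\xi_t^{\left(i\right)}$, the weight $w_t^{\left(i\right)}$, and the ratio $w_t^{\left(i\right)}\slash\overline{w}_t$; for Bernoulli the relevant function $x\mapsto\left\{x\right\}\left(1-\left\{x\right\}\right)$ is continuous, whereas for multinomial residual the function contains the discontinuous $\left\{x\right\}$, which is precisely why a deterministic limit here requires the hypothesis $\E\left[\prod_{s=0}^{t-1} G_s\,\mathds{1}\left\{\tilde{G}_t\in\left\{1,2,\ldots\right\}\right\}\right]=0$. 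Using the standard SMC weak laws --- $\tfrac1{N_0}\sum_i w_t^{\left(i\right)}\phi\left(\xi_t^{\left(i\right)}\right)\stackrel{\Prob}{\rightarrow}\E\left[\prod_{s=0}^t G_s\phi\right]$, $\overline{w}_t\stackrel{\Prob}{\rightarrow}\E\left[\prod_{s=0}^t G_s\right]$, and $\tfrac1{N_0}\sum_i\psi\left(\hat{\xi}_{t-1}^{\left(i\right)},\xi_t^{\left(i\right)}\right)\stackrel{\Prob}{\rightarrow}\E\left[\prod_{s=0}^{t-1} G_s\psi\right]\slash\E\left[\prod_{s=0}^{t-1} G_s\right]$, all of which follow from Theorem~\ref{weak} applied at intermediate times --- together with the telescoping identity $\prod_{s=0}^t\tilde{G}_s=\prod_{s=0}^t G_s\slash\E\left[\prod_{s=0}^t G_s\right]$, I would verify that $N_0\hat{V}_t^2\left[h_t\right]\stackrel{\Prob}{\rightarrow}\hat{\eta}_t^2\left[h_t\right]$ with $\hat{\eta}_t^2\left[h_t\right]$ as stated; the minimizations over $c$ are just the variance of $h_t$ against the relevant weighted probability measure. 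Since all conditional variances then converge in probability to deterministic constants summing to $\eta^2$, the martingale central limit theorem for triangular arrays yields the stated CLT, the Lindeberg condition being verified by truncating $h_t$ and $G_{t+1}h_{t+1}$ in $L^2$ (which is where the second-moment hypotheses enter, via $\left|h_t\right|^2\leq\E\left[\left.\left|G_{t+1}h_{t+1}\right|^2\right|X_t\right]$).

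For stratified and stratified residual resampling, $\hat{V}_t^2\left[h_t\right]$ is the stratified-sampling variance of the step function $u\mapsto h_t\left(Q_t\left(u\right)\right)$ over equal strata $I_j$, so $N_0\hat{V}_t^2\left[h_t\right]=\tfrac{\overline{w}_t^2}{N_0}\sum_j\Var_{I_j}\left[h_t\circ Q_t\right]$ (with an extra factor $R_t\slash N_0$ and the residual weights in the residual case). The key estimate is that for any \emph{continuous} $p$, comparing each $\Var_{I_j}$ to the constant equal to the stratum average of $p\left(\theta_t\left(Q_t\right)\right)$ and using that $u\mapsto\theta_t\left(Q_t\left(u\right)\right)$ is monotone, hence of bounded variation, while $p$ is uniformly continuous on its range, gives
\[
\frac1{N_0}\sum_j\Var_{I_j}\left[h_t\circ Q_t\right]\leq\int_0^1\left|h_t\left(Q_t\left(u\right)\right)-p\left(\theta_t\left(Q_t\left(u\right)\right)\right)\right|^2\,du+o\left(1\right),
\]
where the error is a sum of within-stratum oscillations of $p\circ\theta_t\circ Q_t$ that vanishes because the total variation is bounded and each stratum has width $1\slash N_0$. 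The integral converges in probability to $\E\left[\prod_{s=0}^t\tilde{G}_s\left|h_t-p\left(\theta_t\right)\right|^2\right]$ (in the residual case the residual block carries the weights $\left\{\tilde{G}_t\right\}$, and here again the integer-avoidance hypothesis is needed to pass the discontinuous $\left\{\cdot\right\}$ through the limit), so infimizing over a sequence of continuous $p$ approaching the $L^2$-optimal choice gives $\limsup_{N_0}\Prob\left\{N_0\hat{V}_t^2\left[h_t\right]>\hat{\eta}_t^2\left[h_t\right]+\delta\right\}=0$ for every $\delta>0$. Thus the resampling conditional variances are only bounded above in probability --- they need not converge --- so no CLT is available. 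Instead I would introduce the stopping time $\tau_{N_0}$ at which one of the finitely many relevant particle statistics first leaves a $\delta$-neighbourhood of its limit; on $B_{N_0}=\left\{\tau_{N_0}>T\right\}$, which has probability tending to $1$, the process $M_T-M_{-1}$ agrees with the stopped martingale, and since $\left\{\tau_{N_0}\geq k\right\}\in\mathcal{F}_{k-1}$ its $N_0$-scaled second moment is $\sum_k\E\left[\mathds{1}\left\{\tau_{N_0}\geq k\right\}\,N_0\,\E\left[D_k^2\mid\mathcal{F}_{k-1}\right]\right]\leq\eta^2+O\left(\delta\right)$; letting $\delta\rightarrow0$ along a diagonal sequence and appealing to Definition~\ref{def:asymerror} gives the claimed bound $\left|\tfrac{\overline{w}_{T-1}}{N_0}\sum_i f\left(\hat{\xi}_{T-1}^{\left(i\right)},\xi_T^{\left(i\right)}\right)-\E\left[\prod_t G_t f\right]\right|\lesssim\eta\slash\sqrt{N_0}$.

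The main obstacle is the stratified analysis: showing that the interaction between the random, nearly-but-not-exactly equal particle intervals and the exactly equal strata costs only $o\left(1\right)$ when $p\left(\theta_t\right)$ is replaced by its stratum averages, and dealing simultaneously with the discontinuity of $\left\{\cdot\right\}$ in the residual schemes. The remaining technical points --- the $L^2$ truncation behind the Lindeberg condition and behind the extension of Proposition~\ref{decomposition} to unbounded $G_t$, and tracking the normalizing constants through the telescoping identity for $\tilde{G}$ --- are routine.
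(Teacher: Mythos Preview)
Your overall strategy mirrors the paper's: decompose the error along the martingale $M$, identify the limits of the (scaled) conditional variances term by term, and then feed everything into a martingale CLT for the three exact-variance schemes and into the definition of asymptotic error for the two stratified schemes. The specific computations you outline for $N_0\hat{V}_t^2[h_t]$ in the multinomial, Bernoulli and multinomial residual cases, your observation that $x\mapsto\{x\}(1-\{x\})$ is continuous (so Bernoulli needs no integer-avoidance hypothesis), and your total-variation argument for the stratified variance are all essentially the paper's Lemmas~5.5 and~5.6. Your stopping-time construction for the $\lesssim$ bound is a concrete instantiation of the diagonal-set argument the paper packages as Lemma~5.2(b).

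There is, however, one genuine gap in the CLT half. You work with the coarse filtration $\mathcal{F}_{-1}\subset\mathcal{F}_0\subset\mathcal{F}_{1/2}\subset\cdots\subset\mathcal{F}_T$, which yields a martingale array with a \emph{fixed} number $2T+1$ of increments. After scaling by $\sqrt{N_0}$, each increment $\sqrt{N_0}\,D_k$ is $O_P(1)$, not $o_P(1)$, so the Lindeberg condition $\sum_k\E\bigl[N_0D_k^2\mathds{1}\{\sqrt{N_0}|D_k|>\epsilon\}\bigm|\mathcal{F}_{k-1}\bigr]\stackrel{\Prob}{\rightarrow}0$ fails in general; the $L^2$-truncation you mention cannot rescue it at this level of aggregation. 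The paper repairs this by refining the filtration particle-by-particle, inserting $\mathcal{F}_t^{(i)}=\mathcal{F}_t^{(i-1)}\vee\sigma(\hat{\xi}_{t-1}^{(i)},\xi_t^{(i)})$ between $\mathcal{F}_{t-1}$ and $\mathcal{F}_t$, so that the array has $\sum_t N_t\to\infty$ increments, each of size $O_P(N_0^{-1/2})$ individually. Lindeberg then reduces (via a Dvoretzky-type inequality) to
\[
\frac{1}{N_0}\sum_{i=1}^{N_t}\E\Bigl[\bigl|w_t^{(i)}h_t(\xi_t^{(i)})\bigr|^2\mathds{1}\Bigl\{\tfrac{1}{\sqrt{N_0}}\bigl|w_t^{(i)}h_t(\xi_t^{(i)})\bigr|\geq C\Bigr\}\Bigm|\mathcal{F}_{t-1}\Bigr]\stackrel{\Prob}{\rightarrow}0,
\]
which is exactly where your second-moment hypotheses and the truncation idea enter. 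Once you pass to this finer array your argument goes through; without it the CLT step does not.
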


%In the formulas of Theorem \ref{multvar},
%three sources contribute to the asymptotic error of an SMC estimate.
%The initialization step provides the first source of error,
%while resampling and mutation steps
%contribute error at subsequent stages of an SMC simulation:
%\begin{equation*}
%\underbrace{\Var\left[G_0 h_0\right]}_{\text{initialization}} 
%+ \sum_{t=0}^{T-1} \underbrace{\hat{\eta}_t^2 \left[h_t\right]}_{\text{resampling}}
%+ \sum_{t=0}^{T-1} \underbrace{\E\left[\prod_{s=0}^t G_s\right] 
%\E\left[\prod_{s=0}^t G_s \Var\left[\left.G_{t+1} h_{t+1} \right| X_t\right]\right]}_{\text{mutation}}
%\end{equation*}
%
%In the formulas of Theorem \ref{multvar},
%a key strategy for reducing the error of SMC estimates is selecting an appropriate resampling scheme.
There are two main conclusions that can be drawn from Theorem \ref{multvar}
about how best to choose a resampling scheme.
The first conclusion
is that residual versions of a resampling scheme should be used whenever possible.
Error formulas for multinomial and multinomial residual
resampling are differentiated by a factor of $\tilde{G}_t$ for multinomial 
and a factor of 
$\left\{\tilde{G}_t\right\}$ for multinomial residual resampling.
Since $\left\{\tilde{G}_t\right\}$ is always as low or lower than $\tilde{G}_t$,
the multinomial residual resampling scheme can lead to reduced SMC error.
Similarly, stratified residual resampling has an improved asymptotic error upper bound
compared to stratified resampling.

The second conclusion that follows from Theorem \ref{multvar}
is that sorting schemes can substantially reduce error,
depending on the coordinate $\theta_t$ used for sorting.
Error formulas for multinomial and stratified resampling are distinguished by 
a factor of $\min_{c \in \mathbb{R}}
\E\left[\prod_{s=0}^t \tilde{G_s} 
\left|h_t - c \right|^2\right]$
for multinomial and a factor of
$\min_{p \colon \mathbb{R} \rightarrow \mathbb{R}}
\E \left[\prod_{s=0}^t \tilde{G}_s \left|h_t - p \left(\theta_t\right)\right|^2\right]$
for stratified resampling.
Since
$\min_{p \colon \mathbb{R} \rightarrow \mathbb{R}}
\E \left[\prod_{s=0}^t \tilde{G}_s \left|h_t - p \left(\theta_t\right)\right|^2\right]
\leq \min_{c \in \mathbb{R}}
\E\left[\prod_{s=0}^t \tilde{G_s} 
\left|h_t - c \right|^2\right]$,
asymptotic error for stratified resampling is as low or lower
than asymptotic error for multinomial resampling.
In the simplest case where $\theta_t \equiv 0$,
particles are not sorted in any particular order and error reduction may be very mild;
on the other hand,
as the stratification effect due to sorting by $\theta_t$ increases,
the error contributed at each resampling step
approaches zero.
Similarly, asymptotic error for stratified residual resampling
is as low or lower than asymptotic error for multinomial residual resampling,
with a major reduction possible
depending on the coordinate $\theta_t$.

Below, two examples of resampling schemes that use sorting to achieve error reduction are described:

\begin{example}[Sorting in $\mathbb{R}^d$]
When applying SMC to a one-dimensional system,
\citet{kitagawa1996monte} sorted particles $\xi_t^{\left(i\right)}$ by their values in $\mathbb{R}$ and 
then applied stratified resampling, 
leading to a dramatic reduction in resampling variance.
Later, \citet{gerber2017negative} suggested a more general strategy
of sorting particles in $\mathbb{R}^d$
according to a Hilbert curve, a measurable one-to-one mapping
from $\mathbb{R}^d$ into $\mathbb{R}$.
In both cases, Theorem \ref{multvar} gives an upper bound on asymptotic error 
with $\hat{\eta}_t^2\left[h_t\right] = 0$.
%Moreover, it then follows from the proof of Theorem \ref{multvar} that
%$\frac{1}{\sqrt{N_0}}\left(\hat{f} - \E\left[\prod_{t=0}^{T-1} G_t f\right]\right)
%\stackrel{\mathcal{D}}{\rightarrow} N\left(0, \eta^2\right)$ where
%\begin{equation}
%\eta^2 = \Var\left[G_0 h_0\right] + \sum_{t=0}^{T-1} \E\left[\prod_{s=0}^t G_s\right]
%\E\left[\prod_{s=0}^t G_s \Var\left[\left.G_{t+1} h_{t+1}\right| X_t\right]\right]
%\end{equation}
This is the lowest possible asymptotic error for any SMC scheme.
It should be noted however that 
pre-asymptotic resampling variance for this sorting strategy
is difficult to estimate; further research may help
elucidate the practical efficiency of Hilbert curve sorting.
\end{example}
\begin{example}[Binning]
In binned resampling \cite{huber1996weighted},
the state space is sorted into bins $B_1, B_2, \ldots, B_K$,
and particles $\xi_t^{\left(i\right)}$ are arranged 
by bin number, from highest to lowest.
When stratified resampling is applied,
Theorem \ref{multvar} gives an upper bound on asymptotic error with
\begin{equation*}
\label{binformula}
\hat{\eta}_t^2\left[h_t\right] = 
\left(\E \left[\prod_{s=0}^t G_s\right]\right)^2
\E \left[\prod_{s=0}^t \tilde{G}_s \sum_{k=1}^K \mathds{1}_{B_k}
\left|h_t - \frac{\E \left[\prod_{s=0}^t \tilde{G}_s \mathds{1}_{B_k} h_t\right]}
{\E \left[\prod_{s=0}^t \tilde{G}_s\right]}\right|^2\right]
\end{equation*}
As values of $h_t$ become increasingly similar in each bin $B_k$,
equation \eqref{binformula} guarantees that asymptotic error must decrease.
In particular, as the diameter of the bins approaches zero in a region that grows to fill the state space $E_t$,
$\hat{\eta}_t^2\left[h_t\right]$ approaches the lowest possible level: $\hat{\eta}_t^2\left[h_t\right] = 0$.
%However, in practical applications
%the bins cannot be made too small,
%or else there will be a single particle per bin and the scheme
%cannot benefit from any homogeneity of particles within bins.
\end{example}
 
%\begin{remark}
%Early analyses \cite{del2004feynman, chopin2004central, douc2008limit}
%sometimes used more restrictive assumptions on $\left(G_t\right)_{t \geq 0}$ and $f$
%or dealt only with SMC stimates of ratios 
%$\E\left[\prod_{t=0}^{T-1} G_t f\right] \slash \E\left[\prod_{t=0}^{T-1} G_t\right]$.
%\end{remark}
 
% Section 7 -----------------------------------------------------------------------------------------------------------------------------

\section{Conclusion} \label{sec:Conclusion}

The present work derives a theoretical framework that unifies past SMC scholarship
and establishes significant new results.
The framework uses a simple parametrization to describe a great variety of resampling schemes.
The theoretical framework includes a unified error analysis and asymptotic error formulas with a unified structure
that can be used to compare resampling schemes.

The resampling matrix framework combines a fresh look at common resampling schemes
with new technical tools to analyze SMC error.
Aymptotic error is defined in a new way,
as mean squared error outside a set of vanishing probability.
This notion of error leads to simple proofs and rigorous comparisons between resampling schemes.
Due to this innovation, asymptotic error formulas are now available
for stratified resampling and stratified residual resampling,
including the full range of unbounded functions $\left(G_t\right)_{t \geq 0}$ and $f$
used in practical implementations of SMC.

%Another technical innovation is to view
%fixed population and random population schemes
%as part of a unified framework.
%This underscores the connections between fixed and random population resampling
%and leads to a convincing demonstration that fixed population schemes
%can perform as well or better than random population schemes.

The framework leads to two concrete recommendations for how best to resample:
\begin{enumerate}
\item
Firstly, practitioners are encouraged to use stratified residual resampling instead of
multinomial residual resampling and stratified resampling instead of multinomial resampling
in order to reduce resampling variance.
Similar recommendations were given in \citet{douc2005comparison},
but resampling matrices provide a more intuitive and general explanation 
for reductions in resampling variance.
\item
Secondly, sorting schemes can lead to extremely low asymptotic error rates.
These schemes are recommended when there is a coordinate $\theta_t$
that can be used to sort particles $\left(\xi_t^{\left(i\right)}\right)_{1 \leq i \leq N_t}$
to achieve a beneficial stratification effect in the resampling step.
%More research is necessary to determine the impact of sorting, however,
%when a natural coordinate is not available.
%\item
%Lastly, when no effective sorting scheme can be found,
%it is best to use a fixed population scheme
%that satisfies the minimal variance condition as nearly as possible.
\end{enumerate}

In summary, the unifying analysis in the current paper
shines light on the best ways to resample,
providing practical guidance to help SMC users make the most of
the powerful and versatile SMC algorithm.

%Moving forward, a range of future improvements are possible.
%Following the lead of \citet{gerber2017negative},
%who established convergence results for 
%negatively associated resampling schemes,
%efforts are underway to establish asymptotic error formulas for 
%SMC with negatively associated resampling.
%Another area of current research is establish an almost sure weak convergence result
%for the empirical measures $\frac{1}{N_0} \sum_{j=1}^{N_t} \delta\left(\hat{\xi}_{t-1}^{\left(j\right)}, \xi_t^{\left(j\right)}\right)$ in the case of unbounded $\left(G_t\right)_{t \geq 0}$.
%Lastly, while the current study identified stratified resampling with the coordinate $h_t = \theta_t$
%as the optimal fixed population scheme,
%it remains an open question what is the optimal fixed population scheme
%that does not depend on particle values.

% --------------------------------------------------------------------------------------------------------------

\section{Appendix}

\subsection{Estimates of ratios}

SMC is often used to approximate ratios
\begin{equation*}
\frac{\sum_{j=1}^{N_{T}}\hat{w}_{T-1}^{\left(j\right)}f\left(\hat{\xi}_{T-1}^{\left(j\right)},\xi_{T}^{\left(j\right)}\right)}
{\sum_{j=1}^{N_T}\hat{w}_{T-1}^{\left(j\right)}
\mathds{1}\left\{\hat{\xi}_{T-1}^{\left(j\right)} \neq c\right\}}
\approx\frac{\E\left[\prod_{t=0}^{T-1} G_t f \right]}{\E\left[\prod_{t=0}^{T-1} G_t \right]}
\end{equation*}
In some cases, the denominator in the SMC estimate may equal zero,
%but this is not likely when $N_0$ is large 
and the estimate can be assigned 
an arbitrary value when this occurs.
If $\E\left[\prod_{s=0}^{t} G_s \right]<\infty$ for $0 \leq t \leq T-1$
and $\E\left|\prod_{t=0}^{T-1} G_t f \right|<\infty$, then
Theorem \ref{weak} guarantees
\begin{equation*}
\begin{cases}
\sum_{j=1}^{N_{T}}\hat{w}_{T-1}^{\left(j\right)}f\left(\hat{\xi}_{T-1}^{\left(j\right)},\xi_{T}^{\left(j\right)}\right) 
\stackrel{\Prob}{\rightarrow}
\E\left[\prod_{t=0}^{T-1} G_t f \right]
\\
\sum_{j=1}^{N_T}\hat{w}_{T-1}^{\left(j\right)}
\mathds{1}\left\{\hat{\xi}_{T-1}^{\left(j\right)} \neq c\right\}
\stackrel{\Prob}{\rightarrow}
\E\left[\prod_{t=0}^{T-1} G_t \right]
\end{cases}
\end{equation*}
Therefore, SMC estimates of ratios are convergent.
While expressions for the bias and variance of these estimates are challenging to derive, 
asymptotic error for these estimates can be studied with the aid of the following lemma:

\begin{lem}{\label{slutskylemma}}
\begin{enumerate}
\item{\label{slutsky}}
If $Y_n \stackrel{\mathcal{D}}{\rightarrow} N\left(0, 1\right)$ 
and $W_n \stackrel{\Prob}{\rightarrow} c \in \mathbb{R}$, 
then $Y_n W_n \stackrel{\mathcal{D}}{\rightarrow} N\left(0, c^2\right)$.
\item{\label{notslutsky}}
If $\left|Y_n\right| \lesssim 1$ and $W_n \stackrel{\Prob}{\rightarrow} c > 0$,
then $\left|Y_n W_n\right| \lesssim c$.
\end{enumerate}
\end{lem}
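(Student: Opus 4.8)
The plan is to prove the two parts separately, relying on standard tools: part \ref{slutsky} is essentially Slutsky's theorem, while part \ref{notslutsky} requires unpacking Definition \ref{def:asymerror}. For part \ref{slutsky}, I would write $Y_n W_n = Y_n c + Y_n (W_n - c)$. The first term converges in distribution to $N(0, c^2)$ by the continuous mapping theorem applied to $x \mapsto cx$. For the second term, since $Y_n$ converges in distribution it is tight, and $W_n - c \stackrel{\Prob}{\rightarrow} 0$, so $Y_n(W_n - c) \stackrel{\Prob}{\rightarrow} 0$; then Slutsky's lemma (sum of a convergent-in-distribution sequence and a vanishing-in-probability sequence) gives the claim. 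If $c = 0$ the statement reads $Y_n W_n \stackrel{\mathcal{D}}{\rightarrow} 0$, which also follows since $Y_n W_n \stackrel{\Prob}{\rightarrow} 0$ by tightness of $Y_n$.

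For part \ref{notslutsky}, I need to produce, for the sequence $(Y_n W_n)$ converging to $0$, a sequence of sets $(B_n)$ with $\Prob(B_n) \rightarrow 1$ and $\limsup_n \E[\mathds{1}_{B_n} |Y_n W_n / c|^2] \leq 1$. By hypothesis $|Y_n| \lesssim 1$, so there is a sequence $(B_n')$ with $\Prob(B_n') \rightarrow 1$ and $\limsup_n \E[\mathds{1}_{B_n'} |Y_n|^2] \leq 1$. The idea is to intersect $B_n'$ with an event on which $W_n$ is close to $c$: fix $\epsilon > 0$ and set $D_n^\epsilon = \{|W_n - c| < \epsilon\}$, so $\Prob(D_n^\epsilon) \rightarrow 1$ since $W_n \stackrel{\Prob}{\rightarrow} c$. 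On $B_n' \cap D_n^\epsilon$ we have $|Y_n W_n / c|^2 \leq \left(\frac{c + \epsilon}{c}\right)^2 |Y_n|^2$, hence
\begin{equation*}
\limsup_{n \rightarrow \infty} \E\left[\mathds{1}_{B_n' \cap D_n^\epsilon} \left|\frac{Y_n W_n}{c}\right|^2\right]
\leq \left(\frac{c + \epsilon}{c}\right)^2 \limsup_{n \rightarrow \infty} \E\left[\mathds{1}_{B_n'} \left|Y_n\right|^2\right]
\leq \left(\frac{c + \epsilon}{c}\right)^2.
\end{equation*}
This is not quite $\leq 1$, so the final step is a diagonal argument: choose a sequence $\epsilon_k \downarrow 0$, and for each $k$ pick $n_k$ large enough that $\Prob(B_n' \cap D_n^{\epsilon_k}) > 1 - 1/k$ for all $n \geq n_k$ and that the expectation above is within $1/k$ of its limsup; then define $B_n = B_n' \cap D_n^{\epsilon_k}$ for $n_k \leq n < n_{k+1}$. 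With this choice $\Prob(B_n) \rightarrow 1$ and $\limsup_n \E[\mathds{1}_{B_n} |Y_n W_n/c|^2] \leq 1$, which is exactly $|Y_n W_n| \lesssim c$.

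The main obstacle is the diagonalization in part \ref{notslutsky}: the naive choice $B_n = B_n' \cap \{|W_n - c| < \epsilon\}$ for a fixed $\epsilon$ only yields the bound $(1 + \epsilon/c)^2$, which exceeds $1$, so one genuinely has to let the tolerance shrink along the sequence while still keeping the probabilities tending to $1$. This is a routine but slightly delicate interleaving; the positivity hypothesis $c > 0$ is essential here so that division by $c$ and the ratio $(c+\epsilon)/c$ are well-behaved. Everything else is bookkeeping with Definition \ref{def:asymerror}.
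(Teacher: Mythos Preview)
Your proposal is correct and follows essentially the same approach as the paper. For part \ref{slutsky} you spell out Slutsky's theorem while the paper simply cites it; for part \ref{notslutsky} both you and the paper run a diagonal argument over a shrinking tolerance on $W_n$ and intersect with the sets $B_n'$ coming from $|Y_n|\lesssim 1$, differing only in the order of ``intersect'' versus ``diagonalize'' and in minor phrasing (your ``within $1/k$ of its limsup'' would be cleaner as ``at most $((c+\epsilon_k)/c)^2 + 1/k$'', but the intent and validity are clear).
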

\begin{proof}
Part \ref{slutsky} follows from Slutsky's Theorem.
To prove part \ref{notslutsky}, first construct a sequence of sets $\left(C_n\right)_{n \geq 1}$ with the properties
$\Prob\left(C_n\right) \rightarrow 1$ and 
$\limsup_{n \rightarrow \infty} \left\lVert \mathds{1}_{C_n} \frac{W_n}{c} \right\rVert_{\infty} \leq 1$.
Set $E_{m,n} = \left\{\frac{W_n}{c} \leq 1 + \frac{1}{m}\right\}$ for $m,n \geq 1$
and $D_{1,n} = E_{1,n}$ for $n \geq 1$.
By the hypothesis in part \ref{notslutsky},
there exists a number $N\left(m\right) \geq 1$ such that $\Prob\left(E_{m,n}\right) \geq 1 - \frac{1}{m}$
for $n \geq N\left(m\right)$.
Accordingly, for $m > 1$ define
\begin{equation*}
\begin{cases}
D_{m,n} = D_{m-1,n}, & n < N\left(m\right) \\
D_{m,n} = E_{m,n}, & n \geq N\left(m\right)
\end{cases}
\end{equation*}
By this construction,
for $m \geq M$ and $n \geq N\left(M\right)$,
$\left\lVert \mathds{1}_{D_{m,n}} \frac{W_n}{c}\right\rVert_{\infty} \leq 1 + \frac{1}{M}$
and $\Prob\left(D_{m,n}\right) \geq 1 - \frac{1}{M}$.
Setting $C_n = D_{n,n}$ gives the required sequence.
Lastly, select $\left(B_n\right)_{n \geq 1}$ so that $\Prob\left(B_n\right) \rightarrow 1$ and
$\limsup_{n \rightarrow \infty} \E\left[\mathds{1}_{B_n} Y_n^2\right] \leq 1$.
Then $D_n = B_n \cap C_n$ satisfies $\Prob\left(D_n\right) \rightarrow 1$ and
$\limsup_{n \rightarrow \infty} \E\left[\mathds{1}_{D_n} \left|\frac{Y_n W_n}{c}\right|^2\right]
\leq 1$.
\end{proof}

To apply Lemma \ref{slutskylemma}, set $\tilde{f} = f - \frac{\E\left[\prod_{t=0}^{T-1} G_t f\right]}{\E\left[\prod_{t=0}^{T-1} G_t \right]}$ and observe
\begin{equation*}
\frac{\sum_{j=1}^{N_T} \hat{w}_{T-1}^{\left(j\right)} \tilde{f}\left(\hat{\xi}_{T-1}^{\left(j\right)},\xi_{T}^{\left(j\right)}\right)}
{ \sum_{j=1}^{N_T} \hat{w}_{T-1}^{\left(j\right)} \mathds{1}\left\{\hat{\xi}_{T-1} \neq c\right\}}
= \frac{\sum_{j=1}^{N_{T}}\hat{w}_{T-1}^{\left(j\right)}f\left(\hat{\xi}_{T-1}^{\left(j\right)},\xi_{T}^{\left(j\right)}\right)}
{\sum_{j=1}^{N_T}\hat{w}_{T-1}^{\left(j\right)}
\mathds{1}\left\{\hat{\xi}_{T-1}^{\left(j\right)} \neq c\right\}} 
- \frac{\E\left[\prod_{t=0}^{T-1} G_t f \right]}{\E\left[\prod_{t=0}^{T-1} G_t \right]}
%\\
%\label{asymexpansionend}
%& = \frac{1}{N_0} \sum_{j=1}^{N_T} \hat{w}_{T-1}^{\left(j\right)} \tilde{f}\left(\hat{\xi}_{T-1}^{\left(j\right)},\xi_{T}^{\left(j\right)}\right)
%\left(\frac{1}{\left[\prod_{t=0}^{T-1} G_t\right]} + o_{\Prob}\left(1\right)\right)
\end{equation*}
Since $\frac{1}{N_0}  \sum_{j=1}^{N_T} \hat{w}_{T-1}^{\left(j\right)} \mathds{1}\left\{\hat{\xi}_{T-1} \neq c\right\}
\stackrel{\Prob}{\rightarrow} \E\left[\prod_{t=0}^{T-1} G_t\right]$,
the asymptotic error of an SMC estimate is the asymptotic error of 
$ \frac{1}{N_0} \sum_{j=1}^{N_T} \hat{w}_{T-1}^{\left(j\right)} \tilde{f}\left(\hat{\xi}_{T-1}^{\left(j\right)},\xi_{T}^{\left(j\right)}\right)$
scaled by a factor of $\left(\E\left[\prod_{t=0}^{T-1} G_t\right]\right)^{-1}$.
A corollary of Theorem \ref{multvar} gives precise expressions for asymptotic error:

\begin{cor}
Set
$\tilde{G}_t = \E\left[\prod_{s=0}^{t-1} G_s \right] G_t \slash \E\left[\prod_{s=0}^t G_s\right]$
and set
\begin{equation*}
\tilde{h}_t \left(x_t\right) = \E\left[\left.\prod_{s=t+1}^{T-1} \tilde{G_s} 
\left(f - \frac{\E\left[\prod_{r=0}^{T-1} G_r f\right]}{\E\left[\prod_{r=0}^{T-1} G_r\right]}\right)\right| X_t = x_t\right]
\end{equation*}
Assume that $\E\left[\prod_{s=0}^t G_s\right] < \infty$ for $0 \leq t \leq T-1$, $\E\left|G_0 \tilde{h}_0\right|^2 < \infty$,
and $\E\left[\prod_{s=0}^t G_s \left|G_{t+1} \tilde{h}_{t+1}\right|^2\right]$ for $0 \leq t \leq T-1$.
If multinomial residual or stratified residual resampling is used, also assume $\E\left[\prod_{s=0}^{t-1} G_s \mathds{1}\left\{\tilde{G}_t \in \left\{1,2,\ldots\right\} \right\}\right] = 0$ for $0 \leq t \leq T-1$.
Define
\begin{equation*}
\eta^2 = \Var\left[\tilde{G}_0 \tilde{h}_0\right] + \sum_{t=0}^{T-1} \hat{\eta}_t^2 \left[\tilde{h}_t\right]
+ \sum_{t=0}^{T-1} 
\E\left[\prod_{s=0}^t \tilde{G}_s \Var\left[\left.\tilde{G}_{t+1} \tilde{h}_{t+1} \right| X_t\right]\right]
\end{equation*}
where $\eta^2$ depends on a sequence of numbers $\left(\hat{\eta}_t\left[\tilde{h}_t\right]^2\right)_{0 \leq 1 \leq T-1}$.

First assume multinomial resampling, Bernoulli resampling, or multinomial residual resampling is used.
Then SMC estimates satisfy the CLT
\begin{equation*}
\sqrt{N_0}\left(\frac{\sum_{j=1}^{N_{T}}\hat{w}_{T-1}^{\left(j\right)}f\left(\hat{\xi}_{T-1}^{\left(j\right)},\xi_{T}^{\left(j\right)}\right)}
{\sum_{j=1}^{N_T}\hat{w}_{T-1}^{\left(j\right)}
\mathds{1}\left\{\hat{\xi}_{T-1}^{\left(j\right)} \neq c\right\}} 
- \frac{\E\left[\prod_{t=0}^{T-1} G_t f \right]}{\E\left[\prod_{t=0}^{T-1} G_t \right]}\right)
\stackrel{\mathcal{D}}{\rightarrow} N\left(0, \eta^2\right)
\end{equation*}
where $\hat{\eta}_t^2\left[\tilde{h}_t\right]$ is determined by the resampling scheme:

\begin{align*}
\begin{array}{l l}
\text{multinomial} & 
\E\left[\prod_{s=0}^t \tilde{G_s} 
\tilde{h}_t^2\right]
\\[.3cm]
\text{multinomial residual} &
\min_{c \in \mathbb{R}}
\E\left[\prod_{s=0}^{t-1} \tilde{G}_s \left\{\tilde{G}_t\right\} \left|\tilde{h}_t - c\right|^2 \right]
\\[.3cm]
\text{Bernoulli} &
\E\left[\prod_{s=0}^{t-1} \tilde{G}_s \left\{\tilde{G}_t\right\}\left(1 - \left\{\tilde{G}_t\right\}\right) 
\tilde{h}_t^2\right]
\end{array}
\end{align*}

Next assume at each resampling step particles are sorted by a coordinate $\theta_t$
and then stratified or stratified residual resampling is used.
Then, 
\begin{equation*}
\left|\frac{\sum_{j=1}^{N_{T}}\hat{w}_{T-1}^{\left(j\right)}f\left(\hat{\xi}_{T-1}^{\left(j\right)},\xi_{T}^{\left(j\right)}\right)}
{\sum_{j=1}^{N_T}\hat{w}_{T-1}^{\left(j\right)}
\mathds{1}\left\{\hat{\xi}_{T-1}^{\left(j\right)} \neq c\right\}} 
- \frac{\E\left[\prod_{t=0}^{T-1} G_t f \right]}{\E\left[\prod_{t=0}^{T-1} G_t \right]}\right|
\lesssim \frac{\eta}{\sqrt{N_0}}
\end{equation*}
where $\hat{\eta}_t^2\left[\tilde{h}_t\right]$ is determined by the resampling scheme:
\begin{align*}
\begin{array}{l l}
\text{stratified} &
\min_{p\colon \mathbb{R} \rightarrow \mathbb{R}}
\E \left[\prod_{s=0}^t \tilde{G}_s \left|\tilde{h}_t - p \left(\theta_t\right)\right|^2\right]
\\[.3cm]
\text{stratified residual} &
\min_{p\colon \mathbb{R} \rightarrow \mathbb{R}}
\E \left[\prod_{s=0}^{t - 1} \tilde{G}_s \left\{\tilde{G}_t\right\} \left|\tilde{h}_t - p \left(\theta_t\right)\right|^2\right]
\end{array}
\end{align*}
\end{cor}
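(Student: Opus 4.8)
The plan is to reduce the ratio estimate to the unbiased estimate handled by Theorem~\ref{multvar} and then transfer the conclusions with the Slutsky-type Lemma~\ref{slutskylemma}. Write $b = \E\bigl[\prod_{t=0}^{T-1} G_t\bigr]$ and set $\tilde f = f - b^{-1}\E\bigl[\prod_{t=0}^{T-1}G_t f\bigr]$ on $E_{T-1}\times E_T$, extended by the value $0$ on the coffin state, so that $\tilde f$ is a legitimate observable in the framework and $\E\bigl[\prod_{t=0}^{T-1}G_t\tilde f\bigr] = 0$. With
\begin{equation*}
A_n = \frac{1}{N_0}\sum_{j=1}^{N_T}\hat{w}_{T-1}^{(j)}\tilde f\bigl(\hat{\xi}_{T-1}^{(j)},\xi_T^{(j)}\bigr),
\qquad
B_n = \frac{1}{N_0}\sum_{j=1}^{N_T}\hat{w}_{T-1}^{(j)}\mathds{1}\bigl\{\hat{\xi}_{T-1}^{(j)}\neq c\bigr\},
\end{equation*}
the ratio error equals $A_n/B_n$, exactly as recorded in the discussion preceding the corollary, and $B_n \stackrel{\Prob}{\rightarrow} b > 0$ by Theorem~\ref{weak} applied with $f$ replaced by the indicator of the non-coffin states.

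First I would apply Theorem~\ref{multvar} to the observable $\tilde f$. Since its estimand vanishes, the theorem gives $\sqrt{N_0}\,A_n/\eta_{\tilde f}\stackrel{\mathcal{D}}{\rightarrow} N(0,1)$ for multinomial, Bernoulli, or multinomial residual resampling, and $|A_n|\lesssim \eta_{\tilde f}/\sqrt{N_0}$ for sorted stratified or stratified residual resampling, where $\eta_{\tilde f}^2$ is the asymptotic variance of Theorem~\ref{multvar} evaluated at $h_t^{\tilde f}(x_t) = \E\bigl[\prod_{s=t+1}^{T-1}G_s\tilde f \,|\, X_t=x_t\bigr]$. Feeding $Y_n = \sqrt{N_0}\,A_n/\eta_{\tilde f}$ and $W_n = b/B_n \stackrel{\Prob}{\rightarrow} 1$ into part~\ref{slutsky} of Lemma~\ref{slutskylemma} in the first case, and part~\ref{notslutsky} in the second, then gives $\sqrt{N_0}(A_n/B_n)\stackrel{\mathcal{D}}{\rightarrow} N(0,\eta_{\tilde f}^2/b^2)$ and $|A_n/B_n| \lesssim \eta_{\tilde f}/(b\sqrt{N_0})$, respectively. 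The moment hypotheses required by Theorem~\ref{multvar} for $\tilde f$ are precisely those assumed in the corollary, because $G_0 h_0^{\tilde f}$, $G_{t+1}h_{t+1}^{\tilde f}$ and $\E[\prod_{s=0}^t G_s]$ differ from $\tilde G_0\tilde h_0$, $G_{t+1}\tilde h_{t+1}$ and the corresponding quantities only by fixed positive constants, and the residual nondegeneracy condition $\E[\prod_{s=0}^{t-1}G_s\mathds{1}\{\tilde G_t\in\{1,2,\ldots\}\}] = 0$ is common to both statements.

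It then remains to identify $\eta_{\tilde f}^2/b^2$ with the quantity $\eta^2$ in the statement. I would do this with the telescoping identities $\prod_{s=0}^t\tilde G_s = \E[\prod_{r=0}^t G_r]^{-1}\prod_{s=0}^t G_s$ and $\prod_{s=t+1}^{T-1}\tilde G_s = \E[\prod_{r=0}^t G_r]\,b^{-1}\prod_{s=t+1}^{T-1}G_s$, which imply $h_t^{\tilde f} = b\,\E[\prod_{r=0}^t G_r]^{-1}\tilde h_t$, $G_t = \E[\prod_{r=0}^t G_r]\,\E[\prod_{r=0}^{t-1}G_r]^{-1}\tilde G_t$, and hence $G_0 h_0^{\tilde f} = b\,\tilde G_0\tilde h_0$ and $G_{t+1}h_{t+1}^{\tilde f} = b\,\E[\prod_{r=0}^t G_r]^{-1}\tilde G_{t+1}\tilde h_{t+1}$. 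Substituting these into the three groups of terms making up $\eta_{\tilde f}^2$ --- the initialization term $\Var[G_0 h_0^{\tilde f}]$, the resampling terms $\hat\eta_t^2[h_t^{\tilde f}]$, and the mutation terms $\E[\prod_{s=0}^t G_s]\,\E[\prod_{s=0}^t G_s\Var[G_{t+1}h_{t+1}^{\tilde f}\mid X_t]]$ --- every power of $b$ and of $\E[\prod_{r=0}^t G_r]$ cancels after division by $b^2$, reproducing respectively $\Var[\tilde G_0\tilde h_0]$, the stated $\hat\eta_t^2[\tilde h_t]$, and $\E[\prod_{s=0}^t\tilde G_s\Var[\tilde G_{t+1}\tilde h_{t+1}\mid X_t]]$. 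The one substantive new observation is that the minimum over $c\in\mathbb{R}$ in the multinomial line disappears: the minimiser equals $\E[\prod_{s=0}^t\tilde G_s\tilde h_t]/\E[\prod_{s=0}^t\tilde G_s] = \E[\prod_{s=0}^{T-1}\tilde G_s\tilde f] = 0$, using $\E[\prod_{s=0}^t\tilde G_s] = 1$ and the centering of $\tilde f$, whereas in the residual lines $\tilde h_t$ need not be centered against the measure reweighted by $\{\tilde G_t\}$, so those minima are retained.

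The main obstacle is not any single step but keeping the normalization bookkeeping of the preceding paragraph straight; once the telescoping identities are written down the cancellations are routine. A secondary point needing care is verifying that $\tilde f$, extended by $0$ on the coffin state, really is the observable for which $A_n$ is the Theorem~\ref{multvar} estimate, so that the identity $A_n/B_n = (\text{ratio error})$ holds, and that the denominator limit is the $f\equiv\mathds{1}$ instance of Theorem~\ref{weak}.
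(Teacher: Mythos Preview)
Your proposal is correct and follows essentially the same approach as the paper: center $f$ to $\tilde f$, express the ratio error as $A_n/B_n$, apply Theorem~\ref{multvar} to the numerator, and transfer via Lemma~\ref{slutskylemma} using $B_n\stackrel{\Prob}{\rightarrow}\E\bigl[\prod_{t=0}^{T-1}G_t\bigr]$. You are in fact more explicit than the paper, which states the reduction and the Slutsky step but leaves the normalization bookkeeping (your telescoping identities and the observation that the multinomial minimiser $c$ vanishes by centering) to the reader.
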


\subsection{Proofs for Theorems \ref{weak}, \ref{bounded}, and \ref{multvar}}

To prove Theorem \ref{weak}, first introduce intermediate $\sigma$-algebras between $\mathcal{F}_{t-1}$
and $\mathcal{F}_t$:
\begin{equation*}
\begin{cases}
\mathcal{F}_t^{\left(0\right)} = \mathcal{F}_{t-1} \\
\mathcal{F}_t^{\left(i\right)} = \mathcal{F}_t^{\left(i-1\right)} \vee \sigma\left(\xi_t^{\left(i\right)}\right),
& t = 0, 1 \leq i \leq N_t - 1 \\
\mathcal{F}_t^{\left(i\right)} = \mathcal{F}_t^{\left(i-1\right)} \vee \sigma\left(\hat{\xi}_{t-1}^{\left(i\right)}, \xi_t^{\left(i\right)}\right),
& t > 0, 1 \leq i \leq N_t - 1 \\
\mathcal{F}_t^{\left(N_t\right)} = \mathcal{F}_t
\end{cases}
\end{equation*}
Next, define a martingale $M_t^{\left(i\right)} = \E\left[\left.\frac{1}{N_0}\sum_{k=1}^{N_T} \hat{w}_{T-1}^{\left(k\right)}
f\left(\hat{\xi}_{T-1}^{\left(k\right)}, \xi_T^{\left(k\right)}\right)\right| \mathcal{F}_t^{\left(i\right)}\right]$.
Since pairs
$\left(w_t^{\left(k\right)}, \xi_t^{\left(k\right)}\right)$ are conditionally independent
given $\mathcal{F}_{t-1}$,
it follows
\begin{equation}{\label{martrep2}}
M_t^{\left(i\right)} = \frac{1}{N_0} \sum_{k=1}^i w_t^{\left(k\right)} h_t \left(\xi_t^{\left(k\right)}\right)
 + \frac{1}{N_0} \sum_{k=i+1}^{N_t} \E\left[\left. w_t^{\left(k\right)} h_t \left(\xi_t^{\left(k\right)}\right)
 \right| \mathcal{F}_{t-1}\right]
\end{equation}
%Use Proposition \ref{unbiased} and the fact that random pairs
%$\left(w_t^{\left(k\right)}, \xi_t^{\left(k\right)}\right)$ are conditionally independent
%given the $\sigma$-algebra $\mathcal{F}_{t-1}$.
%\begin{align}
%& \quad \E\left[\left.\frac{1}{N_0}\sum_{k=1}^{N_T} \hat{w}_{T-1}^{\left(k\right)}
%f\left(\hat{\xi}_{T-1}^{\left(k\right)}, \xi_T^{\left(k\right)}\right)\right| \mathcal{F}_t^{\left(i\right)}\right]
%= \E\left[\left.\frac{1}{N_0}\sum_{k=1}^{N_t} w_t^{\left(k\right)}
%h_t\left(\xi_t^{\left(k\right)}\right)\right| \mathcal{F}_t^{\left(i\right)}\right] \\
%\label{describe1}
%& = \frac{1}{N_0} \sum_{k=1}^i w_t^{\left(k\right)} h_t \left(\xi_t^{\left(k\right)}\right)
% + \frac{1}{N_0} \sum_{k=i+1}^{N_t} \E\left[\left. w_t^{\left(k\right)} h_t \left(\xi_t^{\left(k\right)}\right)
% \right| \mathcal{F}_t^{\left(i\right)}\right] \\
% \label{describe2}
%& = \frac{1}{N_0} \sum_{k=1}^i w_t^{\left(k\right)} h_t \left(\xi_t^{\left(k\right)}\right)
% + \frac{1}{N_0} \sum_{k=i+1}^{N_t} \E\left[\left. w_t^{\left(k\right)} h_t \left(\xi_t^{\left(k\right)}\right)
% \right| \mathcal{F}_{t-1}\right]
%\end{align}
%\end{proof}
The proof of Theorem \ref{weak} also requires two technical lemmas.
\begin{lem} \label{condconv}
For each $n\geq1$, suppose $\mathcal{G}_{n0}\subseteq\mathcal{G}_{n1}\subseteq\mathcal{G}_{n2}\subseteq\cdots\subseteq\mathcal{G}_{n,k_n}$
is a filtration and $\left(Y_{nj}\right)_{1\leq j\leq k_n}$
is a sequence of random variables with $Y_{nj}$ measurable in $\mathcal{G}_{nj}$.
Suppose
\begin{equation*}
\label{twoconditions}
\begin{cases}
\sum_{j=1}^{k_n} \E\left[\left|Y_{nj}\right|\mathds{1}\left\{ \left|Y_{nj}\right|>C\right\} \rvert \mathcal{G}_{n,j-1}\right]\stackrel{\Prob}{\rightarrow} 0, & C > 0 \\
\lim_{\lambda \rightarrow \infty} \sup_{n\geq1} \Prob \left\{ \sum_{j=1}^{k_n} \E\left[\left|Y_{nj}\right|\rvert\mathcal{G}_{n,j-1}\right]>\lambda\right\} = 0
\end{cases}
\end{equation*}
Then, $\sum_{j=1}^{k_n}\left\{ Y_{nj}-\E\left[Y_{nj}\rvert\mathcal{G}_{n,j-1}\right]\right\} \stackrel{P}{\rightarrow}0$.
\end{lem}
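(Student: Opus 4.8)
The plan is to prove the lemma by a two-stage truncation argument: first truncate in the ``time'' index via a stopping time, using the tightness hypothesis (the second condition), to reduce to arrays whose sums of conditional absolute means are bounded by a fixed constant; then truncate in amplitude at a small level $C$, using the first condition, and split the centered sum into a small-variance martingale part and an $L^{1}$-negligible large part.

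First I would fix $\epsilon,\gamma>0$ and use the second condition to choose $\lambda$ with $\sup_{n}\Prob\{\sum_{j=1}^{k_{n}}\E[|Y_{nj}|\mid\mathcal{G}_{n,j-1}]>\lambda\}<\gamma/4$. Writing $R_{nm}=\sum_{j=1}^{m}\E[|Y_{nj}|\mid\mathcal{G}_{n,j-1}]$, which is nondecreasing in $m$ and $\mathcal{G}_{n,m-1}$-measurable, I define $\tau_{n}=\max\{m\le k_{n}:R_{nm}\le\lambda\}$ (with $R_{n0}=0$, so $\tau_{n}$ is well defined). Then $\{\tau_{n}\ge m\}=\{R_{nm}\le\lambda\}\in\mathcal{G}_{n,m-1}$, so $\tau_{n}$ is a stopping time, $R_{n,\tau_{n}}\le\lambda$ almost surely, and $\{\tau_{n}=k_{n}\}=\{R_{n,k_{n}}\le\lambda\}$ has probability $>1-\gamma/4$ uniformly in $n$. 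Replacing $Y_{nj}$ by $\tilde{Y}_{nj}=Y_{nj}\mathds{1}\{j\le\tau_{n}\}$ and using $\{j\le\tau_{n}\}\in\mathcal{G}_{n,j-1}$, the stopped centered sum $\tilde{S}_{n}=\sum_{j}(\tilde{Y}_{nj}-\E[\tilde{Y}_{nj}\mid\mathcal{G}_{n,j-1}])$ is again a sum of martingale differences, it agrees with $S_{n}$ on $\{\tau_{n}=k_{n}\}$, it still satisfies the first condition with a quantity no larger than the original, and it satisfies $\sum_{j}\E[|\tilde{Y}_{nj}|\mid\mathcal{G}_{n,j-1}]=R_{n,\tau_{n}}\le\lambda$ almost surely. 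So it suffices to show $\tilde{S}_{n}\stackrel{\Prob}{\to}0$.

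Next I would truncate the stopped variables at a level $C$ (to be chosen small, depending on $\epsilon,\gamma,\lambda$): set $\tilde{Y}'_{nj}=\tilde{Y}_{nj}\mathds{1}\{|\tilde{Y}_{nj}|\le C\}$ and $\tilde{Y}''_{nj}=\tilde{Y}_{nj}-\tilde{Y}'_{nj}$, and split $\tilde{S}_{n}=S'_{n}+L_{n}$ accordingly. For the small part, martingale-difference orthogonality gives $\E[(S'_{n})^{2}]=\sum_{j}\E[\Var(\tilde{Y}'_{nj}\mid\mathcal{G}_{n,j-1})]\le\sum_{j}\E[\E[(\tilde{Y}'_{nj})^{2}\mid\mathcal{G}_{n,j-1}]]\le C\,\E[\sum_{j}\E[|\tilde{Y}_{nj}|\mid\mathcal{G}_{n,j-1}]]\le C\lambda$, using $(\tilde{Y}'_{nj})^{2}\le C|\tilde{Y}'_{nj}|$ and conditional Jensen; choosing $C$ small enough and applying Chebyshev makes $\Prob\{|S'_{n}|>\epsilon/3\}<\gamma/4$ for all $n$. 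For the large part, $|L_{n}|\le\sum_{j}|\tilde{Y}''_{nj}|+\sum_{j}\E[|\tilde{Y}''_{nj}|\mid\mathcal{G}_{n,j-1}]$; the second term is exactly the first-condition quantity for the stopped array, hence tends to $0$ in probability, and since it is also bounded by $\lambda$, bounded convergence gives $\E[\sum_{j}|\tilde{Y}''_{nj}|]=\E[\sum_{j}\E[|\tilde{Y}''_{nj}|\mid\mathcal{G}_{n,j-1}]]\to0$; by Markov's inequality both halves are below $\epsilon/3$ outside a set of probability $<\gamma/4$ each, for $n$ large. Combining with the $<\gamma/4$ probability that $\tau_{n}<k_{n}$ yields $\Prob\{|S_{n}|>\epsilon\}<\gamma$ for all large $n$, which is the claim.

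The main obstacle, and the part requiring genuine care, is the nesting of quantifiers together with the validity of the stopping-time reduction: one must commit to $\lambda$ from the tightness condition, then to $\tau_{n}$, then to a small $C$, and only then let $n\to\infty$, while checking that both hypotheses survive the passage to the stopped array $\tilde{Y}_{nj}$ (they do, since stopping only deletes terms and $\{j\le\tau_{n}\}$ is $\mathcal{G}_{n,j-1}$-measurable, so conditional expectations factor as $\E[\tilde{Y}_{nj}\mid\mathcal{G}_{n,j-1}]=\mathds{1}\{j\le\tau_{n}\}\E[Y_{nj}\mid\mathcal{G}_{n,j-1}]$). Everything else reduces to martingale orthogonality, conditional Jensen, Markov's inequality, and bounded convergence.
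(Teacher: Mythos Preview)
Your proof is correct. The paper itself does not prove this lemma; it simply cites Hall and Heyde (1980, pp.~45--47), McLeish (1974), and Douc and Moulines (2008), noting that the result is classical in the martingale literature. Your two-stage truncation argument---first stopping the array via the predictable time $\tau_n$ to force $\sum_j \E[|\tilde{Y}_{nj}|\mid\mathcal{G}_{n,j-1}]\le\lambda$ almost surely, then truncating in amplitude and splitting into an $L^2$-small martingale piece and an $L^1$-negligible tail piece---is essentially the standard proof one finds in those references, so in spirit you have reproduced what the paper defers to. One cosmetic remark: the step you label ``conditional Jensen'' is really just the pointwise bound $(\tilde{Y}'_{nj})^2 \le C|\tilde{Y}'_{nj}|$ (from $|\tilde{Y}'_{nj}|\le C$) together with monotonicity of conditional expectation; no convexity is invoked, but the inequality and hence the argument are unaffected.
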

\begin{proof}
The lemma can be traced back to the early martingale literature,
particularly \citet[pg. 45-47]{hall1980martingale} and \citet[pg. 626]{mcleish1974dependent}.
The lemma appears in \citet{douc2008limit},
who also use the lemma to prove convergence of SMC schemes.
%In this martingale convergence theorem, 
%the first requirement is an asymptotic negligibility condition that guarantees smallness of martingale differences.
%The second requirement is a tightness condition,
%guaranteeing that the sum $\sum_{j=1}^{k_n} Y_{nj}$ does not wander off to infinity.
\end{proof}

\begin{lem}{\label{weaklemma}}
If $\E\left[\prod_{s=0}^{t} G_s \right]<\infty$ for $0 \leq t \leq T-1$
and $\E\left|\prod_{t=0}^{T-1} G_t f \right|<\infty$, then for each $0 \leq t \leq T$
and $C > 0$
\begin{align}
\label{weaklindeberg}
& 
\E\left[\left. \frac{1}{N_0} \sum_{i=1}^{N_t} \hat{w}_{t-1}^{\left(i\right)} 
\left| f\left(\xi_t^{\left(i\right)}\right) \right|
\mathds{1}\left\{\frac{1}{N_0} \hat{w}_{t-1}^{\left(i\right)} \left|f\left(\xi_t^{\left(i\right)}\right)\right|\geq C\right\} 
\right\rvert\mathcal{F}_{t-1} \right]
\stackrel{\Prob}{\rightarrow} 0
\end{align}
\end{lem}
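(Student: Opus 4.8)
\emph{Proof idea.} Read $f\bigl(\xi_t^{\left(i\right)}\bigr)$ as the value at the step-$t$ configuration of the function driving the relevant martingale increment --- it is $G_th_t$, a function of $(X_{t-1},X_t)$, at an interior step and the test function $f$ at the final step --- so that $\hat w_{t-1}^{\left(i\right)}f\bigl(\xi_t^{\left(i\right)}\bigr)$ is exactly the quantity $w_t^{\left(i\right)}h_t\bigl(\xi_t^{\left(i\right)}\bigr)$ appearing in \eqref{martrep2}; set $\hat w_{-1}^{\left(i\right)}=1$. The plan is to prove \eqref{weaklindeberg} by induction on $t$, run in tandem with Theorem~\ref{weak}, so that when treating time $t$ one may already invoke Theorem~\ref{weak} with horizon $t-1$. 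The base case $t=0$ is immediate: $\mathcal{F}_{-1}$ is trivial and the $\xi_0^{\left(i\right)}$ are i.i.d.\ from $\Law(X_0)$, so the left side of \eqref{weaklindeberg} reduces to $\E\bigl[\,|f(X_0)|\,\mathds{1}\{|f(X_0)|\ge CN_0\}\,\bigr]$, which tends to $0$ by dominated convergence once one notes that the integrability hypotheses together with $G_s\ge0$ make this first moment finite.

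For the inductive step, evaluate the conditional expectation in \eqref{weaklindeberg} particle by particle. Since $\hat w_{t-1}^{\left(i\right)}$ and the $i$th column of $W_{t-1}$ are $\mathcal{F}_{t-1}$-measurable while the pairs $\bigl(\hat\xi_{t-1}^{\left(i\right)},\xi_t^{\left(i\right)}\bigr)$ are conditionally independent given $\mathcal{F}_{t-1}$, the left side of \eqref{weaklindeberg} equals
\begin{equation*}
\frac{1}{N_0}\sum_{i=1}^{N_t}\ \sum_{k} w_{t-1}^{\left(k,i\right)}\,
\phi\!\left(\xi_{t-1}^{\left(k\right)},\ \frac{CN_0}{\hat w_{t-1}^{\left(i\right)}}\right),
\qquad
\phi(x,a):=\E\bigl[\,|f|\,\mathds{1}\{|f|\ge a\}\ \big|\ X_{t-1}=x\bigr],
\end{equation*}
the inner sum running over the rows of $W_{t-1}$, with the coffin row contributing $0$. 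Here $\phi(x,\cdot)$ is nonincreasing, bounded by $\bar f(x):=\E[\,|f|\mid X_{t-1}=x\,]$, and satisfies $\phi(x,a)\downarrow0$ as $a\to\infty$. Fix $\delta>0$ and split the columns into \emph{light} ones ($\hat w_{t-1}^{\left(i\right)}\le\delta N_0$) and \emph{heavy} ones ($\hat w_{t-1}^{\left(i\right)}>\delta N_0$); bounding $\phi$ by $\phi(\cdot,C/\delta)$ on light columns and by $\bar f$ on heavy ones, and using the row-sum property $\sum_i w_{t-1}^{\left(k,i\right)}=w_{t-1}^{\left(k\right)}$, the displayed quantity is at most
\begin{equation*}
\frac{1}{N_0}\sum_{k} w_{t-1}^{\left(k\right)}\,\phi\!\left(\xi_{t-1}^{\left(k\right)},C/\delta\right)
\;+\;
\frac{1}{N_0}\sum_{k}\bar f\!\left(\xi_{t-1}^{\left(k\right)}\right)
\sum_{i\,:\,\hat w_{t-1}^{\left(i\right)}>\delta N_0} w_{t-1}^{\left(k,i\right)}.
\end{equation*}

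The first term has the form $\frac{1}{N_0}\sum_k w_{t-1}^{\left(k\right)}g\bigl(\xi_{t-1}^{\left(k\right)}\bigr)$, so by Theorem~\ref{weak} with horizon $t-1$ it converges in probability to $\E\bigl[\prod_{s=0}^{t-1}G_s\,|f|\,\mathds{1}\{|f|\ge C/\delta\}\bigr]$, which itself tends to $0$ as $\delta\to0$ by dominated convergence; given $\eta>0$ one fixes $\delta$ making this limit below $\eta$. For the second term, Assumption~\ref{assumption1} gives $\hat w_{t-1}^{\left(i\right)}\le C_{t-1}\max_j w_{t-1}^{\left(j\right)}$, so it suffices to know that $\tfrac{1}{N_0}\max_j w_{t-1}^{\left(j\right)}\stackrel{\Prob}{\rightarrow}0$: then for large $N_0$ there are no heavy columns and the second term vanishes with probability tending to $1$. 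A union bound over the events $\{\text{first term}>\eta\}$ and $\{\text{a heavy column exists}\}$ then closes the induction --- provided this ``no particle dominates'' statement is available, which I would carry along the same induction, passing it from time $t-1$ to time $t$ via $w_{t-1}^{\left(j\right)}=\hat w_{t-2}^{\left(j\right)}G_{t-1}\bigl(\hat\xi_{t-2}^{\left(j\right)},\xi_{t-1}^{\left(j\right)}\bigr)$, Assumption~\ref{assumption1}, and the finiteness of $\E[\prod_{s=0}^{t-1}G_s]$.

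The step I expect to be the main obstacle is exactly this last one. Because the weights $\hat w_{t-1}^{\left(i\right)}$ are genuinely unbounded --- unlike in the setting of Theorem~\ref{bounded} --- the truncation on the left of \eqref{weaklindeberg} can only be tamed once one controls quantitatively both the total weight $\sum_i\hat w_{t-1}^{\left(i\right)}=N_0\overline{w}_{t-1}$ (bounded in probability) and the largest single weight. Establishing $\tfrac{1}{N_0}\max_j w_{t-1}^{\left(j\right)}\stackrel{\Prob}{\rightarrow}0$ is the delicate point: it is where Assumption~\ref{assumption1} and the hypotheses $\E[\prod_{s=0}^{t}G_s]<\infty$, $\E|\prod_{t=0}^{T-1}G_tf|<\infty$ are really used, and it is the reason \eqref{weaklindeberg} is best proved by an induction interlocked with Theorem~\ref{weak} rather than in isolation. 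Once it is in hand, Lemma~\ref{weaklemma} supplies exactly the first hypothesis of Lemma~\ref{condconv} that is needed in the proof of Theorem~\ref{weak}.
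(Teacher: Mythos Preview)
Your proposal is correct and follows the same overall architecture as the paper: induction on $t$, the $t=0$ case by dominated convergence, and for the inductive step a light/heavy split on the column weights $\hat w_{t-1}^{(i)}$ at threshold $\delta N_0$, with the heavy part eliminated once $\tfrac{1}{N_0}\max_j w_{t-1}^{(j)}\stackrel{\Prob}{\rightarrow}0$. Your explicit computation of the conditional expectation via $\phi(x,a)$ and the row-sum identity is exactly what underlies the paper's bound.

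Two simplifications in the paper's version are worth noting. First, the control on the maximum weight is not a separate statement to be carried along: it falls out of the \emph{same} induction hypothesis applied at time $t-1$ with the test function taken to be $G_{t-1}$ (so that $\hat w_{t-2}^{(j)}|G_{t-1}|=w_{t-1}^{(j)}$, and the integrability needed is just $\E[\prod_{s=0}^{t-1}G_s]<\infty$). From that one gets $\tfrac{1}{N_0}\max_j w_{t-1}^{(j)}\stackrel{\Prob}{\rightarrow}0$ by a Markov-type bound, and then Assumption~\ref{assumption1} transfers this to the $\hat w_{t-1}^{(i)}$. Second, for the light term the paper does not invoke Theorem~\ref{weak} at horizon $t-1$; instead it uses Markov's inequality together with the unbiasedness of Theorem~\ref{earlyunbiased}, which gives directly
\[
\frac{1}{\epsilon}\,\E\!\left[\frac{1}{N_0}\sum_i \hat w_{t-1}^{(i)}\,|f|\,\mathds{1}\{|f|\ge C/\delta\}\right]
=\frac{1}{\epsilon}\,\E\!\left[\prod_{s=0}^{t-1}G_s\,|f|\,\mathds{1}\{|f|\ge C/\delta\}\right].
\]
This makes the proof of Lemma~\ref{weaklemma} entirely self-contained, so that Theorem~\ref{weak} can then be proved \emph{from} the lemma via Lemma~\ref{condconv}, rather than requiring the interlocked induction you describe. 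Your interlocked version would also go through, but it is a detour you do not need.
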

\begin{proof}[Proof of Lemma \ref{weaklemma}]
Use induction on the time index $0 \leq t \leq T$.
For the $t = 0$ case, the Dominated Convergence Theorem shows
\begin{align*}
& \quad \E\left[\frac{1}{N_0}\sum_{i=1}^{N_0} 
G_0\left(\xi_0^{\left(i\right)}\right)
\left|f\left(\xi_0^{\left(i\right)}\right)\right|
\mathds{1}\left\{\frac{1}{N_0}G_0\left(\xi_0^{\left(i\right)}\right)
\left|f\left(\xi_0^{\left(i\right)}\right)\right|\geq C\right\}\right] \\
&= \E\left[G_0 \left|f\right| \mathds{1}\left\{ \frac{1}{N_0} G_0 \left|f\right| \geq C\right\}\right]
\rightarrow 0
\end{align*}

Next, assume \eqref{weaklindeberg} holds for all times $0 \leq s \leq t-1$ and consider a time $t \geq 1$.
By the induction assumption,
\begin{align*}
& \E\left[\left.\frac{1}{N_0} \sum_{i=1}^{N_{t-1}}
w_{t-1}^{\left(i\right)}
\mathds{1}\left\{\frac{1}{N_0} w_{t-1}^{\left(i\right)} \geq C\right\} 
\right\rvert\mathcal{F}_{t-1}\right]
\stackrel{\Prob}{\rightarrow} 0, 
& C>0
\end{align*}
For $\delta > 0$, calculate
\begin{align*}
& \quad \Prob\left\{\max_{1 \leq i \leq N_{t-1}} \frac{w_{t-1}^{\left(i\right)}}{N_0} \geq \delta\right\}
= \E\left[\Prob\left\{\left.\max_{1 \leq i \leq N_{t-1}} \frac{w_{t-1}^{\left(i\right)}}{N_0} \geq \delta 
\right|\mathcal{F}_{t-1}\right\}\right] \\
& \leq \E\left[\min\left\{1, 
\frac{1}{\delta} \E\left[\left.\frac{1}{N_0} \sum_{i=1}^{N_{t-1}}
w_{t-1}^{\left(i\right)}
\mathds{1}\left\{\frac{1}{N_0} w_{t-1}^{\left(i\right)} \geq \delta\right\} 
\right\rvert\mathcal{F}_{t-1}\right]\right\}\right]
\end{align*}
Sending $N_0$ to infinity, it follows that
$\max_{1 \leq i \leq N_{t-1}} \frac{w_{t-1}^{\left(i\right)}}{N_0} 
\stackrel{\Prob}{\rightarrow} 0$, and by Assumption \ref{assumption1} 
$\max_{1\leq j \leq N_t}\frac{\hat{w}_{t-1}^{\left(j\right)}}{N_0}\stackrel{P}{\rightarrow}0$.

Now for $C>0$ and $\delta>0$, define
$\nu = \left|f\right| \mathds{1}\left\{\left|f\right| \geq \frac{C}{\delta}\right\}$.
Calculate
\begin{align*}
& \quad \Prob \left\{
\E\left[\left.\frac{1}{N_{0}} \sum_{i=1}^{N_t}
\hat{w}_{t-1}^{\left(i\right)} \left| f\left(\xi_t^{\left(i\right)}\right) \right|
\mathds{1}\left\{\frac{1}{N_0} \hat{w}_{t-1}^{\left(i\right)}\left| f\left(\xi_t^{\left(i\right)}\right) \right|\geq C\right\} 
\right\rvert\mathcal{F}_t\right]
> \epsilon\right\} \\
& \leq 
\Prob \left\{\max_{1\leq j\leq N_t}\frac{\hat{w}_{t-1}^{\left(j\right)}}{N_0} > \delta\right\}
+ \frac{1}{\epsilon}
\E\left[\frac{1}{N_{0}} \sum_{i=1}^{N_t} \hat{w}_{t-1}^{\left(i\right)}
\nu \left(\hat{\xi}_{t-1}^{\left(i\right)}, \xi_t^{\left(i\right)}\right)\right] \\
& = \Prob \left\{\max_{1\leq j\leq N_t}\frac{\hat{w}_{t-1}^{\left(j\right)}}{N_0} > \delta\right\}
+ \frac{1}{\epsilon}
\E\left[\prod_{s=0}^{t-1} G_s \left| f\right| \mathds{1}\left\{\left|f\right| \geq \frac{C}{\delta}\right\}\right]
\end{align*}
Sending $N_0$ to infinity and $\delta$ to zero verifies \eqref{weaklindeberg} at time $t$.
\end{proof}

\begin{proof}[Proof of Theorem \ref{weak}]
Lemma \ref{weaklemma} verifies the first condition of Lemma \ref{condconv}, namely,
\begin{equation*}
\sum_{t=0}^T \sum_{i=1}^{N_t} 
E\left[\left.
\frac{1}{N_0} w_t^{\left(i\right)} \left| h_t \left(\xi_t^{\left(i\right)}\right) \right|
\mathds{1}\left\{\frac{1}{N_0} w_t^{\left(i\right)} \left|h_t \left(\xi_t^{\left(i\right)}\right)\right| > C\right\}
\right|\mathcal{F}_t^{\left(i-1\right)}\right] 
\stackrel{P}{\rightarrow} 0
\end{equation*}
To verify the second condition, observe 
\begin{align*}
\label{tightness}
& \quad \Prob \left\{\sum_{t=0}^T \sum_{i=1}^{N_t} \E \left[\left.\frac{1}{N_0} w_t^{\left(j\right)} \left|h_t\left(\xi_t^{\left(j\right)}\right)\right|
\right\rvert \mathcal{F}_t^{\left(i - 1\right)}\right] > \lambda
\right\} \\
& \leq \frac{1}{\lambda} \sum_{t=0}^T \E \left[\frac{1}{N_0} \sum_{j=1}^{N_t} w_t^{\left(j\right)} 
\left|h_t\left(\xi_t^{\left(j\right)}\right)\right|\right]
\leq \frac{T+1}{\lambda} \E\left[\prod_{t=0}^{T-1} G_t \left|f\right|\right]
\end{align*}
The last quantity tends to zero as $\lambda \rightarrow \infty$.
Apply Lemma \ref{condconv} to conclude.
%\begin{equation}
%\sum_{t=0}^T \sum_{i=1}^{N_t} 
%\left\{\frac{1}{N_0} w_t^{\left(i\right)} h_t \left(\xi_t^{\left(i\right)}\right) - 
%E\left[\left.\frac{1}{N_0} w_t^{\left(i\right)} h_t \left(\xi_t^{\left(i\right)}\right)
%\right|\mathcal{F}_t^{\left(i-1\right)}\right]\right\} \stackrel{\Prob}{\rightarrow} 0
%\end{equation}
\end{proof}

%requires a standard variance decomposition for martingales:
%\begin{lem}{\label{basicvar}}
%Suppose 
%$\mathcal{G}_{0}\subseteq\mathcal{G}_{1}\subseteq\mathcal{G}_{2}\subseteq\cdots\subseteq\mathcal{G}_{n}$
%is a filtration and $S_{n} = \sum_{j=1}^n Y_j$ is a sum of martingale differences,
%with $\E\left[\left.Y_j\right| \mathcal{G}_{j-1}\right] = 0$. Then,
%\begin{equation}
%\Var\left[S_n\right]
%= \E\left[\sum_{i = 1}^n \Var\left[\left.Y_j\right|\mathcal{G}_{j-1}\right]\right]
%\end{equation}
%\end{lem}
%\begin{proof}
%\begin{align}
%& \quad \Var\left[S_n\right] = \E\left|\sum_{j=1}^n Y_j \right|^2
%= \sum_{i = 1}^n \E\left|Y_j\right|^2 
%+ 2 \sum_{1 \leq i < j \leq n} \E\left[Y_i Y_j\right] \\
%& = \sum_{i = 1}^n \E\left[\Var\left[\left.Y_j\right|\mathcal{G}_{j-1}\right]\right] 
%+ 2 \sum_{1 \leq i < j \leq n} \E\left[Y_i \E\left[\left.Y_j\right|\mathcal{G}_{j-1}\right]\right] \\
%& = \E\left[\sum_{i = 1}^n \Var\left[\left.Y_j\right|\mathcal{G}_{j-1}\right]\right] 
%\end{align}
%\end{proof}
\begin{proof}[Proof of Theorem \ref{bounded}]
The proof
uses a standard variance decomposition for martingales:
\begin{equation*}
\Var\left[M_T\right]
= \sum_{t=0}^T \sum_{i=1}^{N_t} 
\E\left[\Var\left[\left.M_t^{\left(i\right)} \right| \mathcal{F}_t^{\left(i-1\right)}\right]\right] \\
\leq \E\left[\sum_{t=0}^T \sum_{i=1}^{N_t} 
\left|\frac{1}{N_0} w_t^{\left(i\right)} h_t\left(\xi_t^{\left(i\right)}\right) \right|^2 \right]
\end{equation*}
Since functions $G_t$ are bounded, weights
$\hat{w}_t^{\left(i\right)}$ are also bounded, with $\hat{w}_t^{\left(i\right)} \leq \prod_{s=0}^t C_s \sup G_s$.
Thus, conclude
\begin{align*}
\Var\left[M_T\right] & \leq \sum_{t=0}^T \left(\prod_{s=0}^{t-1} C_s \sup G_s\right)
\E\left[ \frac{1}{N_0^2} \sum_{i=1}^{N_t} 
\hat{w}_{t-1}^{\left(i\right)}
\left|G_t \left(\hat{\xi}_{t-1}^{\left(i\right)}, \xi_t^{\left(i\right)}\right) h_t\left(\xi_t^{\left(i\right)}\right) \right|^2 \right] \\
& = \frac{1}{N_0} \sum_{t=0}^T \left(\prod_{s=0}^{t-1} C_s \sup G_s\right)
\E\left[ \prod_{s=0}^{t-1} G_s \left|G_t h_t\right|^2\right] \\
& \leq \frac{1}{N_0} \E\left[\prod_{t=0}^{T-1} G_t f^2\right] \prod_{t=0}^{T-1} \sup G_t 
\sum_{t=0}^T \prod_{s=0}^{t-1} C_s
\end{align*}
\end{proof}

The proof of Theorem \ref{multvar} requires a series of lemmas.

%There are two methods to compute the asymptotic error
%of a martingale sequence,
%and both methods will be applied to the study of SMC schemes.
%The first method is to apply the powerful martingale Central Limit Theorem,
%which gives a precise asymptotic error rate
%governed by the sum of conditional variances.
%The second method is to compute an upper bound on the sum of conditional variances 
%that holds with high probability,
%thus guaranteeing an upper bound on asymptotic error.
%Both methods for computing asymptotic error are summarized in Theorem \ref{tools}:

\begin{lem}{\label{tools}}
For each $n\geq1$, suppose $\mathcal{G}_{n0}\subseteq\mathcal{G}_{n1}\subseteq\mathcal{G}_{n2}\subseteq\cdots\subseteq\mathcal{G}_{n,k_n}$
is a filtration and $S_{n,k_n} = \sum_{j=1}^{k_n} Y_{nj}$ is the sum of martingale differences
with $\E\left[\left.Y_{nj}\right| \mathcal{G}_{n,j-1}\right] = 0$.
Define $V_{n, k_n}^2 = \sum_{j=1}^{k_n} \Var\left[\left.Y_{nj}\right| \mathcal{G}_{n,j-1}\right]$.
\begin{enumerate}[label = (\alph*)]
\item{\label{parta}}
If $V_{n, k_n}^2 \stackrel{\Prob}{\rightarrow} 1$ and if
$\sum_{j=1}^{k_n} \E\left[\left.Y_{nj}^2 \mathds{1}\left\{\left|Y_{nj}\right| > C\right\}\right| \mathcal{G}_{n,j-1}\right] \stackrel{\Prob}{\rightarrow} 0$ for each $C > 0$,
then $S_{n, k_n} \stackrel{\mathcal{D}}{\rightarrow} N\left(0, 1\right)$.
%If additionally the sequence $\left|f\left(n\right) V_{n,k_n}\right|^2$ is uniformly integrable,
%then $\lim_{n \rightarrow \infty} \E\left|f\left(n\right) S_{n,k_n}\right|^2 = 1$.
\item{\label{partb}}
If $\Prob\left\{V_{n, k_n}^2 > 1 + \epsilon\right\} \rightarrow 0$ for all $\epsilon > 0$,
then $\left|S_{n,k_n}\right| \lesssim 1$.
%If additionally the sequence $\left|f\left(n\right) V_{n,k_n}\right|^2$ is uniformly integrable, 
%then $\limsup_{n \rightarrow \infty} \E\left|f\left(n\right) S_{n,k_n}\right|^2 \leq \eta^2$.
\end{enumerate}
\end{lem}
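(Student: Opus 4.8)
For part~\ref{parta}, the plan is to invoke the classical martingale central limit theorem: the two stated hypotheses --- the conditional Lindeberg condition and the convergence of the predictable quadratic variations $V_{n,k_n}^2$ to the \emph{deterministic} limit $1$ --- are precisely its hypotheses, so $S_{n,k_n}\stackrel{\mathcal{D}}{\rightarrow}N(0,1)$ follows directly from the standard statement (e.g.\ \citet{hall1980martingale} or \citet{mcleish1974dependent}); since the limiting variance is a constant, no nesting condition on the $\sigma$-algebras across $n$ is required. There is essentially nothing to do beyond citing this result. (Combined with Lemma~\ref{guarantee}, part~\ref{parta} in fact yields $|S_{n,k_n}|\sim 1$.)

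For part~\ref{partb}, the plan is a stopping-time truncation that turns the statement into an elementary second-moment estimate. Fix $\epsilon>0$ and define
\begin{equation*}
\sigma_n = \min\left\{1\le j\le k_n : V_{n,j}^2 > 1+\epsilon\right\},
\end{equation*}
with $\sigma_n = k_n+1$ when the set is empty. Because each conditional variance $\Var[Y_{nj}\,|\,\mathcal{G}_{n,j-1}] = \E[Y_{nj}^2\,|\,\mathcal{G}_{n,j-1}]$ is $\mathcal{G}_{n,j-1}$-measurable, the partial sum $V_{n,j}^2$ is $\mathcal{G}_{n,j-1}$-measurable, so $\{\sigma_n>j\}\in\mathcal{G}_{n,j-1}$ and the truncated array $\mathds{1}\{\sigma_n>j\}\,Y_{nj}$ is again a martingale-difference sequence. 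Setting $\hat{S}_n = \sum_{j=1}^{k_n}\mathds{1}\{\sigma_n>j\}\,Y_{nj}$, orthogonality of martingale differences gives
\begin{equation*}
\E\bigl[\hat{S}_n^2\bigr] = \sum_{j=1}^{k_n}\E\bigl[\mathds{1}\{\sigma_n>j\}\,\Var[Y_{nj}\,|\,\mathcal{G}_{n,j-1}]\bigr] = \E\bigl[V_{n,(\sigma_n-1)\wedge k_n}^2\bigr] \le 1+\epsilon,
\end{equation*}
the final inequality holding because, by the definition of $\sigma_n$, the accumulated conditional variance stays at or below $1+\epsilon$ up to index $\sigma_n-1$ --- stopping \emph{one step before} the offending increment is exactly what prevents an overshoot term.

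To conclude part~\ref{partb}, note that on the event $B_n^{\epsilon} := \{V_{n,k_n}^2\le 1+\epsilon\} = \{\sigma_n>k_n\}$ one has $\mathds{1}\{\sigma_n>j\}=1$ for all $j\le k_n$, so $\hat{S}_n = S_{n,k_n}$ there, while $\Prob(B_n^{\epsilon})\to 1$ by hypothesis; hence $\limsup_n\E[\mathds{1}_{B_n^{\epsilon}}S_{n,k_n}^2]\le\limsup_n\E[\hat{S}_n^2]\le 1+\epsilon$. A routine diagonal extraction then produces a single sequence: choose $\epsilon_n\downarrow 0$ slowly enough that $\Prob\{V_{n,k_n}^2\le 1+\epsilon_n\}\to 1$ (possible since $\Prob\{V_{n,k_n}^2>1+\epsilon\}\to 0$ for every fixed $\epsilon>0$) and set $B_n := \{V_{n,k_n}^2\le 1+\epsilon_n\}$; then $\Prob(B_n)\to 1$ and $\limsup_n\E[\mathds{1}_{B_n}S_{n,k_n}^2]\le\lim_n(1+\epsilon_n)=1$, which by Definition~\ref{def:asymerror} is exactly $|S_{n,k_n}|\lesssim 1$.

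The main obstacle is isolating the correct truncation in part~\ref{partb}: attempting to bound $\E[\mathds{1}_{B_n}S_{n,k_n}^2]$ directly fails because $\mathds{1}_{B_n}$ depends on the entire trajectory and is not predictable, so the martingale orthogonality that makes the second-moment bound trivial is unavailable. Passing to the stopped martingale restores predictability, and --- crucially --- stopping just \emph{before} the conditional variance would exceed $1+\epsilon$ keeps the truncated predictable quadratic variation bounded by $1+\epsilon$ with no overshoot; this is what lets part~\ref{partb} dispense with any Lindeberg-type or uniform-integrability hypothesis on the increments $Y_{nj}$. The remaining steps, including the diagonal extraction, are routine.
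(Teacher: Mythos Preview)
Your proof is correct and follows essentially the same strategy as the paper: for part~\ref{parta} both simply cite the martingale CLT, and for part~\ref{partb} both take $B_n = \{V_{n,k_n}^2 \le 1+\epsilon_n\}$ with $\epsilon_n\downarrow 0$ via a diagonal extraction. Your stopping-time argument in fact supplies the justification for the key inequality $\E[\mathds{1}_{B_n^\epsilon}S_{n,k_n}^2]\le 1+\epsilon$, which the paper's proof asserts without explanation; since $\mathds{1}_{B_n^\epsilon}$ is not $\mathcal{G}_{n,j-1}$-predictable, some such device (stopping just before the predictable quadratic variation exceeds $1+\epsilon$) is indeed required, and your treatment is the cleaner of the two.
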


\begin{proof}
%[Proof of Theorem \ref{tools}]
Part \ref{parta} is the martingale
CLT \cite[pg.58-59]{hall1980martingale}.
%If $\left|f\left(n\right) V_{n, k_n}\right|^2 \stackrel{\Prob}{\rightarrow} \eta^2$ and 
%the sequence $\left|f\left(n\right) V_{n, k_n}\right|^2$ is uniformly integrable,
%then $\left|f\left(n\right) V_{n, k_n}\right|^2 \stackrel{L^1}{\rightarrow} \eta^2$ and therefore
%\begin{equation}
%\lim_{n \rightarrow \infty} \E\left|f\left(n\right) S_{n,k_n}\right|^2
%= \lim_{n \rightarrow \infty} \E\left|f\left(n\right) V_{n,k_n}\right|^2 = 1
%\end{equation}
To prove part \ref{partb}, first set $E_{m,n} = \left\{V_{n,k_n}^2 \leq 1 + \frac{1}{m}\right\}$
for $m,n \geq 1$ and $D_{1,n} = E_{1,n}$ for $n \geq 1$.
By the assumption in part \ref{partb}, there exists a number $N\left(m\right) \geq 1$ such that $\Prob\left(E_{m,n}\right) \geq 1 - \frac{1}{m}$
for $n \geq N\left(m\right)$.
Accordingly, for $m \geq 1$ define 
\begin{equation*}
\begin{cases}
D_{m,n} = D_{m-1,n}, & n < N\left(m\right) \\
D_{m,n} = E_{m,n}, & n \geq N\left(m\right)
\end{cases}
\end{equation*}
By this construction,
for any $m \geq M$ and $n \geq N\left(M\right)$,
$\E\left[\mathds{1}_{D_{m,n}} S_{n,k_n}^2 \right] \leq 1 + \frac{1}{M}$
and $\Prob\left(D_{m,n}\right) \geq 1 - \frac{1}{M}$.
Setting $B_n = D_{n,n}$ gives $\limsup_{n \rightarrow \infty} \E\left[\mathds{1}_{B_n} S_{n,k_n}^2\right] \leq 1$
and $\Prob\left(B_n\right) \rightarrow 1$.
%
%For the uniformly integrable case, write
%\begin{align}
%\E\left|f\left(n\right) S_{n,k_n}\right|^2
%& = \E\left|f\left(n\right) V_{n,k_n}\right|^2 \\
%& \leq 1 + \epsilon
%+ \E\left|\mathds{1}\left\{\left|f\left(n\right) V_{n,k_n}\right|^2 > 1 + \epsilon\right\} f\left(n\right) V_{n,k_n}\right|^2
%\end{align}
%Sending $n$ to infinity, the second term vanishes by uniform integrability.
%Sending $\epsilon$ to zero gives
%$\limsup_{n \rightarrow \infty} \E\left|f\left(n\right) S_{n,k_n}\right|^2 \leq 1$.
\end{proof}

\begin{lem}{\label{lemma2}}
Assume
$\E\left[\prod_{s=0}^{t} G_s \right]<\infty$ for $0 \leq t \leq T-1$
and assume $\E\left[\prod_{s=0}^t G_s \left|G_t h_t\right|^2 \right]<\infty$. 
Then for each $0 \leq t \leq T$ and $C > 0$,
\begin{equation*}
\E\left[\left. \frac{1}{N_0} \sum_{i=1}^{N_t}
\left| w_t^{\left(i\right)} h_t\left(\xi_t^{\left(i\right)}\right) \right|^2
\mathds{1}\left\{\frac{1}{\sqrt{N_0}} w_t^{\left(i\right)} 
\left| h_t\left(\xi_t^{\left(i\right)}\right) \right|\geq C\right\} 
\right\rvert\mathcal{F}_{t-1} \right]
\stackrel{\Prob}{\rightarrow} 0
\end{equation*}
\end{lem}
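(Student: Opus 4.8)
The plan is to prove the bound by induction on $t$, following the scheme of the proof of Lemma~\ref{weaklemma} but now propagating \emph{squared} quantities truncated at level $1/\sqrt{N_0}$ rather than $1/N_0$. For the base case $t=0$ the $\sigma$-algebra $\mathcal{F}_{-1}$ is trivial and $w_0^{(i)}=G_0(\xi_0^{(i)})$ with $\xi_0^{(i)}$ i.i.d.\ copies of $X_0$, so the quantity in question is the deterministic number $\E\bigl[|G_0h_0|^2\,\mathds{1}\{G_0|h_0|\ge C\sqrt{N_0}\}\bigr]$, which tends to $0$ by dominated convergence since $\E|G_0h_0|^2<\infty$.

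For the inductive step, fix $t\ge 1$, put $\phi(x,y)=|G_t(x,y)h_t(y)|^2$ with the convention $\phi(c,\cdot)=0$, and note $w_t^{(i)}|h_t(\xi_t^{(i)})|=\hat w_{t-1}^{(i)}\sqrt{\phi(\hat\xi_{t-1}^{(i)},\xi_t^{(i)})}$. For a parameter $\delta>0$, the elementary inequality $\mathds{1}\{\tfrac{1}{\sqrt{N_0}}\hat w_{t-1}^{(i)}\sqrt{\phi}\ge C\}\le\mathds{1}\{\phi\ge C^2/\delta^2\}+\mathds{1}\{\hat w_{t-1}^{(i)}>\delta\sqrt{N_0}\}$ splits the conditional expectation into a ``small weight'' part $B_1$ and a ``large weight'' part $B_2$, the second indicator being $\mathcal{F}_{t-1}$-measurable. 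I would then evaluate $\E[\,\cdot\mid\mathcal{F}_{t-1}]$ in the two stages used throughout the paper: conditioning on $\mathcal{F}_{t-\frac{1}{2}}$ integrates out the mutation $\xi_t^{(i)}\sim\Law(X_t\mid X_{t-1}=\hat\xi_{t-1}^{(i)})$, turning $\phi(\hat\xi_{t-1}^{(i)},\xi_t^{(i)})$ into $H_{t-1}(\hat\xi_{t-1}^{(i)})$ with $H_{t-1}(x)=\E[|G_th_t|^2\mid X_{t-1}=x]$, and inside $B_1$ into its truncation $\psi_\delta(x)=\E[|G_th_t|^2\,\mathds{1}\{|G_th_t|^2\ge C^2/\delta^2\}\mid X_{t-1}=x]$; conditioning further on $\mathcal{F}_{t-1}$ uses $\Prob\{\hat\xi_{t-1}^{(i)}=\xi_{t-1}^{(j)}\mid\mathcal{F}_{t-1}\}=w_{t-1}^{(j,i)}/\hat w_{t-1}^{(i)}$ together with $\sum_i w_{t-1}^{(j,i)}=w_{t-1}^{(j)}$. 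For the complete resampling schemes of Theorem~\ref{multvar} one has $\hat w_{t-1}^{(i)}=\overline w_{t-1}$, and these computations collapse to $B_1=\overline w_{t-1}\cdot\tfrac{1}{N_0}\sum_j w_{t-1}^{(j)}\psi_\delta(\xi_{t-1}^{(j)})$ and $B_2=\mathds{1}\{\overline w_{t-1}>\delta\sqrt{N_0}\}\,\overline w_{t-1}\cdot\tfrac{1}{N_0}\sum_j w_{t-1}^{(j)}H_{t-1}(\xi_{t-1}^{(j)})$.

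To conclude, I would apply Theorem~\ref{weak} with horizon $t-1$ to the nonnegative functions $G_{t-1}\psi_\delta$, $G_{t-1}H_{t-1}$, and $G_{t-1}$, its hypotheses being met under the standing integrability assumptions together with the finite second moment $\E[\prod_{s=0}^{t-1}G_s|G_th_t|^2]<\infty$. This gives $\overline w_{t-1}\stackrel{\Prob}{\rightarrow}\E[\prod_{s=0}^{t-1}G_s]$, $\tfrac{1}{N_0}\sum_j w_{t-1}^{(j)}H_{t-1}(\xi_{t-1}^{(j)})\stackrel{\Prob}{\rightarrow}\E[\prod_{s=0}^{t-1}G_s|G_th_t|^2]$, and $\tfrac{1}{N_0}\sum_j w_{t-1}^{(j)}\psi_\delta(\xi_{t-1}^{(j)})\stackrel{\Prob}{\rightarrow}\E[\prod_{s=0}^{t-1}G_s|G_th_t|^2\,\mathds{1}\{|G_th_t|^2\ge C^2/\delta^2\}]$. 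Since $\overline w_{t-1}/\sqrt{N_0}\stackrel{\Prob}{\rightarrow}0$, the indicator in $B_2$ vanishes in probability while its companion factor is bounded in probability, so $B_2\stackrel{\Prob}{\rightarrow}0$; and for each $\epsilon>0$, $\limsup_{N_0\to\infty}\Prob\{B_1\ge\epsilon\}$ is at most $\mathds{1}\{\E[\prod_{s=0}^{t-1}G_s]\,\E[\prod_{s=0}^{t-1}G_s|G_th_t|^2\,\mathds{1}\{|G_th_t|^2\ge C^2/\delta^2\}]\ge\epsilon\}$, which equals $0$ once $\delta$ is chosen small enough, by dominated convergence against $\E[\prod_{s=0}^{t-1}G_s|G_th_t|^2]$. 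Because the quantity of interest is $\le B_1+B_2$, this closes the induction.

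The step I expect to be the main obstacle is the interchange of the $\mathcal{F}_{t-1}$-conditional expectation with convergence in probability: conditioning does not respect $\stackrel{\Prob}{\rightarrow}$, so one cannot simply ``condition and pass to the limit.'' The remedy is exactly the $B_1/B_2$ split along the \emph{$\mathcal{F}_{t-1}$-measurable} event $\{\hat w_{t-1}^{(i)}>\delta\sqrt{N_0}\}$, whose probability tends to $0$ because $\hat w_{t-1}^{(i)}/\sqrt{N_0}\stackrel{\Prob}{\rightarrow}0$ --- immediate in the complete case from $\overline w_{t-1}=O_{\Prob}(1)$, which is why completeness (as in Theorem~\ref{multvar}) is the natural setting here. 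Everything else reduces to the already-proved Theorem~\ref{weak} and a single dominated-convergence argument in the auxiliary parameter $\delta$.
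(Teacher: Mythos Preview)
Your argument is correct and follows the same overall shape as the paper's: in the complete case $\hat w_{t-1}^{(i)}=\overline w_{t-1}$, split the indicator according to the size of $\overline w_{t-1}$, control the ``large weight'' piece by Theorem~\ref{weak}, and kill the ``small weight'' piece by dominated convergence against $\E\bigl[\prod_{s=0}^{t-1}G_s\,|G_th_t|^2\bigr]$.

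The one substantive difference is where you put the cut. The paper truncates $\overline w_{t-1}$ at the \emph{fixed} level $2\E\bigl[\prod_{s=0}^{t-1}G_s\bigr]$; on that event the truncation on $|G_th_t|$ sits at $C\sqrt{N_0}\big/\bigl(2\E[\prod G_s]\bigr)$, which grows with $N_0$, so unbiasedness (Theorem~\ref{earlyunbiased}) plus Markov's inequality and a single application of DCT finishes the job without any auxiliary parameter. You instead truncate $\overline w_{t-1}$ at the \emph{growing} level $\delta\sqrt{N_0}$, which leaves the truncation on $|G_th_t|$ at the fixed level $C/\delta$; this forces you to invoke Theorem~\ref{weak} for $B_1$ (since $\psi_\delta$ is an $N_0$-independent test function) and then take a second limit $\delta\to 0$. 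Both routes are valid; the paper's is slightly shorter because it collapses your two limits into one. Incidentally, you never use the statement at $t-1$, only Theorem~\ref{weak} at horizon $t-1$, so ``induction'' is a misnomer here.
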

\begin{proof}
For the $t = 0$ case, use the Dominated Convergence Theorem.
For $1 \leq t \leq T$ and $C > 0$, define 
$\nu = \left|G_t h_t\right|^2 \mathds{1}\left\{\left|G_t h_t\right| \geq \frac{C \sqrt{N_0}}{2 \E\left[\prod_{s=0}^{t-1} G_s\right]}\right\}$.
Calculate
\begin{align*}
& \quad \Prob \left\{
\E\left[\left.\frac{1}{N_{0}} \sum_{i=1}^{N_t}
\left|w_t^{\left(i\right)} h_t\left(\xi_t^{\left(i\right)}\right) \right|^2
\mathds{1}\left\{\frac{1}{\sqrt{N_0}} 
\left|w_t^{\left(i\right)} h_t\left(\xi_t^{\left(i\right)}\right) \right| \geq C\right\} 
\right\rvert\mathcal{F}_{t-1}\right]
> \epsilon\right\} \\
& \leq 
\Prob \left\{\overline{w}_{t-1} > 2 \E\left[\prod_{s=0}^{t-1} G_s\right]\right\}
+ \frac{2}{\epsilon} \E\left[\prod_{s=0}^{t-1} G_s\right]
\E\left[\frac{\overline{w}_{t-1}}{N_0} \sum_{i=1}^{N_t} 
\nu \left(\hat{\xi}_{t-1}^{\left(i\right)}, \xi_t^{\left(i\right)}\right)\right] \\
& = \Prob \left\{\overline{w}_{t-1} > 2 \E\left[\prod_{s=0}^{t-1} G_s\right]\right\}
+ \frac{2}{\epsilon} \E\left[\prod_{s=0}^{t-1} G_s\right]
\E\left[\prod_{s=0}^{t-1} G_s \nu \right]
\end{align*}
Since $\overline{w}_{t-1} \stackrel{\Prob}{\rightarrow} \E\left[\prod_{s=0}^{t-1} G_s\right]$ by Theorem \ref{weak},
both terms vanish upon sending $N_0$ to infinity.
\end{proof}
\begin{lem}{\label{CLT}}
Assume $\E\left[\prod_{s=0}^t G_s\right] < \infty$ for $0 \leq t \leq T-1$, $\E\left|G_0 h_0\right|^2 < \infty$,
and $\E\left[\prod_{s=0}^t G_s \left|G_{t+1} h_{t+1}\right|^2\right]$ for $0 \leq t \leq T-1$.
Define
\begin{equation*}
\eta^2 = \Var\left[G_0 h_0\right] + \sum_{t=0}^{T-1} \hat{\eta}_t^2 \left[h_t\right]
+ \sum_{t=0}^{T-1} \E\left[\prod_{s=0}^t G_s\right] 
\E\left[\prod_{s=0}^t G_s \Var\left[\left.G_{t+1} h_{t+1} \right| X_t\right]\right]
\end{equation*}
where $\eta^2$ depends on numbers $\left(\hat{\eta}_t\left[h_t\right]^2\right)_{0 \leq 1 \leq T-1}$.

\begin{enumerate}[label = (\alph*)]
\item{\label{partone}}
If $N_0 \hat{V}^2_t\left[h_t\right] \stackrel{\Prob}{\rightarrow} \hat{\eta}_t^2\left[h_t\right]$ for each $0 \leq t \leq T-1$,
then 
\begin{equation*}
\sqrt{N_0} \left(\frac{\overline{w}_{T-1}}{N_0} \sum_{i=1}^{N_T} 
f\left(\hat{\xi}_{T-1}^{\left(i\right)}, \xi_T^{\left(i\right)}\right) - 
\E\left[\prod_{t=0}^{T-1} G_t f\right]\right) 
\stackrel{\mathcal{D}}{\rightarrow} N\left(0, \eta^2\right)
\end{equation*}
%If additionally functions $\left(G_t\right)_{0 \leq t \leq T-1}$ are bounded, then
%$\lim_{N_0 \rightarrow \infty} N_0 \Var\left[\hat{f}\right] = \eta^2$. 
\item{\label{parttwo}}
If $\lim_{N_0 \rightarrow \infty} \Prob\left\{N_0 \hat{V}^2_t\left[h_t\right] \geq \hat{\eta}_t^2\left[h_t\right] + \epsilon\right\} = 0$
for each $0 \leq t \leq T-1$ and $\epsilon > 0$,
\begin{equation*}
\left|\frac{\overline{w}_{T-1}}{N_0} \sum_{i=1}^{N_T} 
f\left(\hat{\xi}_{T-1}^{\left(i\right)}, \xi_T^{\left(i\right)}\right) - 
\E\left[\prod_{t=0}^{T-1} G_t f\right]\right|
\lesssim \frac{\eta}{\sqrt{N_0}}
\end{equation*}
%If additionally functions $\left(G_t\right)_{0 \leq t \leq T-1}$ are bounded, 
%$\limsup_{N_0 \rightarrow \infty} N_0 \Var\left[\hat{f}\right] \leq \eta^2$. 
\end{enumerate}
\end{lem}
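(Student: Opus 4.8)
The plan is to realize $\sqrt{N_0}$ times the SMC error as a single triangular array of martingale differences and feed it into Lemma~\ref{tools}: part~\ref{parta} will give the CLT of \ref{partone}, and part~\ref{partb} will give the asymptotic-error bound of \ref{parttwo}. By Theorem~\ref{unbiased} together with the conventions $h_T\equiv 1$, $G_T=f$, the quantity $\tfrac{\overline w_{T-1}}{N_0}\sum_i f(\hat\xi_{T-1}^{(i)},\xi_T^{(i)})-\E[\prod_{t=0}^{T-1}G_t f]$ equals $M_T-M_{-1}$. First I would work on the fine filtration $(\mathcal F^{(i)}_t)$ and the martingale $M^{(i)}_t$ of \eqref{martrep2} already introduced for Theorem~\ref{weak} — which reveals, at each time $t\ge 1$, the pairs $(\hat\xi^{(i)}_{t-1},\xi^{(i)}_t)$ one index $i$ at a time, and at $t=0$ the particles $\xi^{(i)}_0$ one at a time — and telescope, writing $S_{n,k_n}=\sqrt{N_0}(M_T-M_{-1})=\sum_j Y_{nj}$, a sum of conditional-mean-zero differences (the $W_t$ contribute nothing since $M$ does not depend on them).

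The core computation is the aggregated conditional variance $V^2_{n,k_n}=\sum_j\Var[Y_{nj}\mid\mathcal G_{n,j-1}]$. Splitting each pair-increment by the law of total variance — conditioning first on $\hat\xi^{(i)}_t$ — separates a resampling contribution from a mutation contribution. The initialization block equals $\Var[G_0 h_0]$ exactly. The resampling contributions at time $t$ sum, by conditional independence of the draws $\hat\xi^{(i)}_t$ given $\mathcal F_t$, to exactly $N_0\hat V^2_t[h_t]$, which by hypothesis converges in probability to $\hat\eta^2_t[h_t]$ in case \ref{partone} and is at most $\hat\eta^2_t[h_t]+\epsilon$ with probability tending to one in case \ref{parttwo}. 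The mutation contributions at time $t$, once the resampling draw has been integrated out of the conditional variance, collapse to $\overline w_t\cdot\tfrac1{N_0}\sum_k w^{(k)}_t\nu_t(\xi^{(k)}_t)$ with $\nu_t(x)=\Var[G_{t+1}h_{t+1}\mid X_t=x]$; this is $\overline w_t$ times an SMC estimate of $\E[\prod_{s=0}^t G_s\nu_t]$, which is finite since $\nu_t\le\E[|G_{t+1}h_{t+1}|^2\mid X_t]$, so by Theorem~\ref{weak} it converges in probability to $\E[\prod_{s=0}^t G_s]\,\E[\prod_{s=0}^t G_s\nu_t]$. Adding the three groups, $V^2_{n,k_n}\stackrel{\Prob}{\to}\eta^2$ in case \ref{partone}, and $\Prob\{V^2_{n,k_n}>\eta^2+\epsilon\}\to 0$ in case \ref{parttwo}.

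For \ref{partone} it remains to verify the conditional Lindeberg condition $\sum_j\E[Y^2_{nj}\mathds 1\{|Y_{nj}|>C\}\mid\mathcal G_{n,j-1}]\stackrel{\Prob}{\to}0$ for every $C>0$. Writing the rescaled $i$th pair-increment at time $t$ as $\tfrac1{\sqrt{N_0}}\bigl(w^{(i)}_{t+1}h_{t+1}(\xi^{(i)}_{t+1})-\overline w_t\,\E[h_t(\hat\xi^{(i)}_t)\mid\mathcal F_t]\bigr)$, the first summand is exactly of the form controlled by Lemma~\ref{lemma2} (applied at time $t+1$), and together with the analogous $t=0$ term this disposes of the ``mutation/initialization'' part of the Lindeberg sum. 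The remaining ``resampling'' part is governed by the size of $\tfrac{\overline w_t}{\sqrt{N_0}}h_t$ evaluated at the resampled particles; after a Jensen bound it reduces to showing $\tfrac1{N_0}\sum_k w^{(k)}_t\,h_t(\xi^{(k)}_t)^2\,\mathds 1\{|h_t(\xi^{(k)}_t)|\gtrsim\sqrt{N_0}\}\stackrel{\Prob}{\to}0$, which I would establish by a weighted-Lindeberg argument modelled on Lemma~\ref{lemma2}, combined with the maximal-weight estimate $\max_k w^{(k)}_t/N_0\stackrel{\Prob}{\to}0$ obtained as in the proof of Lemma~\ref{weaklemma}, using the integrability furnished by the hypotheses and the finiteness of $\hat\eta^2_t[h_t]$. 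Part~\ref{parttwo} needs no Lindeberg condition at all, as Lemma~\ref{tools}\ref{partb} asks only for the one-sided control of $V^2_{n,k_n}$ just obtained.

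To finish, when $\eta>0$ apply Lemma~\ref{tools}\ref{parta} to $S_{n,k_n}/\eta$ to get $\sqrt{N_0}(M_T-M_{-1})/\eta\stackrel{\mathcal D}{\to}N(0,1)$, i.e.\ \ref{partone}; apply Lemma~\ref{tools}\ref{partb} to $S_{n,k_n}/\eta$ to get $|\sqrt{N_0}(M_T-M_{-1})/\eta|\lesssim 1$, i.e.\ \ref{parttwo}; the degenerate case $\eta=0$ is immediate, since then $V^2_{n,k_n}\stackrel{\Prob}{\to}0$ forces $S_{n,k_n}\stackrel{\Prob}{\to}0$. I expect the main obstacle to be the resampling part of the conditional Lindeberg condition in the absence of any boundedness of $h_t$ — controlling $h_t$ at the particles inside the truncated second moment — together with the bookkeeping needed to check that the law-of-total-variance split reproduces exactly $N_0\hat V^2_t[h_t]$ plus the stated mutation terms; everything else is a routine assembly of the martingale limit theorems in Lemma~\ref{tools}.
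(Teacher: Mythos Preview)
Your plan is essentially the paper's proof: both feed the fine martingale $M_t^{(i)}$ of \eqref{martrep2} into Lemma~\ref{tools}, decompose the summed conditional variance via the law of total variance into the initialization term $\Var[G_0h_0]$, the resampling terms $N_0\hat V_t^2[h_t]$, and the mutation terms handled by Theorem~\ref{weak}. The only substantive difference is your Lindeberg verification. The paper does not split the increment into a ``mutation'' piece and a ``resampling'' piece; instead it invokes Dvoretzky's inequality \cite{dvoretzky1972asymptotic}, which bounds the truncated second moment of the \emph{centered} increment $M_t^{(i)}-M_t^{(i-1)}$ by four times the truncated second moment of the \emph{uncentered} term $\tfrac{1}{N_0}w_t^{(i)}h_t(\xi_t^{(i)})$ with half the threshold --- and the latter is exactly what Lemma~\ref{lemma2} controls. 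This collapses your anticipated ``main obstacle'' to a single line. Your hands-on route can be made to work (the integrability $\E[\prod_{s=0}^t G_s\,h_t^2]<\infty$ you would need for the centering term does follow from the hypotheses, by Jensen applied to $h_t=\E[G_{t+1}h_{t+1}\mid X_t]$), but it re-derives by bare hands what Dvoretzky's inequality delivers immediately, and your reduction to $\tfrac{1}{N_0}\sum_k w_t^{(k)}h_t(\xi_t^{(k)})^2\mathds 1\{|h_t(\xi_t^{(k)})|\gtrsim\sqrt{N_0}\}$ still needs the accompanying maximal estimate to be made precise.
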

\begin{proof}
%[Proof of Proposition \ref{CLT}]
The proof uses Lemma \ref{tools} to analyze asymptotic behavior of the martingale $\frac{\sqrt{N_0}}{\eta} M_t^{\left(i\right)}$, where $M_t^{\left(i\right)}$ is defined in equation \eqref{martrep2}.
First compute the sum of conditional variances
\begin{align*}
& \quad \sum_{t=0}^T \sum_{i=1}^{N_t} \Var\left[\left.M_t^{\left(i\right)} \right| \mathcal{F}_{t-1}^{\left(i-1\right)}\right]
= \frac{\Var\left[G_0 h_0\right]}{N_0}
+ \sum_{t=1}^T \Var\left[\left.M_t \right| \mathcal{F}_{t-1}\right] \\
& = \frac{\Var\left[G_0 h_0\right]}{N_0}
+ \sum_{t=1}^T \left(\Var\left[\left.\E\left[\left.M_t \right| \hat{\mathcal{F}}_{t-1}\right] \right| \mathcal{F}_{t-1}\right]
+ \E\left[\left.\Var\left[\left.M_t \right| \hat{\mathcal{F}}_{t-1}\right] \right| \mathcal{F}_{t-1}\right]\right) \\
& = \frac{\Var\left[G_0 h_0\right]}{N_0}
+ \sum_{t=1}^T \Var\left[\left. \frac{\overline{w}_t}{N_0} \sum_{i=1}^{N_t} h_t\left(\xi_t^{\left(i\right)}\right) \right| \mathcal{F}_{t-1}\right] \\
& + \sum_{t=1}^T \E\left[\left.
\frac{\overline{w}_{t-1}^2}{N_0^2} \sum_{i=1}^{N_t} \Var\left[\left.G_t h_t\right| X_{t-1} = \hat{\xi}_{t-1}^{\left(i\right)}\right] \right| \mathcal{F}_{t-1}\right] \\
& = \frac{\Var\left[G_0 h_0\right]}{N_0} + \sum_{t=1}^T \hat{V}_t^2\left[h_t\right]
+ \sum_{t=1}^T \frac{\overline{w}_{t-1}}{N_0^2} \sum_{i=1}^{N_{t-1}} w_t^{\left(i\right)} \Var\left[\left.G_t h_t\right| X_{t-1} = \xi_{t-1}^{\left(i\right)}\right]
\end{align*}
Theorem \ref{weak} shows that $\overline{w}_t \stackrel{\Prob}{\rightarrow} \E\left[\prod_{s=0}^{t-1} G_s\right]$
and 
\begin{equation*}
\frac{1}{N_0} \sum_{i=1}^{N_{t-1}} w_t^{\left(i\right)} \Var\left[\left.G_t h_t\right| X_{t-1} = \xi_{t-1}^{\left(i\right)}\right]
\stackrel{\Prob}{\rightarrow}
\E\left[\prod_{s=0}^{t-1} G_s \Var\left[\left.G_t h_t \right| X_{t-1}\right]\right]
\end{equation*}
Next, for $C > 0$ and $0 \leq t \leq T$, a useful inequality of \citet{dvoretzky1972asymptotic} gives 
\begin{align*}
& \quad N_0 \sum_{i=1}^{N_t} \E\left[\left.
\left|M_t^{\left(i\right)} - M_t^{\left(i-1\right)}\right|^2
\mathds{1}\left\{\sqrt{N_0}\left|M_t^{\left(i\right)} - M_t^{\left(i-1\right)}\right| \geq C\right\} 
\right| \mathcal{F}_t^{\left(i-1\right)}\right] \\
& \leq 4
\E\left[\left. \frac{1}{N_0} \sum_{i=1}^{N_t}
\left| w_t^{\left(i\right)} h_t\left(\xi_t^{\left(i\right)}\right) \right|^2
\mathds{1}\left\{\frac{1}{\sqrt{N_0}} w_t^{\left(i\right)} 
\left| h_t\left(\xi_t^{\left(i\right)}\right) \right|\geq \frac{C}{2}\right\} 
\right\rvert\mathcal{F}_{t-1} \right]
\end{align*}
This last term vanishes upon sending $N_0$ to infinity by Lemma \ref{lemma2}. 
%Combining \eqref{statement1}, \eqref{statement2}, Lemma \ref{lemma3}, and Theorem \ref{tools} gives the required result.
Thus, the conditions of Lemma \ref{tools} are satisfied, and parts \ref{partone} and \ref{parttwo} follow.
\end{proof}

%As a last step before proving Theorem \ref{multvar},
%two technical lemmas are required:
\begin{lem}{\label{technical1}}
Assume $\E\left[\prod_{s=0}^t G_s\right] < \infty$ for $0 \leq t \leq T-1$ and assume
$\E\left|\prod_{t=0}^T G_t f\right| < \infty$.
Set
$\tilde{G}_T = \E\left[\prod_{t=0}^{T-1} G_t \right] G_T \slash \E\left[\prod_{t=0}^T G_t\right]$.
Then
\begin{multline}{\label{awful1}}
\frac{\overline{w}_{T-1}}{N_0} \sum_{i=1}^{N_T} 
\left\{\frac{w_T^{\left(i\right)}}{\overline{w}_T}\right\} 
\left(1 - \left\{\frac{w_T^{\left(i\right)}}{\overline{w}_T}\right\}\right)
f\left(\xi_t^{\left(i\right)}\right) \\
\stackrel{\Prob}{\rightarrow} 
\E\left[\prod_{t=0}^{T-1} G_t \left\{\tilde{G}_T\right\}\left(1 - \left\{\tilde{G}_T\right\}\right) f \right]
\end{multline}
If additionally $\E\left[\prod_{t=0}^{T-1} G_t \mathds{1}\left\{\tilde{G}_T \in \left\{1, 2, \ldots\right\}\right\}\right] = 0$,
then
\begin{equation}{\label{awful2}}
\frac{\overline{w}_{T-1}}{N_0} \sum_{i=1}^{N_T} 
\left\{\frac{w_T^{\left(i\right)}}{\overline{w}_T}\right\}
f\left(\xi_T^{\left(i\right)}\right) 
\stackrel{\Prob}{\rightarrow} 
\E\left[\prod_{t=0}^{T-1} G_t \left\{\tilde{G}_T\right\} \nu\right]
\end{equation}
\end{lem}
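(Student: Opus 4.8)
The plan is to reduce both \eqref{awful1} and \eqref{awful2} to Theorem \ref{weak}, with the only delicate point being the discontinuity of the fractional part at the integers. The first step uses completeness (Assumption \ref{assumption2}): since every updated weight at time $T-1$ equals $\overline{w}_{T-1}$, we have $w_T^{\left(i\right)}=\overline{w}_{T-1}\,G_T\big(\hat{\xi}_{T-1}^{\left(i\right)},\xi_T^{\left(i\right)}\big)$ and $\overline{w}_T=\overline{w}_{T-1}\cdot\frac1{N_0}\sum_i G_T\big(\hat{\xi}_{T-1}^{\left(i\right)},\xi_T^{\left(i\right)}\big)$, hence
\[
\frac{w_T^{\left(i\right)}}{\overline{w}_T}=\lambda_{N_0}\,\tilde{G}_T^{\left(i\right)},\qquad \lambda_{N_0}=\frac{\overline{w}_{T-1}\,\E\big[\textstyle\prod_{t=0}^{T}G_t\big]}{\overline{w}_T\,\E\big[\textstyle\prod_{t=0}^{T-1}G_t\big]},
\]
where I abbreviate $\tilde{G}_T^{\left(i\right)}=\tilde{G}_T\big(\hat{\xi}_{T-1}^{\left(i\right)},\xi_T^{\left(i\right)}\big)$ and $f^{\left(i\right)}=f\big(\hat{\xi}_{T-1}^{\left(i\right)},\xi_T^{\left(i\right)}\big)$. (These quotients are nonnegative since $f\geq0$ here; and one may assume $\E[\prod_{t=0}^{T}G_t]>0$, otherwise $\prod_{t=0}^{T-1}G_t f\equiv0$ a.s.\ and both sides of \eqref{awful1} and \eqref{awful2} are zero.) Because $\overline{w}_t\stackrel{\Prob}{\to}\E[\prod_{s=0}^{t}G_s]$ for $0\leq t\leq T$ by Theorem \ref{weak} — the integrability $\E|\prod_{t=0}^{T-1}G_t f|<\infty$ following from the lemma's hypotheses together with Cauchy--Schwarz — one obtains $\lambda_{N_0}\stackrel{\Prob}{\to}1$.

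The second step applies Theorem \ref{weak} once more, now with the test function $\phi(\tilde{G}_T)\,f$ for an arbitrary bounded measurable $\phi\colon[0,\infty)\to\mathbb{R}$ (admissible since $|\phi(\tilde{G}_T)f|\leq\|\phi\|_\infty|f|$). Using completeness again, this reads
\[
\frac{\overline{w}_{T-1}}{N_0}\sum_i\phi\big(\tilde{G}_T^{\left(i\right)}\big)f^{\left(i\right)}\;\stackrel{\Prob}{\to}\;\E\!\left[\prod_{t=0}^{T-1}G_t\,\phi(\tilde{G}_T)\,f\right].
\]
Taking $\phi(s)=\{s\}(1-\{s\})$ reproduces the right-hand side of \eqref{awful1}, and $\phi(s)=\{s\}$ reproduces $\E[\prod_{t=0}^{T-1}G_t\{\tilde{G}_T\}f]$, the right-hand side of \eqref{awful2} (with $\nu=f$). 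Since the left-hand sides of \eqref{awful1} and \eqref{awful2} are these same sums evaluated at $\lambda_{N_0}\tilde{G}_T^{\left(i\right)}$ in place of $\tilde{G}_T^{\left(i\right)}$, everything reduces to showing $\frac{\overline{w}_{T-1}}{N_0}\sum_i\big(\phi(\lambda_{N_0}\tilde{G}_T^{\left(i\right)})-\phi(\tilde{G}_T^{\left(i\right)})\big)f^{\left(i\right)}\stackrel{\Prob}{\to}0$.

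For \eqref{awful1} this is routine, because $\phi(s)=\{s\}(1-\{s\})$ is $1$-periodic and continuous, hence bounded by $\tfrac14$ and uniformly continuous; write $\omega_\phi$ for its (bounded) modulus of continuity. Split the sum at a threshold $V$. The part over $\{\tilde{G}_T^{\left(i\right)}>V\}$ is at most $\tfrac14\,\frac{\overline{w}_{T-1}}{N_0}\sum_{\tilde{G}_T^{\left(i\right)}>V}|f^{\left(i\right)}|$, which converges by Theorem \ref{weak} to $\tfrac14\,\E[\prod_{t=0}^{T-1}G_t\mathds{1}\{\tilde{G}_T>V\}|f|]$ and then vanishes as $V\to\infty$ by dominated convergence; the part over $\{\tilde{G}_T^{\left(i\right)}\leq V\}$ is at most $\omega_\phi\!\big(|\lambda_{N_0}-1|V\big)\cdot\frac{\overline{w}_{T-1}}{N_0}\sum_i|f^{\left(i\right)}|$, which for fixed $V$ is negligible as $N_0\to\infty$ since $\lambda_{N_0}\stackrel{\Prob}{\to}1$ and $\frac{\overline{w}_{T-1}}{N_0}\sum_i|f^{\left(i\right)}|\stackrel{\Prob}{\to}\E[\prod_{t=0}^{T-1}G_t|f|]$.

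The main obstacle is \eqref{awful2}: now $\phi(s)=\{s\}$ jumps at the integers, so rescaling $\tilde{G}_T^{\left(i\right)}$ by $\lambda_{N_0}\approx1$ can change $\phi$ by nearly $1$ for particles with $\tilde{G}_T^{\left(i\right)}$ near an integer — which is exactly what the extra hypothesis $\E[\prod_{t=0}^{T-1}G_t\mathds{1}\{\tilde{G}_T\in\{1,2,\dots\}\}]=0$ is there to control. I would split the particles into three groups. (a) $\tilde{G}_T^{\left(i\right)}>V$: handled as in the previous paragraph, using $|\phi(\lambda s)-\phi(s)|\leq1$. (b) $\tilde{G}_T^{\left(i\right)}\leq V$ with $\operatorname{dist}(\tilde{G}_T^{\left(i\right)},\mathbb{Z})\leq\delta$: the total contribution is at most $\frac{\overline{w}_{T-1}}{N_0}\sum_{(b)}|f^{\left(i\right)}|$, which converges (Theorem \ref{weak}) to $\E[\prod_{t=0}^{T-1}G_t\mathds{1}\{\operatorname{dist}(\tilde{G}_T,\mathbb{Z})\leq\delta\}|f|]$, and this decreases as $\delta\downarrow0$ to $\E[\prod_{t=0}^{T-1}G_t\mathds{1}\{\tilde{G}_T\in\mathbb{Z}\}|f|]=0$, because the hypothesis forces $\prod_{t=0}^{T-1}G_t\mathds{1}\{\tilde{G}_T\in\{1,2,\dots\}\}=0$ a.s.\ while $f=0$ on $\{\tilde{G}_T=0\}$. (c) $\tilde{G}_T^{\left(i\right)}\leq V$ with $\operatorname{dist}(\tilde{G}_T^{\left(i\right)},\mathbb{Z})>\delta$: on the event $\{|\lambda_{N_0}-1|<\delta/V\}$ the numbers $\tilde{G}_T^{\left(i\right)}$ and $\lambda_{N_0}\tilde{G}_T^{\left(i\right)}$ lie in a common unit interval, so $|\phi(\lambda_{N_0}\tilde{G}_T^{\left(i\right)})-\phi(\tilde{G}_T^{\left(i\right)})|<\delta$ and the contribution is at most $\delta\,\frac{\overline{w}_{T-1}}{N_0}\sum_i|f^{\left(i\right)}|$, which converges in probability to $\delta\,\E[\prod_{t=0}^{T-1}G_t|f|]$. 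Given $\eta>0$, one picks $V$ large to control (a), then $\delta$ small to control (b) and (c), then $N_0$ large enough that $\Prob\{|\lambda_{N_0}-1|\geq\delta/V\}<\eta$; letting $\eta\downarrow0$ gives the required convergence in probability and hence \eqref{awful2}.
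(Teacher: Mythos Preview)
Your strategy matches the paper's: apply Theorem~\ref{weak} with the test function $\phi(\tilde G_T)f$ to obtain the target limit, then show that replacing $\tilde G_T^{(i)}$ by $\lambda_{N_0}\tilde G_T^{(i)}$ is harmless because $\lambda_{N_0}\stackrel{\Prob}{\to}1$. For \eqref{awful1} the paper is more direct than your truncation at $V$: since $L(x)=\{x\}(1-\{x\})$ is globally $1$-Lipschitz, one has $|L(\lambda s)-L(s)|\le |\lambda-1|\,s$, so the difference term is bounded by $|\lambda_{N_0}-1|\cdot\frac{\overline w_{T-1}}{N_0}\sum_i \tilde G_T^{(i)}|f^{(i)}|$, and a single Markov bound plus $\lambda_{N_0}\stackrel{\Prob}{\to}1$ finishes it without any threshold. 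For \eqref{awful2} the paper simply cites \citet{douc2008limit}, so your explicit three-group argument is a welcome addition.

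There is one genuine slip in your handling of \eqref{awful2}. In case~(b) you send $\delta\downarrow 0$ to reach $\E\big[\prod_{t=0}^{T-1}G_t\,\mathds{1}\{\tilde G_T\in\mathbb Z\}\,|f|\big]$ and then claim this vanishes because ``$f=0$ on $\{\tilde G_T=0\}$''. That claim is false in general: nothing in the hypotheses ties $f$ to $G_T$. The fix is simply to exclude $0$ from case~(b): the map $s\mapsto\{s\}$ is continuous on $[0,1)$, so its only discontinuities in $[0,V]$ are at $\{1,2,\dots,\lfloor V\rfloor\}$. Define case~(b) as $\operatorname{dist}(\tilde G_T^{(i)},\{1,\dots,\lfloor V\rfloor\})\le\delta$; its limit as $\delta\downarrow 0$ is then $\E\big[\prod_{t=0}^{T-1}G_t\,\mathds{1}\{\tilde G_T\in\{1,\dots,\lfloor V\rfloor\}\}\,|f|\big]=0$ by the extra hypothesis, and case~(c) (now including a neighbourhood of $0$) goes through unchanged since on $\{|\lambda_{N_0}-1|<\delta/V\}$ both $\tilde G_T^{(i)}$ and $\lambda_{N_0}\tilde G_T^{(i)}$ lie in the same unit interval. (Two smaller points: the parenthetical ``these quotients are nonnegative since $f\ge 0$'' is confused---$\lambda_{N_0}$ is nonnegative because the $G_t$ are, independently of $f$; and in your bound for the $\{\tilde G_T^{(i)}>V\}$ part of \eqref{awful1} the constant should be $\tfrac12$, not $\tfrac14$, since $|\phi|\le\tfrac14$ gives $|\phi(\lambda s)-\phi(s)|\le\tfrac12$.)
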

\begin{proof}
For a proof of equation \eqref{awful2} and a special case of equation \eqref{awful1}, see \citet{douc2008limit}. 
To prove the more general case of \eqref{awful1},
first define
$L\left(x\right) = \left\{x\right\} - \left\{x\right\}^2$.
By Theorem \ref{weak},
\begin{equation*}
\frac{\overline{w}_{T-1}}{N_0} \sum_{i=1}^{N_T}
L\left(\tilde{G}_T\left(\hat{\xi}_{T-1}^{\left(i\right)}, \xi_T^{\left(i\right)}\right)\right)
f\left(\xi_T^{\left(i\right)}\right)
\stackrel{\Prob}{\rightarrow}
\E\left[\prod_{t=0}^{T-1} G_t \left\{\tilde{G}_T\right\}\left(1 - \left\{\tilde{G}_T\right\}\right) f\right]
\end{equation*}
Thus, it suffices to show
\begin{equation*}
\frac{\overline{w}_{T-1}}{N_0} \sum_{i=1}^{N_T}
\left|L\left(\frac{w_T^{\left(i\right)}}{\overline{w}_T}\right) 
- L\left(\tilde{G}_T\left(\hat{\xi}_{T-1}^{\left(i\right)}, \xi_T^{\left(i\right)}\right)\right)\right|
\left|f\left(\xi_T^{\left(i\right)}\right)\right|
\stackrel{\Prob}{\rightarrow} 0
\end{equation*}
Since $x \mapsto L\left(x\right)$ has Lipschitz constant $1$,
for $\epsilon > 0$ and $\delta > 0$ it follows that
\begin{align*}
& \quad \Prob \left\{
\frac{\hat{w}_{t-1}}{N_0} \sum_{i=1}^{N_t}
\left|L\left(\frac{w_t^{\left(j\right)}}{\hat{w}_t}\right) - 
L\left(\tilde{G}_t\left(\hat{\xi}_{t-1}^{\left(i\right)}, \xi_t^{\left(i\right)}\right)\right)\right|
\left|f\left(\xi_t^{\left(i\right)}\right)\right|
 > \epsilon \right\} \\
& \leq \Prob \left\{
\left|\frac{\overline{w}_{T-1}}{\overline{w}_T}
\frac{\E\left[\prod_{t=0}^T G_t\right]}{\E\left[\prod_{t=0}^{T-1} G_t\right]} - 1\right| > \delta \right\} \\
& + \frac{\delta}{\epsilon} 
\E \left[\frac{\hat{w}_{T-1}}{N_0} \sum_{i=1}^{N_T}
\tilde{G}_T\left(\hat{\xi}_{T-1}^{\left(i\right)}, \xi_T^{\left(i\right)}\right)
\left|f\left(\xi_T^{\left(i\right)}\right)\right|\right] \\
& = \Prob \left\{
\left|\frac{\overline{w}_{T-1}}{\overline{w}_T}
\frac{\E\left[\prod_{t=0}^T G_t\right]}{\E\left[\prod_{t=0}^{T-1} G_t\right]} - 1\right| > \delta \right\}
+ \frac{\delta}{\epsilon} 
\E \left[\prod_{s=0}^{t-1} G_s \tilde{G}_t f \right]
\end{align*}
Both terms vanish upon sending $N_0$ to infinity and then $\delta$ to $0$.
\end{proof}

\begin{lem}{\label{technical2}}
Assume $\E\left[\prod_{s=0}^t G_s\right] < \infty$ for $0 \leq t \leq T-1$
and assume $\E\left[\prod_{t=0}^T G_t f^2\right] < \infty$.
At resampling step $T$, assume particles are sorted by a coordinate $\theta_T$
and then stratified or stratified residual resampling is used.
Then for any $p \colon \mathbb{R} \rightarrow \mathbb{R}$
with $\E\left[\prod_{t=0}^T G_t \left|p\left(\theta_T\right)\right|^2\right] < \infty$,
\begin{align*}
& \limsup_{N_0 \rightarrow \infty} \Prob\left\{N_0 \hat{V}_T^2\left[f\right]
> \left(1 + \epsilon\right) N_0 \hat{V}_T^2\left[f - p\left(\theta_T\right)\right] + \epsilon \right\} < \epsilon,
& \epsilon > 0
\end{align*}
\end{lem}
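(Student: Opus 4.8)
The plan is to isolate the contribution of $p(\theta_T)$ by an elementary pathwise inequality and then to show that the residual term $N_0\hat{V}_T^2[p(\theta_T)]$ is asymptotically negligible precisely because the particles have been sorted by $\theta_T$. We may assume $f\geq 0$ (otherwise treat its positive and negative parts separately). Let $Q_T$ denote the $\theta_T$-sorted quantile function of the pre-resampling ensemble at step $T$ and let $I_j=\left[(j-1)/N_0,\,j/N_0\right]$ be the $j$th stratum. Specializing the quadratic form of Lemma \ref{simplelemma} to the staircase resampling matrix of stratified resampling yields the classical within-stratum variance identity
\begin{equation*}
N_0\,\hat{V}_T^2[\psi]=\overline{w}_T^2\sum_j\left|I_j\right|\Var_{I_j}\!\left[\psi\circ Q_T\right]
\end{equation*}
for every test function $\psi$ vanishing on the coffin state (for stratified residual resampling the strata are taken over the residual measure $\left\{w_T^{(i)}/\overline{w}_T\right\}$, with a prefactor of the same order). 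Since $\psi\mapsto\psi\circ Q_T$ is linear and $\Var_{I_j}[A+B]\leq(1+\epsilon)\Var_{I_j}[A]+(1+\epsilon^{-1})\Var_{I_j}[B]$, taking $A=(f-p(\theta_T))\circ Q_T$, $B=p(\theta_T)\circ Q_T$ and summing over $j$ gives the pathwise bound
\begin{equation*}
N_0\hat{V}_T^2[f]\leq(1+\epsilon)\,N_0\hat{V}_T^2[f-p(\theta_T)]+(1+\epsilon^{-1})\,N_0\hat{V}_T^2[p(\theta_T)].
\end{equation*}
The event in the statement is therefore contained in $\left\{(1+\epsilon^{-1})N_0\hat{V}_T^2[p(\theta_T)]>\epsilon\right\}$, so it suffices to prove $N_0\hat{V}_T^2[p(\theta_T)]\stackrel{\Prob}{\rightarrow}0$.

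For this I would first reduce to a continuous approximant. The function $p$ lies in $L^2$ of the finite weighted measure that $\theta_T$ carries $\prod_{t=0}^{T}G_t\,d\Prob$ onto, so by density of continuous compactly supported functions there is $q\in C_c(\mathbb{R})$ with $\E\!\left[\prod_{t=0}^{T}G_t\,\bigl|p(\theta_T)-q(\theta_T)\bigr|^2\right]<(\epsilon')^2$; set $r=p-q$. Since $\psi\mapsto\bigl(N_0\hat{V}_T^2[\psi]\bigr)^{1/2}$ is a seminorm, it is enough to bound $N_0\hat{V}_T^2[q(\theta_T)]$ and $N_0\hat{V}_T^2[r(\theta_T)]$ separately. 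For the remainder term the crude estimate $N_0\hat{V}_T^2[r(\theta_T)]\leq\overline{w}_T^2\int_0^1\bigl|r(\theta_T(Q_T(u)))\bigr|^2\,du$ together with Theorem \ref{weak} shows that it converges in probability to $\E\!\left[\prod_{t=0}^{T}G_t\right]\E\!\left[\prod_{t=0}^{T}G_t|r(\theta_T)|^2\right]$, which is at most a fixed multiple of $(\epsilon')^2$; since $\epsilon'$ is arbitrary, this contribution is negligible.

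The continuous term is where the sorting enters. Because the particles have been sorted by $\theta_T$, the map $u\mapsto\theta_T(Q_T(u))$ is monotone, so on $I_j$ it ranges over an interval $[a_j,a_{j+1}]$ with $a_1\leq a_2\leq\cdots$; hence, writing $\omega_q$ for the modulus of continuity of $q$ and fixing $[-L,L]\supseteq\operatorname{supp}q$, we have $\Var_{I_j}[q(\theta_T)\circ Q_T]\leq\tfrac14\,\omega_q(a_{j+1}-a_j)^2$, and this vanishes whenever the $\theta_T$-range of stratum $j$ misses $[-L,L]$. For a threshold $\gamma>0$, monotonicity of $(a_j)$ forces all but at most $2L/\gamma+O(1)$ of the strata meeting $[-L,L]$ to satisfy $a_{j+1}-a_j\leq\gamma$, and bounding each exceptional stratum crudely by $\|q\|_\infty^2/N_0$ gives
\begin{equation*}
N_0\hat{V}_T^2[q(\theta_T)]\leq\overline{w}_T^2\left(\tfrac14\,\omega_q(\gamma)^2+\frac{2L/\gamma+O(1)}{N_0}\,\|q\|_\infty^2\right).
\end{equation*}
Letting $N_0\to\infty$ and then $\gamma\to0$, uniform continuity of $q$ yields $N_0\hat{V}_T^2[q(\theta_T)]\stackrel{\Prob}{\rightarrow}0$; together with the remainder bound this proves $N_0\hat{V}_T^2[p(\theta_T)]\stackrel{\Prob}{\rightarrow}0$. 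The stratified residual case is identical after replacing the full ensemble by the residual block, which is resampled in the same $\theta_T$-sorted order.

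The main obstacle is this last estimate and, inside it, the combinatorial observation that once the particles are sorted by $\theta_T$ the stratum endpoints $a_j$ are themselves monotone, so only a number of strata bounded independently of $N_0$ can carry an appreciable $\theta_T$-oscillation; all the remaining oscillation must be absorbed into the modulus of continuity of the $L^2$-approximant $q$, which is exactly what forces both the density reduction and the appeal to Theorem \ref{weak} on the $L^2$-tail. The pathwise quasi-triangle inequality of the first step and the seminorm structure of $\hat{V}_T^2$ are what make it legitimate to carry out these approximations one test function at a time.
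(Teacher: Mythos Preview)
Your argument is correct and follows essentially the same strategy as the paper: split $f=(f-p(\theta_T))+p(\theta_T)$ via Cauchy-with-$\epsilon$, approximate $p$ by some $q\in C_c(\mathbb{R})$ in the weighted $L^2$ space, control the $C_c$ piece using the monotonicity of $u\mapsto\theta_T(Q_T(u))$ induced by sorting, and control the $L^2$-remainder by the crude second-moment bound plus Theorem~\ref{weak}. The only noteworthy difference is in the estimate for the continuous piece: the paper observes directly that $\sum_j\bigl(\sup_{[a_j,a_{j+1}]}q-\inf_{[a_j,a_{j+1}]}q\bigr)\leq V(q)$, the total variation of $q$, which yields $N_0\hat V_T^2[q(\theta_T)]=O(1/N_0)$ in one line without introducing a threshold $\gamma$ or counting exceptional strata; your modulus-of-continuity route reaches the same conclusion but with a little more bookkeeping. (Two incidental remarks: the reduction to $f\geq 0$ at the start is never used and can be dropped; and your conclusion $N_0\hat V_T^2[p(\theta_T)]\stackrel{\Prob}{\to}0$ is actually slightly stronger than the stated $\limsup<\epsilon$, which is fine.)
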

\begin{proof}
Fix $\delta > 0$ and select $\eta \in C_c\left(\mathbb{R}\right)$, which approximates $p$ so that
$\E \left[\prod_{t=0}^T G_t \left|\eta \left(\theta_T\right) - p\left(\theta_T\right)\right|^2\right] < \delta$.
Applying Cauchy's inequality with $\epsilon$,
\begin{equation*}
\hat{V}_T^2\left[f\right] \leq
\left(2 + \frac{2}{\epsilon}\right) 
\left(\hat{V}_T^2\left[\eta\left(\theta_T\right)\right] 
+ \hat{V}_T^2\left[\eta\left(\theta_T\right) - p\left(\theta_T\right)\right]\right)
+ \left(1 + \epsilon\right) \hat{V}_T^2\left[f - p\left(\theta_T\right)\right]
\end{equation*}
To prove the result it suffices to bound
$\hat{V}_T^2\left[\eta\left(\theta_T\right)\right]$
and $\hat{V}_T^2\left[\eta\left(\theta_T\right) - p\left(\theta_T\right)\right]$.
First bound $\hat{V}_T^2\left[\eta\left(\theta_T\right)\right]$.
On the event that $\hat{w}_T \leq 2\E\left[\prod_{t=0}^T G_t\right]$, it follows
\begin{align*}
& \quad \hat{V}_T^2\left[\eta\left(\theta_T\right)\right] 
\leq \left(2\E\left[\prod_{t=0}^T G_t\right]\right)^2
\Var\left[\left.\frac{1}{N_0} \sum_{j=1}^{N_{T+1}} \eta\left(\hat{\theta}_T^{\left(j\right)}\right)
\right| \mathcal{F}_T\right] \\
& = \left(2\E\left[\prod_{t=0}^T G_t\right]\right)^2
\frac{1}{N_0^2} \sum_{j=1}^{N_{T+1}} \Var\left[\left. \eta\left(\hat{\theta}_T^{\left(j\right)}\right)
\right| \mathcal{F}_T\right]
\end{align*}
where $\hat{\theta}_T^{\left(j\right)}$ denotes $\theta_T\left(\hat{\xi}_T^{\left(j\right)}\right)$.
In the resampling step,
a series of particles $\left(\hat{\xi}_T^{\left(j\right)}\right)_{1 \leq j \leq J}$ is randomly selected
(other particles may be deterministically selected)
with $L^{\left(0\right)} \geq \hat{\theta}_T^{\left(1\right)} \geq L^{\left(1\right)} \geq \hat{\theta}_T^{\left(2\right)} \geq \cdots \geq \hat{\theta}_T^{\left(J\right)} \geq L^{\left(J\right)}$ for some
$\mathcal{F}_T$-measurable random variables $L^{\left(j\right)}$. Therefore,
\begin{align*}
& \quad \sum_{j=1}^J \Var\left[\left.\eta\left(\hat{\theta}_T^{\left(j\right)}\right)\right| \mathcal{F}_T\right]
\leq \frac{1}{4} \sum_{j=1}^J \left|\sup_{x \in \left[L^{\left(j-1\right)}, L^{\left(j\right)}\right]} \eta\left(x\right)
- \inf_{x \in \left[L^{\left(j-1\right)}, L^{\left(j\right)}\right]} \eta\left(x\right)\right|^2 \\
& \leq \sup_{x \in \mathbb{R}}\left|\eta\left(x\right)\right| \sum_{j=1}^J 
\left|\sup_{x \in \left[L^{\left(j-1\right)}, L^{\left(j\right)}\right]} \eta\left(x\right)
- \inf_{x \in \left[L^{\left(j-1\right)}, L^{\left(j\right)}\right]} \eta\left(x\right)\right|
\leq \sup_{x \in \mathbb{R}}\left|\eta\left(x\right)\right| V\left(\eta\right)
\end{align*}
where $V\left(\eta\right)$ is the total variation of $\eta$.

It remains to bound $\hat{V}_T^2\left[\eta\left(\theta_T\right) - p\left(\theta_T\right)\right]$.
On the event  $\hat{w}_T \leq 2\E\left[\prod_{t=0}^T G_t\right]$,
\begin{align*}
& \quad \hat{V}_T^2\left[\eta\left(\theta_T\right) - p\left(\theta_T\right)\right]
= \sum_{j=1}^{N_{T+1}} \frac{\hat{w}_T^2}{N_0^2}
\Var\left[\left. 
\left(\eta\left(\hat{\theta}_T^{\left(j\right)}\right) - p\left(\hat{\theta}_T^{\left(j\right)}\right)\right)\right|\mathcal{F}_T\right] \\
& \leq 2\E\left[\prod_{t=0}^T G_t\right]
\E\left[\left. \frac{\hat{w}_T}{N_0} \sum_{j=1}^{N_{T+1}} \left|\eta\left(\hat{\theta}_T^{\left(j\right)}\right) - p\left(\hat{\theta}_T^{\left(j\right)}\right)\right|^2\right|\mathcal{F}_T\right]
\end{align*}
This last term has expectation
\begin{equation*}
2\E\left[\prod_{t=0}^T G_t\right] \E \left[\prod_{t=0}^T G_t \left|\eta \left(\theta_T\right) - p\left(\theta_T\right)\right|^2\right] 
< 2 \delta \E\left[\prod_{t=0}^T G_t\right]
\end{equation*}
Conclude
\begin{align*}
& \quad \limsup_{N_0 \rightarrow \infty} \Prob\left\{N_0 \hat{V}_t^2\left[h_t\right]
> \left(1 + \epsilon\right) N_0 \hat{V}_t^2\left[h_t - \nu\right] + \epsilon \right\} \\
& = \limsup_{N_0 \rightarrow \infty} 
\Prob\left\{\hat{w}_t \leq 2\E\left[\prod_{s=0}^t G_s\right],
\, \left(2 + \frac{2}{\epsilon}\right)N_0 \hat{V}_T^2\left[\eta\left(\theta_T\right)\right] > \frac{\epsilon}{2}\right\} \\
& + \limsup_{N_0 \rightarrow \infty}
\Prob\left\{\hat{w}_t \leq 2\E\left[\prod_{s=0}^t G_s\right],
\, \left(2 + \frac{2}{\epsilon}\right) N_0 \hat{V}_T^2\left[\eta\left(\theta_T\right) - p\left(\theta_T\right)\right] > \frac{\epsilon}{2}\right\} \\
& \leq \left(\frac{2}{\epsilon}\right)\left(2 + \frac{2}{\epsilon}\right)
\left(2 \delta \E\left[\prod_{t=0}^T G_t\right]\right)
\end{align*}
For small enough $\delta$, this last term is less than $\epsilon$, proving the result.
\end{proof}

\begin{proof}[Proof of Theorem \ref{multvar}]
The proof combines Lemma \ref{CLT} with explicit computations of resampling variances $\hat{V}_t^2$.
For multinomial resampling,
%
%First assume multinomial resampling is used. 
%Then, there are $N_0$ particles $\hat{\xi}_t^{\left(j\right)}$ that are selected multinomially
%and hence satisfy
%\begin{align}
%& \quad \Var\left[\left.h_t\left(\hat{\xi}_t^{\left(j\right)}\right)\right|\mathcal{F}_t\right]
%= \E\left[\left.\left|h_t\left(\hat{\xi}_t^{\left(j\right)}\right)\right|^2\right|\mathcal{F}_t\right]
%- \left(\E\left[\left.h_t\left(\hat{\xi}_t^{\left(j\right)}\right)\right|\mathcal{F}_t\right]\right)^2 \\
%& = \sum_{i=1}^{N_t} \left(\frac{w_t^{\left(i\right)}}{N_0\hat{w}_t}\right) \left|h_t\left(\xi_t^{\left(i\right)}\right)\right|^2
%- \left| \sum_{i=1}^{N_t} \left(\frac{w_t^{\left(i\right)}}{N_0\hat{w}_t}\right) h_t\left(\xi_t^{\left(i\right)}\right)\right|^2
%\end{align}
%and therefore
\begin{equation*}
N_0 \hat{V}_t^2\left[h_t\right] = \frac{\overline{w}_t}{N_0} \sum_{i=1}^{N_t} w_t^{\left(i\right)} 
\left|h_t\left(\xi_t^{\left(i\right)}\right)\right|^2
- \left|\frac{1}{N_0} \sum_{i=1}^{N_t} w_t^{\left(i\right)} 
h_t\left(\xi_t^{\left(i\right)}\right)\right|^2
\end{equation*}
By Theorem \ref{weak}, therefore,
\begin{multline*}
N_0 \hat{V}_t^2\left[h_t\right] \stackrel{\Prob}{\rightarrow} \E\left[\prod_{s=0}^t G_s\right] \E\left[\prod_{s=0}^t G_s h_t^2\right]
- \left(\E\left[\prod_{s=0}^t G_s h_t \right]\right)^2 \\
= \E\left[\prod_{s=0}^t G_s\right] \E\left[\prod_{s=0}^t G_s 
\left|h_t - \frac{\E\left[\prod_{s=0}^t G_s h_t\right]}{\E\left[\prod_{s=0}^t G_s\right]}\right|^2\right]
\end{multline*}
%When multinomial residual resampling is used,
%many particles are selected deterministically.
%There are $R_t$ particles $\hat{\xi}_t^{\left(j\right)}$ that are are selected multinomially and hence satisfy
%\begin{equation}
%\Var\left[\left.h_t\left(\hat{\xi}_t^{\left(j\right)}\right)\right|\mathcal{F}_t\right] \\
%= \frac{1}{R_t} \sum_{i=1}^{N_t} \left\{\frac{w_t^{\left(i\right)}}{\hat{w}_t}\right\}
%\left|h_t\left(\xi_t^{\left(i\right)}\right)\right|^2
%- \left| \frac{1}{R_t} \sum_{i=1}^{N_t} \left\{\frac{w_t^{\left(i\right)}}{\hat{w}_t}\right\} 
%h_t\left(\xi_t^{\left(i\right)}\right)\right|^2
%\end{equation}
%Therefore, using the fact that $R_t = \sum_{i=1}^{N_t} \left\{\frac{w_t^{\left(i\right)}}{\hat{w}_t}\right\}$,
%$N_0 \hat{V}_t^2\left[h_t\right]$ can be written
For multinomial residual resampling, $N_0 \hat{V}_t^2\left[h_t\right]$ takes the form
\begin{equation*}
\frac{\overline{w}_t^2}{\overline{w}_{t-1}} \left(
\frac{\overline{w}_{t-1}}{N_0} \sum_{i=1}^{N_t} \left\{\frac{w_t^{\left(i\right)}}{\overline{w}_t}\right\}
\left|h_t\left(\xi_t^{\left(i\right)}\right)\right|^2
- \frac{
\left|\frac{\overline{w}_{t-1}}{N_0} \sum_{i=1}^{N_t} \left\{\frac{w_t^{\left(i\right)}}{\overline{w}_t}\right\}
h_t\left(\xi_t^{\left(i\right)}\right)\right|^2}
{\frac{\overline{w}_{t-1}}{N_0}\sum_{i=1}^{N_t} \left\{\frac{w_t^{\left(i\right)}}{\overline{w}_t}\right\}}\right)
\end{equation*}
By Theorem \ref{weak} and Lemma \ref{technical1}, $N_0 \hat{V}_t^2\left[h_t\right]$ converges in probability to
%\begin{multline*}
%\frac{\left(\E\left[\prod_{s=0}^t G_s\right]\right)^2}{\E\left[\prod_{s=0}^{t-1} G_s\right]}
%\left(\E\left[\prod_{s=0}^{t-1} G_s \left\{\tilde{G}_t\right\} h_t^2\right]
%- \frac{\left(\E\left[\prod_{s=0}^{t-1} G_s \left\{\tilde{G}_t\right\} h_t\right]\right)^2}
%{\E\left[\prod_{s=0}^{t-1} G_s \left\{\tilde{G}_t\right\}\right]}\right) \\
%= \frac{\left(\E\left[\prod_{s=0}^t G_s\right]\right)^2}{\E\left[\prod_{s=0}^{t-1} G_s\right]}
%\E\left[\prod_{s=0}^{t-1} G_s \left\{\tilde{G}_t\right\} \left|h_t
%- \frac{\E\left[\prod_{s=0}^{t-1} G_s \left\{\tilde{G}_t\right\} h_t\right]}
%{\E\left[\prod_{s=0}^{t-1} G_s \left\{\tilde{G}_t\right\}\right]}\right|^2\right]
%\end{multline*}
\begin{equation*}
%\frac{\left(\E\left[\prod_{s=0}^t G_s\right]\right)^2}{\E\left[\prod_{s=0}^{t-1} G_s\right]}
%\left(\E\left[\prod_{s=0}^{t-1} G_s \left\{\tilde{G}_t\right\} h_t^2\right]
%- \frac{\left(\E\left[\prod_{s=0}^{t-1} G_s \left\{\tilde{G}_t\right\} h_t\right]\right)^2}
%{\E\left[\prod_{s=0}^{t-1} G_s \left\{\tilde{G}_t\right\}\right]}\right) \\
\frac{\left(\E\left[\prod_{s=0}^t G_s\right]\right)^2}{\E\left[\prod_{s=0}^{t-1} G_s\right]}
\E\left[\prod_{s=0}^{t-1} G_s \left\{\tilde{G}_t\right\} \left|h_t
- \frac{\E\left[\prod_{s=0}^{t-1} G_s \left\{\tilde{G}_t\right\} h_t\right]}
{\E\left[\prod_{s=0}^{t-1} G_s \left\{\tilde{G}_t\right\}\right]}\right|^2\right]
\end{equation*}
For Bernoulli resampling, Theorem \ref{weak} and Lemma \ref{technical1} give
%When Bernoulli resampling is used, each particle $\xi_t^{\left(i\right)}$ is replicated $N_t^{\left(i\right)}$ times,
%where the numbers $N_t^{\left(i\right)}$ are conditionally independent; 
%hence, $N_0 \hat{V}_t^2\left[h_t\right]$ can be written
%\begin{align}
%& \quad \frac{\hat{w}_t^2}{N_0} \Var\left[\left.\sum_{i=1}^{N_t} N_t^{\left(i\right)} h_t\left(\xi_t^{\left(i\right)}\right)\right|\mathcal{F}_t\right]
%= \frac{\hat{w}_t^2}{N_0} \sum_{i=1}^{N_t} \Var\left[\left. N_t^{\left(i\right)} \right|\mathcal{F}_t\right]
%\left|h_t\left(\xi_t^{\left(i\right)}\right)\right|^2 \\
%& = \frac{\hat{w}_t^2}{N_0} \sum_{i=1}^{N_t} \left\{\frac{w_t^{\left(i\right)}}{\hat{w}_t}\right\}
%\left(1 - \left\{\frac{w_t^{\left(i\right)}}{\hat{w}_t}\right\}\right)
%\left|h_t\left(\xi_t^{\left(i\right)}\right)\right|^2
%\end{align}
\begin{align*}
N_0 \hat{V}_t^2\left[h_t\right] & = \frac{\overline{w}_t^2}{N_0} \sum_{i=1}^{N_t} 
\left\{\frac{w_t^{\left(i\right)}}{\overline{w}_t}\right\}
\left(1 - \left\{\frac{w_t^{\left(i\right)}}{\overline{w}_t}\right\}\right)
\left|h_t\left(\xi_t^{\left(i\right)}\right)\right|^2 \\
& \stackrel{\Prob}{\rightarrow}
\frac{\left(\E\left[\prod_{s=0}^t G_s\right]\right)^2}{\E\left[\prod_{s=0}^{t-1} G_s\right]}
\E\left[\prod_{s=0}^{t-1} G_s \left\{\tilde{G}_t\right\}\left(1 - \left\{\tilde{G}_t\right\}\right) h_t^2\right]
\end{align*}
To compute the resampling variance for stratified resampling, consider the function
$p$ that minimizes
$\E\left[\prod_{s=0}^t \tilde{G}_s \left|h_t - p\left(\theta_t\right)\right|^2\right]$.
Since this function can be written as an $L^2$ projection, it is well-defined.
%When sorted stratified resampling is used,
%first observe that there is a function $h_t^{\theta}$ that achieves the minimum value in
%\begin{equation}
%\min_{h_t^{\theta}\colon \mathbb{R} \rightarrow \mathbb{R}}
%\E \left[\prod_{s=0}^t \tilde{G}_s \left|h_t - h_t^{\theta} \left(\theta_t\right)\right|^2\right]
%\end{equation}
%Let $Y$ denote a random variable with
%$\E\left[f\left(Y\right)\right] = \E\left[\prod_{s=0}^t \tilde{G}_s f\left(X_t\right)\right]$
%for bounded functions $f$.
%The minimum value is achieved by
%$h_t^{\theta} = \E\left[\left.h_t\left(Y\right)\right|\theta_t\left(Y\right)\right]$.
Moreover, by Lemma \ref{simplelemma}, 
the resampling variance $N_0\hat{V}_t^2\left[h_t - p\left(\theta_t\right)\right]$
is bounded by the multinomial resampling variance,
which converges in probability to 
\begin{equation*}
\hat{\eta}_t^2\left[h_t\right] = \left(E\left[\prod_{s=0}^t G_s\right]\right)^2
\E\left[\prod_{s=0}^t \tilde{G}_s \left|h_t - p\left(\theta_t\right)\right|^2\right]
\end{equation*}
Thus, $\Prob\left\{N_0\hat{V}_t^2\left[h_t - p\left(\theta_t\right)\right] > \hat{\eta}_t^2\left[h_t\right] + \epsilon\right\} \rightarrow 0$ for all $\epsilon > 0$.
By Lemma \ref{technical2}, this is enough to guarantee
$\Prob\left\{N_0 \hat{V}_t^2\left[h_t\right] > \hat{\eta}_t^2\left[h_t\right] + \epsilon\right\} \rightarrow 0$ for all $\epsilon > 0$.
The asymptotic variance upper bound for sorted stratified residual resampling is proved similarly.
\end{proof}

% ----------------------------------------------------------------------------------------------------------------

\section*{Acknowledgements}
The author would like to thank Jonathan Weare and Omiros Papaspiliopoulos
for conversations that helped shape the presentation of results 
and Alicia Zhao for gracious and patient
editorial assistance.

% ---------------------------------------------------------------------------------------------------------------

% ------------------------------------------------------------------------------------------------------------------

\end{document}